\newcommand{\Z}{\mathbb{Z}}
\newcommand{\R}{\mathbb{R}}
\newcommand{\C}{\mathbb{C}}
\newcommand{\Q}{\mathbb{Q}}
\newcommand{\T}{\mathbb{T}}
\newcommand{\F}{\mathbb{F}}
\newcommand{\gerg}{\mathfrak{g}}
\newcommand{\gersl}{\mathfrak{sl}}
\newcommand{\gerS}{\mathfrak{S}}
\newcommand{\germ}{\mathfrak{m}}
\newcommand{\gerM}{\mathfrak{M}}
\newcommand{\calY}{\mathcal{Y}}
\newcommand{\calX}{\mathcal{X}}
\newcommand{\calO}{\mathcal{O}}
\newcommand{\calA}{\mathcal{A}}
\newcommand{\calB}{\mathcal{B}}
\newcommand{\calT}{\mathcal{T}}
\newcommand{\calH}{\mathcal{H}}
\newcommand{\om}{\omega}
\newcommand{\Om}{\Omega}
\newcommand{\ga}{\gamma}
\newcommand\vep{\varepsilon}
\newcommand\vphi{\varphi}
\newcommand{\Cstar}{\C^*}
\newcommand\Hom{\mathop{\mathrm{Hom}}\nolimits}
\newcommand\Aut{\mathop{\mathrm{Aut}}\nolimits}
\newcommand\codim{\mathop{\mathrm{codim}}\nolimits}
\newcommand\Coker{\mathop{\mathrm{Coker}}\nolimits}
\newcommand\diag{\mathop{\mathrm{diag}}\nolimits}
\newcommand\Image{\mathop{\mathrm{Im}}\nolimits}
\newcommand\Ker{\mathop{\mathrm{Ker}}\nolimits}
\newcommand\Lie{\mathop{\mathrm{Lie}}\nolimits}
\newcommand\Mat{\mathop{\mathrm{Mat}}\nolimits}
\newcommand\tmin{\mathop{\mathrm{min}}\nolimits}
\newcommand\Pic{\mathop{\mathrm{Pic}}\nolimits}
\newcommand\Proj{\mathop{\mathrm{Proj}}\nolimits}
\newcommand\rank{\mathop{\mathrm{rank}}\nolimits}
\newcommand\Sing{\mathop{\mathrm{Sing}}\nolimits}
\newcommand\Span{\mathop{\mathrm{Span}}\nolimits}
\newcommand\Spec{\mathop{\mathrm{Spec}}\nolimits}
\newcommand\Stab{\mathop{\mathrm{Stab}}\nolimits}
\def\fr<#1/#2>{\frac{#1} {#2}}
\newcommand{\block}[1]{\underbrace{\begin{matrix}1 & \cdots & 1\end{matrix}}_{#1}}
\newcommand{\LargeO}{\text{{\huge{0}}}}
\newcommand{\vncbig}[1]{\multicolumn{2}{c}{$\mbox{{\Huge $#1$}}$}}
\newcommand{\vbig}[1]{\multicolumn{1}{c}{$\mbox{\smash{\Huge $#1$}}$}}
\newcommand{\vcbig}[1]{\multicolumn{2}{c}{$\mbox{\smash{\Huge $#1$}}$}}
\newcommand{\xrightarrowdbl}[2][]{%
  \xrightarrow[#1]{#2}\mathrel{\mkern-14mu}\rightarrow
}
\newtheorem{thm}{Dont use this}[section]
\newtheorem{theorem}[thm]{Theorem}
\newtheorem{proposition}[thm]{Proposition}
\newtheorem{corollary}[thm]{Corollary}
\newtheorem{lemma}[thm]{Lemma}
\newtheorem{question}[thm]{Question}
\newtheorem*{definition*}{Definition}
\newtheorem{definition and lemma}[thm]{Definition $\&$ Lemma}
\newtheorem*{remark*}{Remark}
\theoremstyle{definition}
\newtheorem{definition}[thm]{Definition}
\newtheorem{example}[thm]{Example}
\newtheorem{remark}[thm]{Remark}
\theoremstyle{definition}
\title{%
The universal Poisson deformation of hypertoric varieties and some classification results
}
\author{%
Takahiro Nagaoka%
}
\address[Takahiro Nagaoka]{Department of Mathematics,
	Graduate School of Science,
	Kyoto University,
	Kyoto 606-8502, Japan}
\email{tnagaoka@math.kyoto-u.ac.jp}
\subjclass[2010]{14B07, 14E15, 14M25, 52B40, 52C35}
\date{}
\keywords{hypertoric variety, universal Poisson deformation, classification, counting crepant resolutions}
\begin{document}
\pagestyle{plain}
\begin{abstract}
In this paper, we study and describe the universal Poisson deformation space of hypertoric varieties concretely. In the first application, we show that affine hypertoric varieties as conical symplectic varieties are classified by the associated regular matroids (this is a partial generalization of the result by Arbo and Proudfoot). As a corollary, we obtain a criterion when two quiver varieties whose dimension vector have all coordinates equal to one are isomorphic to each other. Then we describe all 4- and 6-dimensional affine hypertoric varieties as quiver varieties and give some examples of 8-dimensional hypertoric varieties which cannot be raised as such quiver varieties.  In the second application, we compute explicitly the number of all projective crepant resolutions of some 4-dimensional hypertoric varieties by using the combinatorics of hyperplane arrangements.   
\end{abstract} 
\maketitle
\vspace{-33pt}
\section{Introduction}
\begin{spacing}{1}
Hypertoric varieties (or toric hyperk\"{a}hler varieties) were defined as the hyperk\"{a}hler analogues of toric varieties in \cite{BD}, and their topological and geometric properties have been extensively studied by many authors (\cite{HS}, \cite{Kocohomology}, \cite{Kovariation}, \cite{PW}, etc). An affine hypertoric variety has a symplectic form on the smooth locus. Moreover, it is a conical symplectic variety in the sense that it has a good $\C^*$-action for which the symplectic form is homogeneous. 
A conical symplectic variety and its symplectic resolution $\pi: (Y, \om)\to (Y_0, \overline{\om}_0)$ (i.e., $\pi^*\overline{\om}_0$ extends to a symplectic form $\om$ on the whole $Y$, and this is equivalent to be a crepant resolution) also plays an important role in geometric representation theory (\cite{BPW1}, \cite{BLPW2}). 

A symplectic variety has a natural Poisson structure. A Poisson deformation of a symplectic variety is a deformation of the pair of the variety itself and its Poisson structure.
Also, among such  deformations, there is a universal one
(for the detail, see section 3). For a conical symplectic variety $Y_0$ and its symplectic resolution $\pi : Y\to Y_0$, Namikawa showed that there exists a universal Poisson deformation $\calY$ (resp. $\calY_0$) of $Y$ (resp. $Y_0$), and these spaces satisfy the following $\Cstar$-equivariant commutative diagram: 
\begin{equation}\tag{*}\setlength{\abovedisplayskip}{-1pt} 
\begin{tikzcd}[row sep=tiny, column sep=tiny, contains/.style = {draw=none,"\in" description,sloped}, icontains/.style = {draw=none,"\ni" description,sloped}]
&Y\ar[dl, hook']\ar[rr, "\pi"]\ar[dd]&&Y_0\ar[dl, hook'] \ar[dd]\\
\calY\ar[rr, crossing over, "\Pi" near end]\ar[dd, "\overline{\mu}"]&&\calY_0&\\
&0\ar[dl, icontains]\ar[rr, mapsto]&&\overline{0}\ar[dl, icontains]\\
H^2(Y, \C)\ar[rr, "\psi"]&&H^2(Y, \C)/W \ar[from=uu, crossing over, "\overline{\mu}_W" near start]&
\end{tikzcd},\setlength{\belowdisplayskip}{-1pt} 
\end{equation}
where $\psi$ is the Galois cover by a finite group $W$. We call $W$ the Namikawa--Weyl group of $Y_0$. 
The diagram above is determined concretely in the cases of nilpotent orbit closures (\cite{Namdefnil}), Slodowy slices (\cite{LNS}), and quotient singularities (\cite{Bel}).    
Furthermore, it is known that $W$ is closely related to the birational geometry of $Y_0$ (\cite{NamPDbir}). 

In the former part of this paper we give an explicit description of the diagram above for hypertoric varieties. Also, in the latter part we apply this explicit description to classify affine hypertoric varieties and to count the number of different crepant resolutions of affine hypertoric varieties.  
    
\vspace{2pt}

Now we set up notations to state our results precisely. 
Let $A\hspace{-2pt}=\hspace{-2pt}[\bm{a_1}, \ldots, \bm{a_n}]$ be a rank $d$ unimodular matrix and take $B^T\hspace{-2pt}=\hspace{-2pt}[\bm{b_1}, \ldots ,\bm{b_n}]$ so that  the following is exact: 
\begin{equation}\tag{**}\setlength{\abovedisplayskip}{0pt}
\begin{tikzcd}
0\ar[r]&\Z^{n-d}\ar[r, "{B}"] &\Z^{n}\ar[r, "{A}"]&\Z^d\ar[r]&0         
\end{tikzcd}.\setlength{\belowdisplayskip}{0pt}
\end{equation}
Then by using the natural Hamiltonian $\T_\C^d$-action on $\C^{2n}$ induced from  $A^T: \T_\C^d\hookrightarrow \T_\C^n$, we define a hypertoric variety $Y(A, \alpha):=\mu^{-1}(0)/\hspace{-3pt}/_\alpha\T_\C^d$ as the GIT quotient, where $\alpha\in\Z^d$ is a GIT parameter, and $\mu: \C^{2n}\to\C^d$ is the $\T_\C^d$-invariant moment map. 
By definition, we have a natural projective morphism $\pi:Y(A, \alpha)\to Y(A, 0)$. Then, $Y(A, 0)$ is a conical symplectic variety, and for generic $\alpha$, $\pi$ is a projective symplectic resolution. 

In section 3, we construct the diagram (*) for $\pi: Y(A, \alpha)\to Y(A, 0)$. First, we consider the Poisson variety $X(A, \alpha):=\C^{2n}/\hspace{-3pt}/_\alpha\T_\C^d$ (called the Lawrence toric variety) and a natural projective morphism $\Pi: X(A, \alpha) \to X(A, 0)$. Then, the induced map $\overline{\mu}_\alpha: X(A, \alpha)\to\C^d$ gives a Poisson deformation of $Y(A, \alpha)$ and satisfy the following $\Cstar$-equivariant diagram: 
\[\setlength{\abovedisplayskip}{0pt}
\begin{tikzcd}[row sep=tiny, column sep=tiny, contains/.style = {draw=none,"\in" description,sloped}, icontains/.style = {draw=none,"\ni" description,sloped}]
&Y(A, \alpha)\ar[dl, hook']\ar[rr, "\pi"]\ar[dd]&&Y(A, 0)\ar[dl, hook'] \ar[dd]\\
X(A, \alpha)\ar[rr, crossing over, "\Pi" near end]\ar[dd, "\overline{\mu}_\alpha"]&&X(A, 0)&\\
&0\ar[dl, contains]\ar[rr, mapsto]&&0\ar[dl, contains]\\
\C^d\ar[rr, equal]\ar[ur, icontains]&&\C^d \ar[from=uu, crossing over, "\overline{\mu}_0"near start]\ar[ur, icontains]&
\end{tikzcd}\setlength{\belowdisplayskip}{0pt}.\] 
Then, \hspace{-1pt}as\hspace{-1pt} is \hspace{-1pt}well-known \hspace{-1pt}(but \hspace{-1pt}the \hspace{-1pt}proof \hspace{-1pt}doesn't \hspace{-1pt}appear \hspace{-1pt}explicitly \hspace{-1pt}in \hspace{-1pt}literature), \hspace{-1pt}$\overline{\mu}_\alpha:X(A, \alpha) \hspace{-2pt}\to\hspace{-2pt} \C^d$ \hspace{-1pt}is \hspace{-1pt}the \hspace{-1pt}universal \hspace{-1pt}Poisson \hspace{-1pt}deformation \hspace{-1pt}of $Y(A, \alpha)$ (see \hspace{-1pt}Proposition \ref{prop:UPDforsmooth}). 

Next, to describe the universal Poisson deformation of $Y(A, 0)$, we determine the action of the Namikawa--Weyl group $W$ explicitly. We assume that $B$ is in the form of \renewcommand{\arraystretch}{0.5}\arraycolsep=1.4pt $B^T=[\bm{b^{(1)}} \cdots \bm{b^{(1)}} \cdots \bm{b^{(s)}}\cdots \bm{b^{(s)}}]$, where $\bm{b^{(k_1)}}\neq\pm\bm{b^{(k_2)}}$ if $k_1\neq k_2$ (we can always assume this). 
\renewcommand{\arraystretch}{1}\arraycolsep=2pt
Then, we prove that the Namikawa--Weyl group of $Y(A, 0)$ is $W_B:=\gerS_{\ell_1}\times\cdots\times\gerS_{\ell_s}$, where $\ell_i$ is the multiplicity of $\bm{b^{(i)}}$. Moreover, by constructing a suitable $W_B$-action (cf.\ Proposition \ref{prop:HT-NWgrp}) on $X(A, 0)$ and $\C^d$ such that $\overline{\mu}_0:X(A, 0)\to\C^d$ is $W_B$-equivariant, we describe the universal Poisson deformation space of $Y(A, 0)$ as the following.  
This result was mentioned in \cite{BPW1} without proof. 
\begin{theorem}{\rm (cf.\ Theorem \ref{thm:Mainthm})}\label{thm:iMainthm}
In the setting above, if each row $\bm{b^{(i)}}$ of $B$ is not a zero vector, the diagram (*) for $\pi : Y(A, \alpha) \to Y(A, 0)$ will be the following:  
\[\setlength{\abovedisplayskip}{-1pt}
\begin{tikzcd}[row sep=tiny, column sep=tiny, contains/.style = {draw=none,"\in" description,sloped}, icontains/.style = {draw=none,"\ni" description,sloped}]
&Y(A, \alpha)\ar[dl, hook']\ar[rr, "\pi"]\ar[dd]&&Y(A, 0)\ar[dl, hook'] \ar[dd]\\
X(A, \alpha)\ar[rr, crossing over, "\Pi_{W_B}" near end]\ar[dd, "\overline{\mu}_\alpha"]&&X(A, 0)/W_B&\\
&0\ar[rr, mapsto]&&\overline{0}\\
\C^d\ar[rr, "\psi"]\ar[ur, icontains]&&\C^d/W_B\ar[ur, icontains] \ar[from=uu, crossing over, "\overline{\mu}_{W_B}" near start]&
\end{tikzcd},\setlength{\belowdisplayskip}{-1pt}\]
where $\Pi_{W_B}$ is the composition of $\Pi$ and the quotient map of $X(A, 0)$ by $W_B$. 
\end{theorem}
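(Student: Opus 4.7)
The plan is to reduce the statement to Namikawa's general result: for a conical symplectic variety $Y_0$ with a projective symplectic resolution $\pi:Y\to Y_0$, the universal Poisson deformation $\calY_0\to H^2(Y,\C)/W$ is obtained from the universal Poisson deformation $\calY\to H^2(Y,\C)$ of $Y$ by taking the quotient by the Namikawa--Weyl group $W$, via the Galois cover $\psi:H^2(Y,\C)\to H^2(Y,\C)/W$. Thus, once I know that the top row $\overline{\mu}_\alpha:X(A,\alpha)\to\C^d$ really is the universal Poisson deformation of $Y(A,\alpha)$ (Proposition~\ref{prop:UPDforsmooth}) and once I have identified both the group $W$ and a compatible $W$-action on $X(A,\alpha)$ and $\C^d$, the lower half of the diagram will be forced by taking quotients, and the base change $\C^d\to\C^d/W_B$ will identify the top row with the pullback of the bottom row.

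The first task is therefore to identify the Namikawa--Weyl group $W$ of $Y(A,0)$. I would compute it from the transverse geometry of the codimension-$2$ symplectic leaves of $Y(A,0)$. Using the explicit hyperplane-arrangement description of affine hypertoric varieties, each codimension-$2$ stratum corresponds to a row $\bm{b^{(i)}}$ of $B$ appearing with multiplicity $\ell_i$, and the transverse slice is the Kleinian singularity $\C^2/(\Z/\ell_i)$ of type $A_{\ell_i-1}$. The assumption that no $\bm{b^{(i)}}$ is zero ensures that every such block of repeated rows of $B$ really produces a codimension-$2$ leaf, and by Namikawa's recipe $W$ is then the product of the Weyl groups of these ADE types (there are no outer contributions because all transverse types are of type $A$ and the monodromy on the set of $(-2)$-curves in each slice is trivial in our toric situation). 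This gives $W=W_B=\gerS_{\ell_1}\times\cdots\times\gerS_{\ell_s}$.

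The second task is to exhibit a concrete $W_B$-action realizing this abstract identification. Here I would invoke Proposition~\ref{prop:HT-NWgrp}: on the Lawrence toric variety $X(A,\alpha)$ the factor $\gerS_{\ell_i}$ permutes the pairs of linear coordinates corresponding to the $\ell_i$ equal copies of $\bm{b^{(i)}}$, which makes sense because repeated rows give interchangeable $\T_\C^d$-weights. The same permutation on the target $\C^d\cong\Lie\T_\C^d$ is induced from the coordinate permutation on $\C^n$ via the map $A$, and by construction $\overline{\mu}_\alpha$ is $W_B$-equivariant. A direct verification shows that $\pi: Y(A,\alpha)\to Y(A,0)$ factors through the $W_B$-quotient, so $\Pi_{W_B}:=(\text{quot})\circ\Pi$ is well-defined, and all arrows are $\Cstar$-equivariant because the $W_B$-action commutes with the conical $\Cstar$-action.

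Having the equivariant diagram in place, the universality statement is obtained as follows. Since $\overline{\mu}_\alpha$ is the universal Poisson deformation of $Y(A,\alpha)$ and $\psi:\C^d\to\C^d/W_B$ is a finite Galois cover with group $W_B$, the categorical quotient $X(A,\alpha)/W_B\to \C^d/W_B$ is a flat Poisson deformation of $Y(A,0)=Y(A,\alpha)/W_B\hspace{-2pt}\mid_{\text{fiber}}$ (more precisely, the central fiber is $Y(A,0)$ because $\Pi_{W_B}^{-1}(\overline{0})=Y(A,0)$, as one checks from the explicit equations). To identify $X(A,\alpha)/W_B=X(A,0)$ I would compare GIT quotients: the categorical quotient of $\C^{2n}/\hspace{-2pt}/_\alpha\T_\C^d$ by $W_B$ is $\C^{2n}/\hspace{-2pt}/_0\T_\C^d=X(A,0)$, since $W_B$ kills the $\alpha$-linearization. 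The Kodaira--Spencer map of $X(A,\alpha)/W_B\to\C^d/W_B$ at the origin is then identified, after pulling back along $\psi$, with that of $\overline{\mu}_\alpha$; since the latter is an isomorphism onto $H^2(Y(A,\alpha),\C)$, universality transfers to the quotient by Namikawa's criterion.

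The main obstacle I anticipate is the clean identification of the Namikawa--Weyl group with $W_B$ and the verification that no additional outer automorphisms appear; this requires a careful analysis of the transverse slices at codimension-$2$ leaves and of how the monodromy around these leaves acts on the set of exceptional components in $Y(A,\alpha)$. Once this is pinned down, the rest is a matter of assembling Proposition~\ref{prop:UPDforsmooth}, Proposition~\ref{prop:HT-NWgrp}, and Namikawa's quotient theorem.
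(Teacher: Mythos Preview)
Your proposal contains a genuine structural error in the third and fourth paragraphs. The $W_B$-action in Proposition~\ref{prop:HT-NWgrp} is constructed on $X(A,0)$, \emph{not} on $X(A,\alpha)$. Indeed, as the paper remarks, the $W_B$-action on $\C^{2n}$ does not commute with the $\T_\C^d$-action, so it has no reason to descend to the GIT quotient at a nonzero stability parameter $\alpha$; it descends to $X(A,0)$ only because the invariant ring $\C[\bm z,\bm w]^{\T_\C^d}$ happens to be $W_B$-stable (this is checked directly from the description in Lemma~\ref{ring str lem}). Consequently your assertion $X(A,\alpha)/W_B=X(A,0)$ is false, and the justification ``$W_B$ kills the $\alpha$-linearization'' is not meaningful. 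The correct picture is that the universal Poisson deformation of $Y(A,0)$ is $X(A,0)/W_B\to\C^d/W_B$, and $\Pi_{W_B}$ is defined as the composite $X(A,\alpha)\xrightarrow{\Pi}X(A,0)\to X(A,0)/W_B$; there is no quotient taken upstairs. Relatedly, $Y(A,0)$ is not $Y(A,\alpha)/W_B$ in any sense---$\pi$ is a resolution, not a finite quotient map.

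There is a second gap in your identification of $W$. You assert $W=W_B$ because ``the monodromy on the set of $(-2)$-curves in each slice is trivial in our toric situation,'' but type $A_{\ell-1}$ for $\ell\geq 3$ has a nontrivial Dynkin automorphism, so triviality of the monodromy is exactly what needs proof, and you give none. The paper avoids computing the monodromy altogether: Corollary~\ref{cor:codim2strata} only yields the inclusion $W\subseteq W_B$, and the reverse inclusion is obtained by the abstract argument of Lemma~\ref{Wactionlem}. Namely, once one has a $W_B$-action on $X(A,0)$ and $\C^d$ that is linear, effective, Poisson, $\Cstar$-equivariant, makes $\overline\mu_0$ equivariant, and is trivial on $Y(A,0)$, then universality of $\calX\to\C^d/W$ produces (after completion and algebraization) a factorization $\C^d\to\C^d/W_B\to\C^d/W$, which forces $W_B\subseteq W$ inside $GL(\C^d)$. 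This is the actual mechanism by which the paper establishes both $W=W_B$ and the universality of $X(A,0)/W_B\to\C^d/W_B$ simultaneously.
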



In section 4, as applications of Theorem \ref{thm:iMainthm}, we will {prove} some classification results on affine hypertoric varieties. First, from the work in \cite{AP}, it is easily deduced that the $\Cstar\times\T_\C^{n-d}$-equivariant isomorphism classes of affine hypertoric varieties ({as Poisson varieties}) are bijective to the isomorphism classes of the regular matroids, {$\T_\C^{n-d}$-action is the remaining hamiltonian torus $\T_\C^n/\T_\C^d$-action.} Then, our result is the following. 

\begin{theorem}\label{thm:introclassification}{\rm (cf.\ Theorem \ref{thm:classification})}
Let $A$ and $A'$ be the same size unimodular matrices. Then the following are equivalent. 
\begin{itemize}
\item[\hspace{-5pt}(i)]$Y(A, 0)$ is isomorphic to $Y(A', 0)$ as conical symplectic varieties. 
\item[\hspace{-5pt}(ii)]$Y(A, 0)$ is $\Cstar\hspace{-1pt}\times\hspace{-1pt}\T_\C^{n-d}$-equivariant isomorphic to $Y(A', 0)$ as Poisson varieties. 
\end{itemize} 
In particular, the isomorphism classes of affine hypertoric varieties as conical symplectic varieties are bijective to the isomorphism classes of the regular matroids.
\end{theorem}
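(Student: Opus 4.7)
Direction (ii) $\Rightarrow$ (i) is immediate, since a $\Cstar\times\T_\C^{n-d}$-equivariant Poisson isomorphism is in particular an isomorphism of conical symplectic varieties. All of the content lies in (i) $\Rightarrow$ (ii), and this is where my plan concentrates.

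The strategy is to leverage the explicit description of the universal Poisson deformation given by Theorem \ref{thm:iMainthm}. Given an isomorphism $\varphi\colon Y(A,0)\xrightarrow{\sim}Y(A',0)$ of conical symplectic varieties, Namikawa's functoriality for universal Poisson deformations produces a compatible isomorphism of the universal deformation spaces. By Theorem \ref{thm:iMainthm} this is realised as an isomorphism $X(A,0)/W_B\xrightarrow{\sim}X(A',0)/W_{B'}$ covering a linear identification $\C^d/W_B\xrightarrow{\sim}\C^d/W_{B'}$ of the bases. In particular $W_B$ and $W_{B'}$ must be conjugate as subgroups of $\mathrm{GL}(\C^d)$; choosing a compatible lift of the base map to the Galois covers, I expect to obtain a $\Cstar$-equivariant Poisson isomorphism $\Phi\colon X(A,0)\xrightarrow{\sim}X(A',0)$ intertwining the two Weyl group actions. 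Restricting $\Phi$ to central fibres then yields a $\Cstar$-equivariant Poisson isomorphism $\Phi_0\colon Y(A,0)\xrightarrow{\sim}Y(A',0)$, with $\Cstar$-equivariance automatic because the conical $\Cstar$-action on a Lawrence toric variety is characterised by the induced scaling on the base $\C^d$.

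The remaining task is to upgrade $\Phi_0$ to a $\T_\C^{n-d}$-equivariant map. Here I would argue that the hamiltonian $\T_\C^{n-d}=\T_\C^n/\T_\C^d$-action on $Y(A,0)$ can be recovered intrinsically---for instance, as (the identity component of) a maximal torus in the group of Hamiltonian Poisson automorphisms of $Y(A,0)$ commuting with the conical $\Cstar$-action and scaling the symplectic form with the correct weight. Then $\Phi_0$ must send this maximal torus to its counterpart on $Y(A',0)$ up to an automorphism of the torus, and after composing $\Phi_0$ with a suitable torus element the resulting map is genuinely $\Cstar\times\T_\C^{n-d}$-equivariant, which produces the isomorphism required in (ii).

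The main obstacle I anticipate is precisely this last step: Namikawa's universality argument is insensitive to the hamiltonian torus action, so torus-equivariance must be extracted from intrinsic data rather than from the lift $\Phi$ itself. A clean execution may require some explicit toric-geometric analysis of the Lawrence variety $X(A,0)=\C^{2n}/\hspace{-3pt}/_0\T_\C^d$ to identify the full $\T_\C^n$-symmetry and to show that $\Phi$ can be arranged to be $\T_\C^n$-equivariant after a twist. Once (i) $\Leftrightarrow$ (ii) is secured, the ``in particular'' statement follows by composing this equivalence with the bijection of Arbo--Proudfoot \cite{AP} between $\Cstar\times\T_\C^{n-d}$-equivariant Poisson isomorphism classes of affine hypertoric varieties and isomorphism classes of the associated regular matroids.
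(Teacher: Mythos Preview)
Your first half is exactly right and matches the paper: from an isomorphism $Y(A,0)\cong Y(A',0)$ of conical symplectic varieties, universality of the Poisson deformation (Theorem~\ref{thm:Mainthm}) gives a $\Cstar$-equivariant Poisson isomorphism $X(A,0)/W_B\cong X(A',0)/W_{B'}$ over $\C^d/W_B\cong\C^d/W_{B'}$, and pulling back along $\C^d\to\C^d/W_B$ yields a $\Cstar$-equivariant Poisson isomorphism $\Phi\colon X(A,0)\xrightarrow{\sim}X(A',0)$ over $\C^d$.

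The gap is precisely where you flag it, and your proposed fix does not close it. Your argument that $\T_\C^{n-d}$ is recoverable as a maximal torus in the group of $\Cstar$-equivariant Hamiltonian automorphisms of $Y(A,0)$ is heuristic: you would need to prove that such a maximal torus exists, is unique up to conjugacy, and has the right dimension---none of which is obvious for a singular Poisson variety, and the paper does not attempt anything like this. More seriously, even granting that $\Phi_0$ conjugates one maximal torus to another, the resulting equivariance is only up to an \emph{unknown} automorphism of $\T_\C^{n-d}$, which is exactly the ambiguity the theorem needs to eliminate.

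The paper's approach stays at the level of the Lawrence variety $X(A,0)$, which is an affine \emph{toric} variety for the big torus $\T_\C^{2n-d}=\T_\C^{2n}/\T_\C^d$. The key external input is Berchtold's theorem \cite[Theorem~4.1]{Ber}: any isomorphism between two affine toric varieties can be replaced by a \emph{torus-equivariant} one, concretely induced by a coordinate permutation $\sigma\in\gerS_{2n}$ on $\C^{2n}$ equivariant for some $\psi\in\Aut(\T_\C^d)$. This gives $\psi^*\bm{a'_i}=\pm\bm{a_{\sigma(i)\bmod n}}$ on the level of characters. The remaining work---which is genuinely delicate---is to show that $\sigma$ can be rearranged into the form $(D\mid -D)$ for a signed permutation $D\in\gerS_n^\pm$, so that $A'=\psi^*AD^{-1}$, i.e.\ $A\sim A'$. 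Then Lemma~\ref{lem:fund} supplies the honest $\Cstar\times\T_\C^{n-d}$-equivariant symplectic isomorphism. In other words, the paper does not try to make the original $\Phi$ torus-equivariant; it uses $\Phi$ only as a witness that the toric data match, extracts the combinatorial relation $A\sim A'$, and then builds the required equivariant isomorphism from scratch via Lemma~\ref{lem:fund}. Your ``toric-geometric analysis'' instinct in the last paragraph is on target, but the specific ingredient you are missing is Berchtold's lifting theorem together with the signed-permutation bookkeeping.
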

Our proof is based on Theorem \ref{thm:iMainthm}. By the universality, if $Y(A, 0)$ and $Y(A', 0)$ are isomorphic to each other as conical symplectic varieties, we can show {that} $X(A, 0)$ and $X(A', 0)$ are $\Cstar$-equivariant isomorphic as Poisson varieties. By \cite{Ber}, an isomorphism between two toric varieties {can be replaced by a maximal torus $\T^{2n-d}_\C$-equivariant isomorphism. Moreover, we show that this isomorphism can be taken as $\Cstar\times\T_\C^{2n-d}$-equivariant one preserving Poisson structures and commuting with moment maps.} In particular, we can prove that $Y(A, 0)$ and $Y(A', 0)$ are $\T_\C^{n-d}$-equivariant isomorphic. 



For a finite graph $G$, the hypertoric variety $Y(A_G, 0)$ (called toric quiver varieties) associated to the incidence matrix $A_G$ of $G$ is called a toric quiver variety. This is the same as the Nakajima quiver variety whose dimension vector has all coordinates equal to one (see Example \ref{eg:quiver}). As a first corollary of Theorem \ref{thm:introclassification}, we obtain a combinatorial criterion that two affine toric quiver varieties are mutually isomorphic as conical symplectic varieties. 
\begin{corollary}(cf.\ Corollary \ref{cor:quiver})
Let $G_1$ and $G_2$ be two finite graphs (without isolated vertices). Then, $Y(A_{G_1}, 0)\cong Y(A_{G_2}, 0)$ {as conical symplectic varieties} if and only if $G_2$ can be transformed into $G_1$ by a sequence of the following operations: {\rm (i)} A vertex identification, a vertex cleaving, and {\rm (ii)} the Whitney twist (for the detail, see Corollary \ref{cor:quiver}).

\end{corollary}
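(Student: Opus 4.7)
The plan is to combine Theorem \ref{thm:introclassification} with Whitney's classical $2$-isomorphism theorem for graphs.

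By Theorem \ref{thm:introclassification}, $Y(A_{G_1}, 0)\cong Y(A_{G_2}, 0)$ as conical symplectic varieties if and only if the regular matroids represented by the unimodular matrices $A_{G_1}$ and $A_{G_2}$ are isomorphic. The first step is therefore to identify this regular matroid concretely. After fixing an orientation of $G$, the columns of the incidence matrix $A_G$ are indexed by $E(G)$, and a subset of columns is linearly independent over $\Q$ precisely when the corresponding subgraph contains no cycle. Hence the regular matroid represented by $A_G$ is exactly the graphic matroid $M(G)$ on the edge set. This reduces the statement of the corollary to a purely combinatorial characterization of when $M(G_1)\cong M(G_2)$.

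For this characterization I would invoke Whitney's $2$-isomorphism theorem (Whitney, 1933): for finite graphs $G_1$ and $G_2$ without isolated vertices, $M(G_1)\cong M(G_2)$ if and only if $G_1$ and $G_2$ are $2$-isomorphic, i.e., one is obtained from the other by a finite sequence of (i) vertex identifications and vertex cleavings at cut vertices, and (ii) Whitney twists along $2$-vertex separations. Combined with the preceding reduction this yields both directions of the corollary immediately.

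The only non-routine point, and the main thing to be careful about, is matching conventions: one must verify that the regular matroid used in Theorem \ref{thm:introclassification} (which is the one classifying $Y(A_G,0)$) is the graphic matroid $M(G)$ on columns of $A_G$ rather than its dual (the cographic/bond matroid), since the exact sequence (**) makes both matroids visible from the hypertoric data through the symmetric roles of $A$ and $B$. Once this convention is pinned down, the hypothesis ``without isolated vertices'' exactly matches the hypothesis in Whitney's theorem, the isolated vertices being invisible to $M(G)$, and the corollary follows.
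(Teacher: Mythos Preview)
Your proposal is correct and follows essentially the same approach as the paper: reduce via Theorem \ref{thm:introclassification} (equivalently Theorem \ref{thm:classification}) to the isomorphism of matroids $M(A_{G_1})\cong M(A_{G_2})$, identify $M(A_G)$ with the graphic matroid $M(G)$, and then invoke Whitney's 2-isomorphism theorem (stated in the paper as Theorem \ref{thm:Whitney}). Your extra care about the graphic-versus-cographic convention is unnecessary here since Theorem \ref{thm:classification} records both (iii) $M(A)\cong M(A')$ and (iv) $M(B^T)\cong M(B'^T)$ as equivalent, but it does no harm.
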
 
  
As a second corollary, we classify 4- and 6- dimensional affine hypertoric varieties concretely. By Theorem \ref{thm:introclassification}, this is equivalent to classify regular matroids of $\rank$ 2 and 3. As a result, these affine hypertoric varieties are obtained as toric quiver varieties (we describe the corresponding quivers, and give some 8-dimensional examples which cannot be raised as any toric quiver varieties in Remark \ref{rem:quiver2}). In the 4-dimensional case, we can describe more explicitly as the following:

\begin{theorem}{\rm (cf.\ Theorem \ref{thm:4dimension})}
Each 4-dimensional affine hypertoric variety $Y(A, 0)$ is isomorphic to exactly one of the following as conical symplectic varieties:
\begin{itemize}
\item[(i)]$S_{A_{\ell_1-1}}\times S_{A_{\ell_2-1}}$,
\item[(ii)]$\overline{{\calO}^{\tmin}}(\{\ell_1, \ell_2, \ell_3\}):=\Set{\begin{pmatrix}u_1&x_{12}&x_{13}\\y_{12}&u_2&x_{23}\\y_{13}&y_{23}&u_3\end{pmatrix} \in \gersl_3 \ | \ \text{{\rm All 2 by 2 minors of}} \begin{pmatrix}u_1^{\ell_1}&x_{12}&x_{13}\\y_{12}&u_2^{\ell_2}&x_{23}\\y_{13}&y_{23}&u_3^{\ell_3}\end{pmatrix}=0}$,
\end{itemize}
where $S_{A_{\ell-1}}$ is the $A_{\ell-1}$ type surface singularity. 
Furthermore, the Namikawa--Weyl group $W$ is given by {\rm (i)} $W=\gerS_{\ell_1}\times\gerS_{\ell_2}$, {\rm (ii) }$W=\gerS_{\ell_1}\times\gerS_{\ell_2}\times\gerS_{\ell_3}$.
\end{theorem}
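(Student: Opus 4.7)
The plan is first to invoke Theorem \ref{thm:introclassification} to reduce the classification of $4$-dimensional affine hypertoric varieties (as conical symplectic varieties) to that of rank-$2$ regular matroids: since $\dim Y(A, 0) = 2(n-d) = 4$ forces $n - d = 2$, the rows of $B$ realize a rank-$2$ regular matroid, and $Y(A, 0)$ is determined up to conical symplectic isomorphism by its isomorphism class.

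To enumerate such matroids, I would perform a $GL(2, \Z)$-change of basis on $B$ (which does not affect $Y(A, 0)$) to normalize two distinct parallel classes as $\bm{b^{(1)}} = (1, 0)$ and $\bm{b^{(2)}} = (0, 1)$. Any further distinct primitive row $\bm{b^{(k)}}$ then satisfies $|\det(\bm{b^{(1)}}, \bm{b^{(k)}})| = |\det(\bm{b^{(2)}}, \bm{b^{(k)}})| = 1$ by unimodularity, hence $\bm{b^{(k)}} \in \{(\pm 1, \pm 1)\}$; but any two non-parallel elements of this set have determinant $\pm 2$, violating unimodularity. So the number $s$ of distinct parallel classes is $2$ or $3$; for $s = 3$ I further normalize $\bm{b^{(3)}} = (1, 1)$, and in either case the matroid is determined by the unordered multiset $\{\ell_1, \ldots, \ell_s\}$ of multiplicities.

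Finally, I identify $Y(A, 0)$ in each case. For $s = 2$, the block form of $B$ yields a factorization $Y(A, 0) \cong Y(A_1, 0) \times Y(A_2, 0)$ into two $2$-dimensional affine hypertoric varieties, each a Kleinian $A_{\ell_i - 1}$ surface singularity by the standard identification (with $S_{A_0} = \C^2$ when $\ell_i = 1$), giving form (i). For $s = 3$, I would compute the coordinate ring of $Y(A, 0) = \mu^{-1}(0)/\hspace{-3pt}/\T_\C^d$ directly from $\T_\C^d$-invariants. Grouping the coordinates of $\C^{2n}$ as $(z_{i, a}, w_{i, a})$ with $i \in \{1, 2, 3\}$ and $1 \leq a \leq \ell_i$, the moment-map equations force each $z_{i, a} w_{i, a}$ to depend only on $i$, so $\prod_{a=1}^{\ell_i} z_{i, a} w_{i, a}$ realizes $u_i^{\ell_i}$ (up to a sign on $i = 3$ coming from $\bm{b^{(3)}} = (1, 1)$); together with carefully chosen mixed invariants, these generate the invariant ring modulo exactly the $2 \times 2$-minor relations of the modified matrix displayed in the statement, giving form (ii). In both cases the Namikawa--Weyl group is $\gerS_{\ell_1} \times \cdots \times \gerS_{\ell_s}$ by the formula established in Section 3. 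The main obstacle is the $s = 3$ identification: because the diagonal entries enter only through their $\ell_i$-th powers, the off-diagonal generators have substantially higher degree than in the $\ell_i = 1$ nilpotent-orbit analogue, so a careful choice of a minimal generating set and verification of the full ideal of relations is required; alternatively, one can match both sides by comparing their universal Poisson deformations via Theorem \ref{thm:iMainthm} and concluding by universality.
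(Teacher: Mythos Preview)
Your proposal is correct and follows essentially the same route as the paper: reduce via Theorem~\ref{thm:classification} to rank-$2$ regular matroids, normalize $B$ to obtain the two reductions $\overline{B}_1=\begin{psmallmatrix}1&0\\0&1\end{psmallmatrix}$ and $\overline{B}_2=\begin{psmallmatrix}1&0\\0&1\\1&1\end{psmallmatrix}$ (the paper records this as Proposition~\ref{prop:classify_graph}(1)), treat case~(i) by the product decomposition of Lemma~\ref{lem:product} together with Example~\ref{Amsurf}, and handle case~(ii) by a direct computation of the invariant ring via Lemma~\ref{ring str lem}; the Namikawa--Weyl group then drops out of Theorem~\ref{thm:Mainthm}.

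The only substantive difference is your worry about the $s=3$ computation. The paper's approach here is simpler than you anticipate: it first works out the case $\ell_1=\ell_2=\ell_3=1$ completely (where the generators $z_iw_j$ are visibly the matrix entries and the $2\times2$ minors give the ideal), and then observes that for general $\ell_i$ the structure of $\Image B$ forces every $f_{\bm{\beta}}$ to be a product of the form $\bigl(\prod_{a} z_{1,a}\bigr)^{p}\bigl(\prod_{b} z_{2,b}\bigr)^{q}\bigl(\prod_{c} w_{3,c}\bigr)^{p+q}$ (and sign variants), so the minimal off-diagonal generators are exactly the $f_{\bm{\beta}}$ with $(p,q)\in\{(\pm1,0),(0,\pm1),(\pm1,\mp1)\}$; these play the role of $x_{ij},y_{ij}$, while $u_i=z_{i,a}w_{i,a}$, and the same $2\times2$-minor relations hold with $u_i$ replaced by $u_i^{\ell_i}$ because $\prod_a z_{i,a}\cdot\prod_a w_{i,a}=u_i^{\ell_i}$ on $\mu^{-1}(0)$. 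Your alternative via universality of Poisson deformations would also work but is unnecessary.
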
 

As a related result, we can also characterize nilpotent orbit closures among affine hypertoric varieties (cf.\ Theorem \ref{thm:char_nilpotent}).


Finally, in section 5, we will consider how many projective crepant resolutions $Y(A, 0)$ admits. By combining known results, we note all such resolutions of each $Y(A, 0)$ can be obtained by the variations of the GIT parameter $\alpha$. Then, we can count the number of all projective crepant resolutions in terms of the associated hyperplane arrangement $\calH_A$ (see section 5) as the following.   
\begin{lemma}{\rm (cf.\ Corollary \ref{cor:counting})}
For $Y(A, 0)$, the number of its projective crepant resolutions is $\fr<r(\calH_A)/|W_B|>$, where $r(\calH_A)$ is the number of chambers of $\calH_A$.
\end{lemma}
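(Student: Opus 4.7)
The plan is to assemble three ingredients: (a) all projective crepant resolutions of $Y(A,0)$ arise from GIT and are parametrized by chambers of the arrangement $\calH_A$; (b) two such resolutions are isomorphic as varieties over $Y(A,0)$ iff the corresponding chambers coincide; (c) two resolutions are abstractly isomorphic (i.e.\ isomorphic as conical symplectic varieties, forgetting the map to $Y(A,0)$) iff the chambers lie in the same $W_B$-orbit, and the $W_B$-action on the set of chambers is free. Dividing then yields $r(\calH_A)/|W_B|$.

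First I would recall the general fact (due to Namikawa, cited in the excerpt as \cite{NamPDbir}) that for a conical symplectic variety $Y_0$ with a projective symplectic resolution, the set of isomorphism classes of projective crepant resolutions of $Y_0$ is in bijection with the set of $W$-orbits of chambers in the movable cone inside $H^2(Y,\R)$, where $W$ is the Namikawa--Weyl group. Specializing to $Y_0=Y(A,0)$, Theorem \ref{thm:iMainthm} gives $W=W_B$ together with the explicit action on the parameter space $\C^d\cong H^2(Y(A,\alpha),\C)$. What remains is to identify the chamber structure and the $W_B$-action in concrete hypertoric terms.

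For the chamber structure I would use variation of GIT: as $\alpha$ varies over the real parameter space, the GIT quotient $Y(A,\alpha)$ is constant on the open chambers cut out by the walls $\{\alpha : \bm{a_i}\cdot \alpha = 0\text{ for some subset giving a non-generic stability}\}$, and these walls are precisely the hyperplanes defining $\calH_A$ (this is standard for toric/hypertoric GIT, and the movable cone is covered by these chambers). Hence the set of projective crepant resolutions over $Y(A,0)$, up to the identity over $Y(A,0)$, has cardinality $r(\calH_A)$. The $W_B$-action on $\C^d$ constructed in the proof of Theorem \ref{thm:iMainthm} (permuting the coordinates inside each block of repeated rows of $B$) permutes the walls of $\calH_A$, hence permutes chambers.

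The main obstacle, and the place where real work is needed, is the freeness of the $W_B$-action on the set of chambers. I would argue this as follows: a non-identity element $w\in W_B$ swaps at least two indices $i\neq j$ with $\bm{b^{(i)}}=\bm{b^{(j)}}$, which means in the arrangement $\calH_A$ there is a wall that $w$ sends to a distinct wall (or the same wall with opposite coorientation). Using that no chamber is preserved under a non-trivial permutation of two parallel but distinct walls bounding it, one concludes the stabilizer of any chamber in $W_B$ is trivial. Given freeness, the orbit-stabilizer theorem gives $|W_B|\cdot(\text{number of orbits}) = r(\calH_A)$, so the number of projective crepant resolutions equals $r(\calH_A)/|W_B|$, as claimed.
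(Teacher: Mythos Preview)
Your outline has the right ingredients but conflates two notions of equivalence in steps (b) and (c), and this is where the argument breaks. The quantity being counted is the number of projective crepant resolutions up to isomorphism \emph{over} $Y(A,0)$, not up to abstract isomorphism as conical symplectic varieties. With that correction, your (b) is false: two distinct chambers of $\calH_A$ in the same $W_B$-orbit already give isomorphic resolutions over $Y(A,0)$ (for the $A_1$ surface singularity the two chambers $\alpha>0$ and $\alpha<0$ both give the unique minimal resolution). So the set of resolutions over $Y(A,0)$ does not have cardinality $r(\calH_A)$ as you write; that number is the overcount you then want to divide by $|W_B|$. And your (c), phrased in terms of ``abstract isomorphism, forgetting the map,'' is neither what is needed nor what you argue.

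The paper avoids this by invoking directly (from \cite{BPW1}) that the movable cone of a fixed resolution is a fundamental domain for the Namikawa--Weyl group action on $H^2(Y,\R)$. Since the ample cones of all projective crepant resolutions tile the movable cone, the number of resolutions equals the total number of chambers divided by $|W|$. The identification of the full chamber structure with $\calH_A$ comes from Konno's result that under the Kirwan isomorphism the chamber containing $\alpha$ is exactly the $\pi$-ample cone of $Y(A,\alpha)\to Y(A,0)$; this is the step you gloss as ``standard for toric/hypertoric GIT.'' Note also that freeness of the $W_B$-action on chambers is automatic from the fundamental-domain property, so your separate argument for it is unnecessary---and the sketch you give (``no chamber is preserved under a non-trivial permutation of two parallel but distinct walls bounding it'') is not a proof as it stands.
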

Then, we focus on the case of the (non-trivial) 4-dimensional affine hypertoric variety $\overline{\calO^{\tmin}}(\{\ell_1, \ell_2, \ell_3\})$. Since we can show {that} the associated $\calH_A$ is essentially the same as the one considered in \cite{ER}, we determine the number $\fr<r(\calH_A)/|W_B|>$ when $\ell_3=1$ or $2$ explicitly (cf.\ Corollary \ref{cor:counting_4dim}). 
\end{spacing}
\section*{Acknowledgements}
The author wishes to express his gratitude to his supervisor Yoshinori Namikawa for suggesting {him} the problem and for many stimulating discussions. {He also wishes to thank Nicholas Proudfoot for answering some questions.} 
He is also grateful to Akiyoshi Tsuchiya for his several helpful suggestions concerning Macaulay2. {The author expresses his thanks to Makoto Enokizono for pointing out a gap in the previous proof of Theorem \ref{thm:classification}. }
This paper is based on the author's master thesis. 
\section{Preliminary - definition and basic results on hypertoric varieties}

In this section, we review the definition of (conical) symplectic varieties and hypertoric varieties, and we recall some basic properties of them. 
A pair $(Y_0, \overline{\om}_0)$ of a normal algebraic variety $Y_0$ and {a} 2-form $\overline{\om}_0$ on the smooth locus $(Y_0)_{\text{sm}}$ is called a {\it symplectic variety} if $\overline{\om}_0$ is symplectic and there exists (or equivalently, for any) a resolution $\pi: Y\to Y_0$ such that the pull-back of $\overline{\om}_0$ by $\pi$ extends to {a} holomorphic 2-form $\om$ on $Y$. Moreover, a resolution $\pi: Y\to Y_0$ is called {\it symplectic} if $\om$ is also symplectic. These definitions are due to Beauville \cite{Bea}, and it is known that for a symplectic variety $Y_0$, a resolution $\pi: Y\to Y_0$ is a symplectic resolution if and only if $\pi$ is a crepant resolution, i.e., $\pi^*K_{Y_0}=K_Y$, where $K_Y$ (resp. $K_{Y_0}$) denotes the canonical sheaf of $Y$ (resp. $Y_0$). In this paper, we only consider projective crepant resolutions, so if we simply say crepant resolutions, it means projective ones. 

Now, we define conical symplectic varieties as the following: 
\begin{definition}\label{def:conical}
An affine symplectic variety $(Y_0\hspace{-2pt}=\hspace{-2pt}\Spec R, \ \overline{\om}_0)$ with $\Cstar$-action (called conical $\Cstar$-action) is called a {\it conical symplectic variety} if it satisfies the following: 
\begin{itemize}
\item[(i)]The grading induced from the $\Cstar$-action to the coordinate ring $R$ is positive, i.e., $R=\bigoplus_{i\geq0}{R_i}$ and $R_0=\C$. 
\item[(ii)]$\overline{\om}_0$ is {\it homogeneous} with respect to the $\Cstar$-action, i.e., there exists $\ell\in\Z$ (the {\it weight} of $\overline{\om}_0$) such that $t^*\overline{\om}_0=t^{\ell}\overline{\om}_0 \ (t\in\Cstar)$. 
\end{itemize}
For a given two conical symplectic varieties, we say that they are isomorphic as conical symplectic varieties if there exists an $\Cstar$-equivariant isomorphism between them preserving symplectic structures.   
\end{definition}  
\begin{remark}
As noted in \cite[Lemma 2.2]{NamEq}, the weight $\ell$ is always positive. 
\end{remark}

There are many examples of conical symplectic varieties, for example, the (normalization of) nilpotent orbit closures in semisimple Lie algebras, the Slodowy slices, quiver varieties, hypertoric varieties, symplectic quotient singularities, and so on. 
Next, we define Poisson varieties (more generally, Poisson $S$-schemes), which include symplectic varieties as special cases. 
\begin{definition}Let $S$ be a scheme. A pair $(\calY, \{-, -\})$ of a $S$-scheme and a skew-symmetric $\calO_S$-bilinear {morphism} $\{-, -\}: \calO_{\calY}\times\calO_{\calY}\to\calO_{\calY}$ is a {\it Poisson $S$-scheme} if it satisfies the following: 
\begin{itemize}
\item[(1)]$\{f, gh\}=\{f, g\}h+\{f, h\}g$,
\item[(2)]$\{f, \{g, h\}\}+\{g, \{h, f\}\}+\{h, \{f, g\}\}=0$.
\end{itemize}
In particular, if $S=\Spec\C$ and $\calY$ is an algebraic variety, then we will call $\calY$ a {\it Poisson variety}. 
\end{definition}
Below, for an algebraic variety $Z$, we denote its smooth locus by $Z_{\text{sm}}$. For a symplectic variety $(Y_0, \overline{\om}_0)$, one can consider a homomorphism $H : \calO_{(Y_0)_{\text{sm}}}\to \calT_{(Y_0)_{\text{sm}}}$ as $\calO_{(Y_0)_{\text{sm}}}$-module which is defined as the composition of the exterior derivation $d: \calO_{(Y_0)_{\text{sm}}}\to \Om^1_{(Y_0)_{\text{sm}}}$ and the inverse of the isomorphism $\calT_{(Y_0)_{\text{sm}}} \to \Om^1_{(Y_0)_{\text{sm}}} : v\mapsto \overline{\om}_0(v, -)$, where $\Om^1_{(Y_0)_{\text{sm}}}$ is the sheaf of 1-forms on $(Y_0)_{\text{sm}}$, and $\calT_{(Y_0)_{\text{sm}}}$ is its dual sheaf, so-called the {tangent sheaf}. Then, a natural Poisson structure on $(Y_0)_{\text{sm}}$ is defined as $\{f, g\}:=\overline{\om}_0(H_f, H_g)$, where $H_f:=H(f)$ ($f\in\calO_{(Y_0)_{\text{sm}}}$). By the normality of $Y_0$, this Poisson structure uniquely extends to the one on $\calO_{Y_0}$. In particular, for a conical symplectic variety $Y_0=\Spec R$ with weight $\ell$, by definition, $\{f, g\}\in R_{i+j-\ell} \ (f\in R_i, g\in R_j)$. In section 3, we discuss deformations of this Poisson structure of a symplectic variety.

Now we move to define hypertoric varieties and related notions. Our notation is the same as \cite{HS}. Let $A: \Z^n\xrightarrowdbl{}N\cong\Z^d$ be a surjective linear map (in this paper, we almost always fix a basis of $N$, identify $N$ with $\Z^d$ and consider $A=[\bm{a_1}, \ldots, \bm{a_n}]$ as a $d\times n$-matrix). Then, take $B^T=[\bm{b_1}, \ldots ,\bm{b_n}]\in \Mat_{(n-d)\times n}(\Z)$ as the following is exact: 
\[\begin{tikzcd}
0\arrow{r}&\Z^{n-d}\arrow{r}{B} &\Z^{n}\arrow{r}{A}&N\cong\Z^d\arrow{r}&0         
\end{tikzcd}.\]
The configuration $\{\bm{b_1}, \ldots ,\bm{b_n}\}$ in $\Z^{n-d}$ is called a {\it Gale dual} of $\{\bm{a_1}, \ldots ,\bm{a_n}\}$. 
By applying $\Hom(-, \Cstar)$ to the above exact sequence, we obtain the following exact sequence of algebraic tori. 
\[\begin{tikzcd}
1\ar[r]&\T_\C^d\ar[r, "{A^T}"]&\T_\C^n\ar[r, "{B^T}"]&\T_\C^{n-d}\ar[r]&1
\end{tikzcd},\] 
where $\T_\C^k:=(\Cstar)^k$. Then, through the embedding $\T_\C^d\overset{A^T}{\hookrightarrow}\T_\C^n$ and the natural action $\T_\C^n\curvearrowright(T^*\C^n, \omega_\C)=(\C^{2n}, \sum_{j=1}^n{dz_j\wedge dw_j})$, we obtain a hamiltonian $\T_\C^d$-action on $(\C^{2n}, \omega_\C)$ (for the definition of the hamiltonian action, we refer \cite[Definition 9.43]{Kir}). More explicitly, this action is described as the following: 
\[\bm{t}\cdot(z_1, \ldots ,z_n, w_1, \ldots ,w_n):=(\bm{t}^{\bm{a_1}}z_1, \ldots ,\bm{t}^{\bm{a_n}}z_n, \bm{t}^{\bm{-a_1}}w_1, \ldots ,\bm{t}^{\bm{-a_n}}w_n),\]
where $\bm{t}^{\bm{a_j}}:=t_1^{a_{1j}}\cdots t_d^{a_{dj}}$. 
Since this action is hamiltonian, we have the $\T_\C^d$-invariant moment map $\mu: \C^{2n}\to(\Lie(\T_\C^d))^{{*}}=\C^d$ (more strongly, $\mu$ is $\T_\C^n$-invariant), and in this case, we can describe it as the following: 
\[\mu(\bm{z}, \bm{w})=\sum_{j=1}^nz_jw_j\bm{a_j}.\]
Then, the Lawrence toric variety (resp. hypertoric variety) is defined as the GIT quotient of $\C^{2n}$ (resp. $\mu^{-1}({\xi})$ ({$\xi\in\C^d$})) by $\T_\C^d$, and actually we can describe this explicitly as the following (cf.\ \cite{HS}). 
For $\alpha\in\Z^d=\Hom(\T_\C^d, \Cstar)$ and the coordinate ring $\C[\bm{z}, \bm{w}]$ (resp. $\C[\mu^{-1}(\xi)]$) of $\C^{2n}$ (resp. $\mu^{-1}(\xi)$), we set 
\[\C[M]^{\T_\C^d, \alpha}:=\{f\in\C[M] \ | \ f(\bm{t}\cdot m)=\alpha(\bm{t})f(m) \ \forall \bm{t}\in\T_\C^d\},\]
where $M=\C^{2n}$ or $\mu^{-1}(\xi)$. 

\begin{definition}For $\alpha\in\Z^d=\Hom(\T_\C^d, \Cstar)$, we consider a graded algebra $\bigoplus_{k\in\Z_{\geq0}}{\C[M]^{\T_\C^d, k\alpha}}$. Then, we define the {\it Lawrence toric variety} $X(A, \alpha)$ and $Y(A, (\alpha, \xi))$ as the following: 
\[X(A, \alpha):=\Proj\left(\bigoplus_{k\in\Z_{\geq0}}{\C[\bm{z}, \bm{w}]^{\T_\C^d, k\alpha}}\right), \ \ \ Y(A, (\alpha, \xi)):=\Proj\left(\bigoplus_{k\in\Z_{\geq0}}{\C[\mu^{-1}(\xi)]^{\T_\C^d, k\alpha}}\right).\]
Since we will mainly consider the case of $\xi=0$, so we often write $Y(A, \alpha)$ for $Y(A, (\alpha, 0))$. We call $Y(A, \alpha)$ a {\it hypertoric variety}. 
\end{definition}

\begin{remark}\label{rem:comm diagram}
Since in the case of $\alpha=0$, we have $\Proj\left(\bigoplus_{k\in\Z_{\geq0}}{\C[M]^{\T_\C^d, k\alpha}}\right)=\Spec \C[M]^{\T_\C^d}$, so for any $\alpha$, we have a natural projective morphism $\Pi: X(A, \alpha)\to X(A, 0)$ (resp. $\pi: Y(A, \alpha) \to Y(A, 0)$). Furthermore, as $\mu$ is a $\T_\C^d$-invariant map, we have the commutative diagram below. Also, we consider a natural $\Cstar$-action on $\C^{2n}$ such that $s\cdot(z_1, \ldots ,z_n, w_1, \ldots ,w_n)=(s^{-1}z_1, \ldots ,s^{-1}z_n, s^{-1}w_1, \ldots ,s^{-1}w_n)$. Since this action commutes with $\T_\C^d$-action, the following diagram is $\Cstar$-equivariant with respect to the induced $\Cstar$-action, where $\Cstar$ acts on $\C^d$ as $s\cdot v=s^{-2}v$ ($v\in\C^d$).   
\[
\begin{tikzcd}[row sep=tiny, column sep=tiny, contains/.style = {draw=none,"\in" description,sloped}, icontains/.style = {draw=none,"\ni" description,sloped}]
&Y(A, \alpha)\ar[dl, hook']\ar[rr, "\pi"]\ar[dd]&&Y(A, 0)\ar[dl, hook'] \ar[dd]\\
X(A, \alpha)\ar[rr, crossing over, "\Pi" near end]\ar[dd, "\overline{\mu}_\alpha"]&&X(A, 0)&\\
&0\ar[dl, contains]\ar[rr, mapsto]&&0\ar[dl, contains]\\
\C^d\ar[rr, equal]\ar[ur, icontains]&&\C^d \ar[from=uu, crossing over, "\overline{\mu}_0"near start]\ar[ur, icontains]&
\end{tikzcd}
\]  
\end{remark}
Although our definition of Lawrence toric varieties and hypertoric varieties is the abstract one, we can also define them geometrically as the categorical quotient of the $\alpha$-(semi)stable subset of $M=\C^{2n}$ (resp. $\mu^{-1}(\xi)$) in the sense of geometric invariant theory. An element $p\in M$ is called {\it $\alpha$-semistable} if there exists some $k>0$ and $f\in\C[M]^{\T_\C^d, k\alpha}$ such that $f(p)\neq0$, and we denote the set of $\alpha$-semistable elements by $M^{\alpha-ss}$. 
Then, as well-known in geometric invariant theory, we have $X(A, \alpha)=(\C^{2n})^{\alpha-ss}/\hspace{-3pt}/\T_\C^d$ (resp. $Y(A, (\alpha, \xi))=\mu^{-1}(\xi)^{\alpha-ss}/\hspace{-3pt}/\T_\C^d$), where $/\hspace{-3pt}/$ denotes the categorical quotient. 
Also, an element $p\in M^{\alpha-ss}$ is called {\it $\alpha$-stable} if the stabilizer group of $\T_\C^d$ at $p$ is finite and $\T_\C^d\cdot p\subseteq M^{\alpha-ss}$ is closed. We denote the set of $\alpha$-stable elements by $M^{\alpha-st}$.     

As noted in \cite[Lemma 3.4]{Kosurvey} (actually, by some easy computations), we describe the $\alpha$-semistable set as the following:  
\begin{equation*}\tag{*}
(\C^{2n})^{\alpha-ss}=\Set{(\bm{z}, \bm{w}) \in \C^{2n} \ | \ \alpha \in \sum_{i: z_i \neq0}{\Q_{\geq0}\bm{a_i}} + \sum_{i:w_i\neq0}{\Q_{\geq0}(-\bm{a_i})}}.
\end{equation*}
Also, we have $\mu^{-1}(\xi)^{\alpha-ss}=\mu^{-1}(\xi)\cap(\C^{2n})^{\alpha-ss}$. 

To consider when $(\C^{2n})^{\alpha-ss}$ and $(\C^{2n})^{\alpha-st}$ coincides, we describe the condition that the stabilizer group at a point $(\bm{z}, \bm{w})\in\C^{2n}$ of $\T_\C^d$-action is finite (resp. trivial). First, we remark the following easy lemma.  
\begin{lemma}For a subset $J\subseteq [n]:=\{1, 2, \ldots ,n\}$, we consider the subtorus $\T_\C^{|J|}\subseteq\T_\C^{n}$ corresponding  to the natural projection $\Z^n\xrightarrowdbl{} \Z^{|J|}$. Denote the submatrix of $A$ which consists of column vectors corresponding to $J$ by $A_J$. Then we have the following:  
\begin{itemize}
\item[(1)] $\T_\C^d\xrightarrow{A_J^T}\T_\C^{|J|}$ is injective if and only if $\Z^{|J|}\xrightarrow{A_J}\Z^d$ is surjective.
\item[(2)]The kernel of $\T_\C^d\xrightarrow{A_J^T}\T_\C^{|J|}$ is finite if and only if $\Q^{|J|}\xrightarrow{A_J\otimes_\Z\Q}\Q^d$ is surjective. 
\end{itemize}
\end{lemma}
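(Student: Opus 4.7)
The plan is to apply the contravariant exact functor $\Hom(-, \Cstar)$ (character duality) to the right-exact sequence
\[
\Z^{|J|} \xrightarrow{A_J} \Z^d \to \Coker(A_J) \to 0,
\]
which yields the exact sequence
\[
0 \to \Hom(\Coker(A_J), \Cstar) \to \T_\C^d \xrightarrow{A_J^T} \T_\C^{|J|}.
\]
Hence $\Ker(A_J^T) \cong \Hom(\Coker(A_J), \Cstar)$, and both parts of the lemma reduce to understanding this group in terms of $\Coker(A_J)$. For (1), $A_J$ is surjective iff $\Coker(A_J) = 0$ iff $\Hom(\Coker(A_J), \Cstar) = 0$. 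The nontrivial direction of the last equivalence uses that any nonzero finitely generated abelian group $C$ has $\Hom(C, \Cstar) \neq 0$: if $C$ has positive free rank, this is immediate; if $C$ has nontrivial torsion of order divisible by some $n$, then $\Hom(C, \Cstar)$ contains $n$-th roots of unity, since $\Cstar$ is divisible.

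For (2), by the structure theorem write $\Coker(A_J) \cong \Z^r \oplus T$ with $T$ finite. Then $\Hom(\Coker(A_J), \Cstar) \cong (\Cstar)^r \oplus \Hom(T, \Cstar)$, which is finite iff $r = 0$, iff $\Coker(A_J) \otimes_\Z \Q = 0$, iff $A_J \otimes_\Z \Q : \Q^{|J|} \to \Q^d$ is surjective (using that tensoring with $\Q$ is exact). There is essentially no obstacle here: the entire lemma is a routine application of the contravariant exact equivalence between finitely generated abelian groups and diagonalizable algebraic groups, and the only subtlety — showing $\Hom(C, \Cstar) \neq 0$ for nonzero $C$ — boils down to divisibility (hence injectivity) of $\Cstar$ as an abelian group.
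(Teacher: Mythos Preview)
Your proof is correct. The paper states this lemma without proof (calling it an ``easy lemma''), so there is nothing to compare against; your argument via the exactness of $\Hom(-,\Cstar)$ on abelian groups (using that $\Cstar$ is divisible) together with the structure theorem for finitely generated abelian groups is the standard and complete justification.
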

Now, for $(\bm{z}, \bm{w}) \in \C^{2n}$, we set $J_{\bm{z}, \bm{w}}:=\{j \in [n] \ | \ z_j\neq0 \ \text{or}\ w_j\neq0\}$. Then, the stabilizer group {$\Stab_{\bm{z}, \bm{w}}\T_\C^d$} of the action $\T_\C^d$ on $\C^{2n}$ at $(\bm{z}, \bm{w})$ is given by 
\begin{align*}
{\Stab_{\bm{z}, \bm{w}}\T_\C^d}=\Ker(\T_\C^d \xrightarrow{A_{J_{\bm{z}, \bm{w}}}^T}\T_\C^{|J_{\bm{z}, \bm{w}}|}). 
\end{align*}

\begin{corollary}\label{cor:stabilizer} In the setting above, 
\begin{itemize}
\item[(1)]{$\Stab_{\bm{z}, \bm{w}}\T_\C^d$} is finite if and only if $\sum_{j\in J_{\bm{z}, \bm{w}}}{\Q\bm{a_j}}=\Q^d$. 
\item[(2)]{$\Stab_{\bm{z}, \bm{w}}\T_\C^d=1$} if and only if $\sum_{j\in J_{\bm{z}, \bm{w}}}{\Z\bm{a_j}}=\Z^d$. 
\end{itemize}
\end{corollary}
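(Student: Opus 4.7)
The plan is to derive the corollary directly from the preceding lemma, which characterizes injectivity and finiteness of the kernel for the map $\T_\C^d \xrightarrow{A_J^T} \T_\C^{|J|}$ in terms of (rational/integral) surjectivity of $A_J$. Since the stabilizer has already been identified as
\[
\Stab_{\bm{z},\bm{w}}\T_\C^d = \Ker\bigl(\T_\C^d \xrightarrow{A_{J_{\bm{z},\bm{w}}}^T} \T_\C^{|J_{\bm{z},\bm{w}}|}\bigr),
\]
the corollary reduces to translating the two clauses of the lemma into the column-span conditions on $\bm{a_j}$.

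For (1), I would apply part (2) of the lemma with $J = J_{\bm{z},\bm{w}}$: the stabilizer is finite iff $A_J \otimes_\Z \Q : \Q^{|J|} \to \Q^d$ is surjective, which is exactly the statement that the columns $\{\bm{a_j}\}_{j\in J}$ span $\Q^d$ over $\Q$, i.e.\ $\sum_{j\in J_{\bm{z},\bm{w}}}\Q\bm{a_j} = \Q^d$.

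For (2), I would apply part (1) of the lemma: the stabilizer is trivial iff $A_J^T$ is injective on $\T_\C^d$, equivalently iff $A_J : \Z^{|J|} \to \Z^d$ is surjective, which is precisely $\sum_{j\in J_{\bm{z},\bm{w}}}\Z\bm{a_j} = \Z^d$.

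There is essentially no obstacle here; the work has already been done in the lemma (which itself follows from the Pontryagin/Hom-duality between lattices and algebraic tori: applying $\Hom(-,\Cstar)$ sends surjections of free abelian groups to closed embeddings of tori, and rational surjectivity corresponds to having finite cokernel, hence finite kernel on the dual side). The only thing I would emphasize in the write-up is that ``$\Stab_{\bm{z},\bm{w}}\T_\C^d$ finite/trivial'' corresponds to ``kernel finite/trivial'' via the identification above, so the two parts of the corollary are nothing more than restatements of the two parts of the lemma after setting $J = J_{\bm{z},\bm{w}}$.
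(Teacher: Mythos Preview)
Your proposal is correct and is exactly the approach the paper takes: the paper states the corollary without proof, as it follows immediately from the preceding lemma together with the identification $\Stab_{\bm{z},\bm{w}}\T_\C^d=\Ker(\T_\C^d \xrightarrow{A_{J_{\bm{z},\bm{w}}}^T}\T_\C^{|J_{\bm{z},\bm{w}}|})$ given just before it. Your translation of surjectivity of $A_J$ (over $\Z$ or $\Q$) into the column-span conditions is precisely the intended deduction.
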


To consider when $\T_\C^d$ acts on $M^{\alpha-st}$ freely, we define the following. 
\begin{definition}\label{def:unimodular}
In the setting above, $A$ is {\it unimodular} if all $d\times d$-minors of $A$ are 0 or $\pm1$. Clearly, this is equivalent to that $B$ is unimodular (cf.\ \cite[Definition 2.8]{HS}). 
\end{definition}
\begin{remark}\label{rem:unimodular}
By definition, for $J\subseteq[n]$ and a unimodular matrix $A$, if $\sum_{j\in J}{\Q\bm{a_j}}=\Q^d$, then $\sum_{j\in J}{\Z\bm{a_j}}=\Z^d$. 
\end{remark}
In this paper, we always assume that $A$ is a unimodular matrix {unless otherwise stated}. 
Next, consider the hyperplane arrangement $\calH_A$ in $\R^d$ defined as the following. 
\[\calH_A:=\{H{\subset \R^d} \ | \ H \ \text{is generated by some $\bm{a_j}$'s and $\codim H=1$}\}.\]
We say $\alpha\in\Z^d$ is {\it generic} if $\alpha\notin\bigcup_{H\in\calH_A}{H}$. As we see in the following, for a generic $\alpha$, we have $M^{\alpha-ss}=M^{\alpha-st}$, so the categorical quotient $M^{\alpha-ss}/\hspace{-3pt}/\T_\C^d$ is the geometric quotient $M/\T_\C^d$ (cf.\ \cite[Proposition 3.6 (4)]{Kocohomology}).

\begin{lemma}\label{lem:stable set HT}
For any $\alpha \in \Z^d$ and $\xi \in\C^d$, we have $(\mu^{-1}(\xi))^{\alpha-ss}\neq\phi$. Moreover, if $\alpha$ is generic, then $(\mu^{-1}(\xi))^{\alpha-ss}=(\mu^{-1}(\xi))^{\alpha-st}$ and  $(\C^{2n})^{\alpha-ss}=(\C^{2n})^{\alpha-st}$. 
Furthermore, 
the $\T_\C^d$-action on $(\C^{2n})^{\alpha-st}$ will be free. 
\end{lemma}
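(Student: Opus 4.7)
\noindent\textit{Proof plan.} The whole statement reduces to the cone characterisation of $(\C^{2n})^{\alpha-ss}$ recalled above combined with unimodularity of $A$ and Corollary \ref{cor:stabilizer}. For non-emptiness, I use that by unimodularity there is $J\subseteq[n]$ with $|J|=d$ such that $\{\bm{a_j}\}_{j\in J}$ is a $\Z$-basis of $\Z^d$. Writing $\alpha=\sum_{j\in J}c_j\bm{a_j}$ and $\xi=\sum_{j\in J}\xi_j\bm{a_j}$, I construct a point $(\bm z,\bm w)$ supported on $J$ by the recipe: $z_j=1$, $w_j=\xi_j$ if $\xi_j\neq 0$; $z_j=1$, $w_j=0$ if $\xi_j=0$ and $c_j\geq 0$; $z_j=0$, $w_j=1$ if $\xi_j=0$ and $c_j<0$. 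A direct check shows $\mu(\bm z,\bm w)=\xi$, and in each case $c_j\bm{a_j}$ sits in the cone on the right-hand side of the characterisation, so $(\bm z,\bm w)\in(\mu^{-1}(\xi))^{\alpha-ss}$.

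The key step for the generic case is the following convex-geometric claim: if $\alpha$ is generic and $(\bm z,\bm w)\in(\C^{2n})^{\alpha-ss}$, then the cone $C_{\bm z,\bm w}:=\sum_{z_i\neq 0}\Q_{\geq 0}\bm{a_i}+\sum_{w_i\neq 0}\Q_{\geq 0}(-\bm{a_i})$ has full dimension $d$ and $\alpha$ lies in its relative interior. Otherwise $\alpha$ would sit in a proper subspace $V\subsetneq\R^d$ spanned by vectors $\pm\bm{a_j}$ with $j\in J_{\bm z,\bm w}$; extending this spanning family inside $[n]$ to one of dimension $d-1$---possible because $\{\bm{a_j}\}_{j\in[n]}$ spans $\R^d$---yields a hyperplane $H\in\calH_A$ through $\alpha$, contradicting genericity. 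Combined with unimodularity (Remark \ref{rem:unimodular}), full-dimensionality of $C_{\bm z,\bm w}$ forces $\sum_{j\in J_{\bm z,\bm w}}\Z\bm{a_j}=\Z^d$, so Corollary \ref{cor:stabilizer}(2) yields $\Stab_{\bm z,\bm w}\T_\C^d=1$; this simultaneously gives freeness of the $\T_\C^d$-action on the stable set.

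It remains to verify orbit-closedness. I will run a Hilbert--Mumford argument: a non-closed $\T_\C^d$-orbit in the semistable locus produces a one-parameter subgroup with non-zero cocharacter $v\in\Z^d$ whose limit $(\bm z',\bm w')$ lies in a strictly different orbit (the stabiliser being trivial), is still $\alpha$-semistable, and satisfies $J_{\bm z',\bm w'}\subseteq\{j\in J_{\bm z,\bm w}:\langle v,\bm{a_j}\rangle=0\}$. Semistability of the limit then forces $\alpha\in\sum_{j\in J_{\bm z',\bm w'}}\R\bm{a_j}\subseteq v^\perp$, a proper subspace of $\R^d$, and the same extension trick produces a hyperplane of $\calH_A$ through $\alpha$, again contradicting genericity. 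Hence $(\C^{2n})^{\alpha-ss}=(\C^{2n})^{\alpha-st}$, and intersecting with the $\T_\C^d$-invariant subvariety $\mu^{-1}(\xi)$ yields the analogous equality there. The main obstacle is the uniform convex-geometric ``extend to a hyperplane of $\calH_A$'' step which must handle every boundary configuration of $C_{\bm z,\bm w}$ at once; with that lemma in hand the rest of the argument is essentially bookkeeping in GIT.
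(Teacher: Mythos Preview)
Your non-emptiness argument and the trivial-stabiliser argument are correct and match the paper's route almost exactly (the paper writes $\xi$ and $\alpha$ directly as combinations of the $\bm{a_j}$ rather than first choosing a $\Z$-basis via unimodularity, but this is cosmetic; your ``extend to a hyperplane of $\calH_A$'' step is exactly how the paper reads genericity).

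The divergence is in the orbit-closedness step. The paper does not invoke genericity of $\alpha$ a second time: having already shown that every $\alpha$-semistable point has trivial stabiliser, it simply observes that if $\T_\C^d\cdot x$ were not closed in the semistable locus one could pick a semistable $y\in\overline{\T_\C^d\cdot x}\setminus\T_\C^d\cdot x$; the closed orbit lemma then gives $\dim\T_\C^d\cdot y<\dim\T_\C^d\cdot x$, yet both stabilisers are trivial so both orbits have dimension $d$ --- contradiction. This is a two-line argument once the stabiliser statement is in hand.

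Your Hilbert--Mumford route can be made to work, but the clause ``is still $\alpha$-semistable'' is not automatic and is where the actual content hides. A one-parameter limit of a semistable point need not be semistable: already for $A=(1)$ acting on $\C^2$, the point $(1,0)$ is $1$-semistable but $\lim_{t\to 0}(t,0)=(0,0)$ is not. What you need is the structure theorem for torus orbit closures in affine space (every $\T_\C^d$-orbit in $\overline{\T_\C^d\cdot x}$ is reached as the limit along \emph{some} one-parameter subgroup), so that you may choose $v$ with limit in the orbit of the specific semistable boundary point you started from. With that cited your argument goes through; without it the step is a gap. Either way, the paper's closed-orbit-lemma argument is shorter and sidesteps the issue entirely.
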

\begin{proof}
First, we will show that $(\mu^{-1}(\xi))^{\alpha-ss}\neq\phi$ for any $\alpha \in \Z^d$ and $\xi \in\C^d$. Note that we can express $\xi=\sum_{j\in J}{c_j\bm{a_j}}$, where $J:=\{j \in[n] \ | \ c_j\neq0\}$, and $\alpha=\sum_{j\in J^+}{c'_j\bm{a_j}}+\sum_{j\in J^-}{c'_j\bm{a_j}}$, where $J^\pm:=\{j\in[n] \ | \ \pm c'_j>0\}$ respectively. Then one can easily show that there exists $(\bm{z}, \bm{w})\in \C^{2n}$ such that $z_jw_j=c_j \ (j \in J)$, $z_j\neq0, w_j=0 \ (j \in J^+\cap J^c)$, and $z_j=0, w_j\neq0 \ (j \in J^-\cap J^c)$, where $J^c:=[n]-J$. From (*), this $(\bm{z}, \bm{w})$ is an element of $(\mu^{-1}(\xi))^{\alpha-ss}$. Hence, we have $(\mu^{-1}(\xi))^{\alpha-ss}\neq\phi$.   

Next, we assume that $\alpha$ is generic. {W}e want to show that $(\C^{2n})^{\alpha-ss}=(\C^{2n})^{\alpha-st}$ and $(\mu^{-1}(\xi))^{\alpha-ss}=(\mu^{-1}(\xi))^{\alpha-st}$. Since $\mu$ is $\T_\C^d$-invariant, so it is sufficient to prove $(\C^{2n})^{\alpha-ss}=(\C^{2n})^{\alpha-st}$. Then we only have to show that the stabilizer group of $\T_\C^d$ at each point in $(\C^{2n})^{\alpha-ss}$ is finite. In fact, if we can prove this but for some $x\in(\C^{2n})^{\alpha-ss}$, the $\T_\C^d$-orbit $\T_\C^d\cdot x$ is not closed, then we can take an element $y\in\overline{\T_\C^d\cdot x}-\T_\C^d\cdot x$, where $\overline{\T_\C^d\cdot x}$ is the Zariski closure. Also, by the closed orbit lemma (\cite[section 8.3 Proposition]{Hum}), we have $\dim \T_\C^d\cdot y<\dim \T_\C^d\cdot x$. However, by the assumption, $\dim {\Stab_{x}\T_\C^d}=\dim {\Stab_{y}\T_\C^d}=0$, so it should be that $\dim \T_\C^d\cdot y=\dim \T_\C^d\cdot x=\dim \T_\C^d$. This is a contradiction.  

To show that the stabilizer group of $\T_\C^d$ at each point in $(\C^{2n})^{\alpha-ss}$ is finite, we note that $\alpha\in\sum_{j\in J_{\bm{z}, \bm{w}}}{\Q\bm{a_j}}$ holds for $(\bm{z}, \bm{w})\in(\C^{2n})^{\alpha-ss}$ by (*). 
On the other hand, $\alpha$ is generic, so $\alpha$ cannot be in any hyperplanes in $\calH_A$. Since this implies that $\sum_{j\in J_{\bm{z}, \bm{w}}}{\Q\bm{a_j}}=\Q^d$, the stabilizer group at each point in $(\C^{2n})^{\alpha-ss}$ is finite by Corollary \ref{cor:stabilizer} (1). Moreover, $A$ is unimodular, {so} by Remark \ref{rem:unimodular}, we have $\sum_{j\in J_{\bm{z}, \bm{w}}}{\Z\bm{a_j}}=\Z^d$. Thus the stabilizer group at each point in $(\C^{2n})^{\alpha-ss}$ is trivial by Corollary \ref{cor:stabilizer} (2).
\end{proof}
Then, by the symplectic quotient construction, we have the following (see for example, \cite[Theorem 9.53]{Kir}). 
\begin{corollary}\label{cor:hypertoricsymplectic}
We assume that 
$\alpha\in\Z^d$ is generic. Then  $X(A, \alpha)$ is a $2n-d$ dimensional smooth Poisson variety, and for any $\xi\in\C^d$, $Y(A, (\alpha, \xi))$ is a $2n-2d$  dimensional smooth symplectic variety. 
\end{corollary}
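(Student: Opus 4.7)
The plan is to combine Lemma \ref{lem:stable set HT} with standard facts from GIT and symplectic reduction. For $X(A,\alpha)$, Lemma \ref{lem:stable set HT} gives $(\C^{2n})^{\alpha-ss}=(\C^{2n})^{\alpha-st}$ with $\T_\C^d$ acting freely; since the stable locus is an open subscheme of the smooth variety $\C^{2n}$ and the free action of a reductive group on stable locus yields a geometric quotient that is a scheme, $X(A,\alpha)=(\C^{2n})^{\alpha-st}/\T_\C^d$ is smooth of dimension $2n-d$. For the Poisson structure, I would note that $\C^{2n}$ carries the canonical Poisson structure coming from $\om_\C$, that $\T_\C^d$ acts by symplectomorphisms (so by Poisson automorphisms), hence the invariant functions form a Poisson subalgebra, and this bracket descends along the affine GIT chart $\Spec\C[U]^{\T_\C^d}$ for each invariant open $U\subseteq(\C^{2n})^{\alpha-st}$ to give a Poisson structure on $X(A,\alpha)$.

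For $Y(A,(\alpha,\xi))$, Lemma \ref{lem:stable set HT} again gives $(\mu^{-1}(\xi))^{\alpha-ss}=(\mu^{-1}(\xi))^{\alpha-st}\neq\emptyset$ with free $\T_\C^d$-action. The crucial point is that $\mu$ is a submersion on $(\C^{2n})^{\alpha-st}$: the moment-map identity $d\mu_p(v)\cdot X = \om_\C(X_p,v)$ (for $X\in\mathrm{Lie}\T_\C^d$ and $X_p$ the generated vector field) identifies the transpose of $d\mu_p$ with the infinitesimal action $\mathrm{Lie}\T_\C^d\to T_p\C^{2n}$; the latter is injective precisely when the stabilizer of $p$ has trivial Lie algebra, which holds by freeness. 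Therefore $(\mu^{-1}(\xi))^{\alpha-st}$ is a smooth variety of pure dimension $2n-d$, and its geometric quotient by the free $\T_\C^d$-action is smooth of dimension $2n-2d$. The symplectic form on $Y(A,(\alpha,\xi))$ is produced by Marsden--Weinstein reduction: the pullback of $\om_\C$ to $(\mu^{-1}(\xi))^{\alpha-st}$ has kernel at each point exactly equal to the tangent space of the $\T_\C^d$-orbit (by the same moment-map identity plus a dimension count), so it descends to a nondegenerate closed $2$-form on the quotient.

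The main obstacle, if one wants a fully algebraic argument, is making the descent of forms/brackets rigorous in the algebraic category rather than quoting the classical $C^\infty$ symplectic-reduction theorem. The cleanest route I would take is to work locally: cover $(\mu^{-1}(\xi))^{\alpha-st}$ by $\T_\C^d$-invariant affine opens admitting étale slices (guaranteed by Luna's slice theorem for the reductive free action), so that étale-locally the quotient looks like $\mathrm{slice}\times\T_\C^d\to\mathrm{slice}$; then the descent of the symplectic form (resp.\ Poisson bracket) is immediate because invariant forms in the kernel of the orbit directions correspond to pullbacks from the slice. All the smoothness and freeness hypotheses needed for these slice charts are precisely what Lemma \ref{lem:stable set HT} provides, so once the slice setup is in place the rest is formal.
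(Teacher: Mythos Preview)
Your proposal is correct and follows essentially the same approach as the paper: the paper simply invokes the symplectic quotient construction (citing \cite[Theorem 9.53]{Kir}) after Lemma~\ref{lem:stable set HT} guarantees that the semistable and stable loci coincide with free $\T_\C^d$-action, which is exactly what you spell out in detail. Your explicit treatment of the submersion property of $\mu$ via the moment-map identity and the descent of the symplectic form via Marsden--Weinstein reduction is precisely the content of that citation, so there is no substantive difference between the two arguments.
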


Next, we will show that $Y(A, (0, \xi))$ is a symplectic variety and $\pi_\xi : Y(A, (\alpha, \xi)) \to Y(A, (0, \xi))$ is a projective symplectic resolution. 
\begin{lemma}\label{lem:CI}
For any $\xi\in\C^d$, ${\mu}^{-1}(\xi)$ is a $2n-d$ dimensional complete intersection in $\C^{2n}$. 
\qed\end{lemma}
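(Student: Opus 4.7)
The plan is to deduce the complete-intersection property from a dimension count. Once I show that $\dim \mu^{-1}(\xi) = 2n-d$ for every $\xi \in \C^d$, the fact that $\C[\bm{z},\bm{w}]$ is Cohen--Macaulay and that $\mu^{-1}(\xi)$ is defined by the $d$ polynomials $\mu_k-\xi_k = \sum_{j=1}^n a_{kj} z_j w_j - \xi_k$ automatically forces these equations to form a regular sequence, which is exactly the complete-intersection property. Krull's principal ideal theorem supplies the lower bound $\dim \mu^{-1}(\xi) \geq 2n-d$ for free, so the content is entirely in the reverse inequality.

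For the upper bound I would stratify $\C^{2n}$ by the supports of $\bm{z}$ and $\bm{w}$: for $S,T \subseteq [n]$, set
\[U_{S,T} := \bigl\{(\bm{z},\bm{w}) \in \C^{2n} \;:\; z_j \neq 0 \Leftrightarrow j \in S,\ w_j \neq 0 \Leftrightarrow j \in T\bigr\}.\]
Writing $R := S \cap T$ and $r := \rank A_R$, on $U_{S,T}$ the moment map reduces to $\mu(\bm{z},\bm{w}) = \sum_{j \in R} z_j w_j \bm{a_j}$, so $\mu^{-1}(\xi) \cap U_{S,T}$ is empty unless $\xi \in \Span_\C\{\bm{a_j} : j \in R\}$. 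In the nonempty case a direct parameter count---the tuple $(z_j w_j)_{j \in R}$ ranges over an affine-linear subspace of $(\Cstar)^R$ of dimension $|R|-r$, while the fiber of $(\bm{z},\bm{w}) \mapsto (z_j w_j)_{j \in R}$ over $U_{S,T}$ has dimension $|S|+|T|-|R|$ (one parameter for every nonzero coordinate, minus the $|R|$ product constraints)---gives $\dim(\mu^{-1}(\xi) \cap U_{S,T}) = |S|+|T|-r$.

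What remains, and is the main substantive ingredient, is the combinatorial inequality $|S|+|T|-r \leq 2n-d$ on every nonempty stratum. I would extract it from the two elementary bounds $|S|+|T| = |S \cup T| + |R| \leq n+|R|$ and $|R|-r \leq n-d$, the second holding because the $n-|R|$ columns of $A$ outside $R$ must, together with $A_R$, span the full rank-$d$ column space and hence contribute at least $d-r$ to the rank. Adding these two inequalities yields the claim; maximising over $S,T$ then gives $\dim \mu^{-1}(\xi) \leq 2n-d$, and the Cohen--Macaulay argument from the first paragraph upgrades the resulting equality $\dim \mu^{-1}(\xi) = 2n-d$ to the complete-intersection statement.
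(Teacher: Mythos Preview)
Your argument is correct, but it differs substantially from the paper's. The paper factors the moment map as $\mu = A\circ\Psi$, where $\Psi:\C^{2n}\to\C^n$ sends $(\bm z,\bm w)$ to $(z_1w_1,\ldots,z_nw_n)$, observes that $\Psi$ is flat (each factor $\C^2\to\C$, $(z,w)\mapsto zw$, is a torsion-free module over a PID), and then invokes the standard dimension formula for flat morphisms: since $A^{-1}(\xi)$ is an affine subspace of dimension $n-d$ and each fibre of $\Psi$ has dimension $n$, every component of $\mu^{-1}(\xi)=\Psi^{-1}(A^{-1}(\xi))$ has dimension $(n-d)+n=2n-d$. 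This is a two-line reduction to linear algebra.

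Your route, by contrast, is a direct dimension count via the stratification by the supports $S$ of $\bm z$ and $T$ of $\bm w$, together with the combinatorial inequality $|S|+|T|-\rank A_{S\cap T}\le 2n-d$. This is entirely elementary (no flatness needed) and gives explicit control on each stratum, at the cost of a slightly longer argument and the need to verify the rank inequality $|R|-r\le n-d$. One small omission: you should note that $\mu^{-1}(\xi)$ is nonempty (e.g.\ by surjectivity of $A$), so that Krull's principal ideal theorem actually applies. Otherwise the proof is fine; the two approaches simply trade conceptual brevity for combinatorial explicitness.
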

\begin{proof}
Note that the moment map $\mu$ is the composition $\C^{2n}\xrightarrow{\Psi}\C^n\xrightarrow{A}\C^d$, where $\Psi(\bm{z}, \bm{w}):=\sum_{j}z_jw_j\bm{e_j}$ and $\bm{e_j}$ is the $j$-th standard unit vector. And, it can be easily showed that $\Psi$ is a flat morphism. 
Then, by $\dim A^{-1}(\xi)=\dim \Ker A=n-d$ and the basic property of flat morphisms (\cite[III Corollary 9.6]{Hartshorne}), we can show that the dimension of all irreducible components of $\mu^{-1}(\xi)=\Phi^{-1}(A^{-1}(\xi))$ is $(n-d)+n=2n-d$. Since $\mu^{-1}(\xi)$ is defined in $\C^{2n}$ as the vanishing locus of $d$ polynomials, we can deduce that ${\mu}^{-1}(\xi)$ is a $2n-d$ dimensional complete intersection in $\C^{2n}$. 
\end{proof}

\begin{proposition}\label{prop:birational}
We assume that $\bm{b_j}\neq0$ for every $j=1, \ldots, n$. Then, $Y(A, (0, \xi))$ is a normal (irreducible) variety for any $\xi\in\C^d$. Also, $({\mu}^{-1}(\xi))^{0-st}\neq \phi$, in particular, $\dim Y(A, (0, \xi))=2n-2d$. Moreover, if $\alpha$ is generic, then $\pi:Y(A, (\alpha, \xi))\to Y(A, (0, \xi))$ will be birational. 
\qed\end{proposition}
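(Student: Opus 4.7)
The plan is to deduce everything from the complete-intersection property of $\mu^{-1}(\xi)$ (Lemma~\ref{lem:CI}) combined with a concrete analysis of the open subset
\[
U_0 := \{(\bm{z},\bm{w})\in\mu^{-1}(\xi) : z_jw_j\neq 0 \text{ for all } j\}.
\]
I will establish, in order, normality of $\mu^{-1}(\xi)$, irreducibility of $\mu^{-1}(\xi)$, nonemptiness of the $0$-stable locus (and hence the dimension formula), and finally birationality of $\pi$.

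For normality: since $\mu(\bm{z},\bm{w})=\sum_j z_jw_j\bm{a_j}$, the Jacobian at $(\bm{z},\bm{w})$ has rank equal to $\rank\{\bm{a_j}:j\in J_{\bm{z},\bm{w}}\}$, so the singular locus of $\mu^{-1}(\xi)$ is stratified by subsets $K\subseteq[n]$ with $\rank A_{[n]\setminus K}<d$: the $K$-stratum sits inside $V_K\cap\mu^{-1}(\xi)$, where $V_K:=\{z_j=w_j=0\ \forall j\in K\}$, and has dimension $\leq 2(n-|K|)-\rank A_{[n]\setminus K}$. The hypothesis $\bm{b_j}\neq 0$ translates via (**) to the no-coloop condition $\rank A_{[n]\setminus\{j\}}=d$ for every $j$, whence an easy induction yields $\rank A_{[n]\setminus K}\geq d-|K|+1$ for every $|K|\geq 1$. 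Substituting bounds each stratum dimension by $2n-d-|K|-1\leq 2n-d-2$, giving codimension $\geq 2$. Combined with Cohen--Macaulayness of the complete intersection, Serre's criterion $(R_1)+(S_2)$ yields normality of $\mu^{-1}(\xi)$.

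For irreducibility: factor $\mu = A\circ\Psi$ with $\Psi(\bm{z},\bm{w})=(z_jw_j)_j$. The assumption $\bm{b_j}\neq 0$ says $\Ker A\not\subseteq\{u_j=0\}$ for each $j$, so each hyperplane section $A^{-1}(\xi)\cap\{u_j=0\}$ of $A^{-1}(\xi)$ is either empty or proper; hence their union is proper and $A^{-1}(\xi)\cap(\Cstar)^n$ is a nonempty dense open subset of the irreducible affine space $A^{-1}(\xi)$. As each fibre $\Psi^{-1}(\bm{u})$ for $\bm{u}\in(\Cstar)^n$ equals $(\Cstar)^n$, the set $U_0=\Psi^{-1}(A^{-1}(\xi)\cap(\Cstar)^n)$ is irreducible of dimension $2n-d$. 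A coordinate-wise lift of an arbitrary path $\bm{u}(t)\in A^{-1}(\xi)$ with $\bm{u}(t)\in(\Cstar)^n$ for $t\neq 0$ shows $\overline{U_0}=\mu^{-1}(\xi)$ (elementary in the three cases according to whether $z_j,w_j$ vanish or not). Hence $\mu^{-1}(\xi)$ is irreducible; since $\T_\C^d$ is reductive, $Y(A,(0,\xi))=\Spec\C[\mu^{-1}(\xi)]^{\T_\C^d}$ is normal and irreducible.

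For the stable locus and birationality: for $(\bm{z},\bm{w})\in U_0$ one has $J_{\bm{z},\bm{w}}=[n]$, so Corollary~\ref{cor:stabilizer} and unimodularity force the stabilizer to be trivial. Since each $z_jw_j$ is $\T_\C^n$-invariant, any orbit limit stays in $\Psi^{-1}(\Psi(\bm{z},\bm{w}))\cong(\Cstar)^n$, within which the orbit is a coset of the closed subtorus $A^T(\T_\C^d)$, hence closed. Thus $U_0\subseteq(\mu^{-1}(\xi))^{0-st}$, and freeness yields $\dim Y(A,(0,\xi))=2n-2d$. For generic $\alpha$, the cone $\sum_{z_i\neq 0}\Q_{\geq 0}\bm{a_i}+\sum_{w_i\neq 0}\Q_{\geq 0}(-\bm{a_i})$ equals $\Q^d$ at any point of $U_0$, so (*) gives $U_0\subseteq(\mu^{-1}(\xi))^{\alpha-ss}=(\mu^{-1}(\xi))^{\alpha-st}$ by Lemma~\ref{lem:stable set HT}. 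Therefore $U_0/\T_\C^d$ sits as a common dense open subset of $Y(A,(\alpha,\xi))$ and $Y(A,(0,\xi))$ on which $\pi$ is the identity, proving that $\pi$ is birational. The main obstacle is the codimension estimate in the normality step, which rests on the matroid-theoretic inequality $\rank A_{[n]\setminus K}\geq d-|K|+1$ under the no-coloop hypothesis.
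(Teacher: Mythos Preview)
Your proof is correct and in several places more self-contained than the paper's.

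For normality, both arguments bound the codimension of the singular locus via Serre's criterion. The paper simply observes that any singular point lies in some $\{z_i=w_i=0\}$, and that $\mu^{-1}(\xi)\cap\{z_i=w_i=0\}=(\mu^{(i)})^{-1}(\xi)$ has dimension $2n-d-2$ by Lemma~\ref{lem:CI} applied to the surjective $A^{(i)}$. Your stratification by general $K$ and the inequality $\rank A_{[n]\setminus K}\geq d-|K|+1$ are correct but more than is needed; the case $|K|=1$ already suffices.

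For irreducibility, the paper invokes Knop's connectedness theorem for algebraic moment maps and then deduces irreducibility from normality. Your direct argument that $U_0$ is dense avoids this external input. One small caveat: in the case $z_j=w_j=0$ the coordinate-wise lift of a generic \emph{linear} path $\bm{u}+t\bm{v}$ fails, since $u_j(t)=tv_j$ cannot be written as a product of two regular functions both vanishing at $t=0$. Replacing the path by $\bm{u}+t^2\bm{v}$ (so that $z_j(t)=t$, $w_j(t)=tv_j$ works) fixes this; alternatively, since $\mu^{-1}(\xi)\cap\{z_j=0\}\cong\C_{w_j}\times(\mu^{(j)})^{-1}(\xi)$ has dimension $2n-d-1$, every $(2n-d)$-dimensional component of the complete intersection $\mu^{-1}(\xi)$ must meet $U_0$, which gives density without any path-lifting.

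For orbit closedness on $U_0$, the paper runs the Hilbert--Mumford criterion via one-parameter subgroups, while your observation that the orbit is a coset of the closed subtorus $A^T(\T_\C^d)$ inside the closed fibre $\Psi^{-1}(\Psi(\bm{z},\bm{w}))\cong(\Cstar)^n$ is shorter. The conclusion of birationality is then identical: $U_0/\T_\C^d$ is a common dense open on which $\pi$ is the identity.
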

\begin{proof}
First, under the assumption, we will show that the singular locus $({\mu}^{-1}(\xi))_{\Sing}$ is at least codimension 2 in ${\mu}^{-1}(\xi)$.
Note that the tangent map of $\mu$ at $(\bm{z}, \bm{w})$\\ 
$d\mu_{(\bm{z}, \bm{w})}$ $=(w_1\bm{a_1}, \ldots ,w_n\bm{a_n}, z_1\bm{a_1}, \ldots ,z_n\bm{a_n})$ is not surjective for $(\bm{z}, \bm{w}) \in ({\mu}^{-1}(\xi))_{\Sing}$. However, by the surjectivity of $A$, there exists $i$ such that $z_i=w_i=0$. Then,   
\[({\mu}^{-1}(\xi))_{\Sing}\subseteq\bigcup_{i=1}^n\left(({\mu}^{-1}(\xi))\cap\{z_i=w_i=0\}\right).\]
From the lemma above, we have $\dim \mu^{-1}(\xi)=2n-d$. Thus, it is enough to show that for each $i$, $\dim ({\mu}^{-1}(\xi)\cap\{z_i=w_i=0\})\leq2n-2-d$. To prove this, we consider the $d\times(n-1)$-matrix ${A^{(i)}}$ which is obtained by deleting the $i$-th column vector from $A$, and we note the following easy claim.  
\[A^{(i)}:\Z^{n-1}\to \Z^d \ \text{is surjective if and only if }\ \bm{b_i}\neq0.\]
Since the assumption implies that $A^{(i)}$ is surjective, we can consider the moment map $\mu^{(i)}:\C^{2(n-1)}\to \C^d$ corresponding to $A^{(i)}$. Also, it can be easily showed that $\mu^{(i)}$ is given by $\mu^{(i)}(z_1, \ldots, \hat{z_i}, \ldots ,z_n, w_1, \ldots ,\hat{w_i}, \ldots ,w_n)=\sum_{j\neq i}{z_jw_j\bm{a_j}}$ through the identification $\C^{2(n-1)}=\C^{2n}\cap\{z_i=w_i=0\}$. In particular, we have $(\mu^{(i)})^{-1}(\xi)=\mu^{-1}(\xi)\cap\{z_i=w_i=0\}$. Since we have $\dim (\mu^{(i)})^{-1}(\xi)=2n-2-d$ by the lemma above, we have $\codim_{{\mu}^{-1}(\xi)}({\mu}^{-1}(\xi))_{\Sing}\geq2$.  

Next, we want to show that $\mu^{-1}(\xi)/\hspace{-3pt}/\T_\C^d$ is normal and irreducible. By Lemma \ref{lem:CI}, ${\mu}^{-1}(\xi)$ is a complete intersection, so this is Cohen--Macaulay. Since we showed that $\codim_{{\mu}^{-1}(\xi)}({\mu}^{-1}(\xi))_{\Sing}\geq2$, we can show that ${\mu}^{-1}(\xi)$ is normal by Serre's normality criterion (\cite[Theorem 2.2.22]{BH}). 
On the other hand, by applying Knop's Theorem (\cite{Knop}) to this case, each fiber $\mu^{-1}(\xi)$ is connected. 
Thus, $\mu^{-1}(\xi)$ is irreducible. Moreover, in general, the invariant ring of an integrally closed domain by algebraic reductive group is integrally closed. Hence, $Y(A, (0, \xi))=\Spec \C[{\mu}^{-1}(\xi)]^{\T_\C^d}$ is a normal irreducible algebraic variety. 

Finally, we will show that $({\mu}^{-1}(\xi))^{0-st}$ is not empty, and $\pi$ is birational. Note that for any $k$, $z_kw_k$ is not in the defining ideal $I_\xi=\langle\{\xi_j-\sum_{i}{z_iw_ia_{ji}}\}\rangle_{j=1, \ldots ,n}\subset\C[\bm{z}, \bm{w}]$ of $\mu^{-1}(\xi)$. In fact, if $z_kw_k\in I_\xi$, then $z_k\in I_\xi$ or $w_k \in I_\xi$, since $I_\xi$ is prime ideal. However, one can easily show that $z_k\notin I_\xi$ and $w_k\notin I_\xi$ by the surjectivity of $A$ (more precisely, by the injectivity of $A^T$). 
{In fact, first, we can take a $\bm{v}=(v_1, \ldots, v_n)^T$ such that $(\xi_1, \ldots, \xi_d)=A\bm{v}$ by the surjectivity of $A$. If $z_k\in{I_\xi}$, then there exist polynomials $p_j(\bm{z}, \bm{w})$ satisfying}
\[z_k=\sum_{j=1}^d{p_j\left(\sum_{i=1}^n{(v_i-z_iw_i)a_{ji}} \right)}.\]
{However, this is a contradiction since if we substitute $\fr<v_i/z_i>$ for $w_i$ formally, then the right hand side is 0.} 


Thus, for each $k$, $z_kw_k=0$ defines an effective divisor $D_k$ in $\mu^{-1}(\xi)$. Now, we consider the $\T_\C^d$-invariant open subset $U:=(\mu^{-1}(\xi)\setminus\bigcup_{k}{D_k})\cap\mu^{-1}(\xi)^{\alpha-st}$ of $\mu^{-1}(\xi)$, where $\alpha$ is a generic element. 
Now, we want to show $U\subseteq\mu^{-1}(\xi)^{0-st}$. Since the stabilizer group of $\T_\C^d$ at any point in $U$ is finite, we only have to show that for any point $x$ in $U$, the $\T_\C^d$-orbit of $x$ is closed in $\mu^{-1}(\xi)=\mu^{-1}(\xi)^{0-ss}$. For any 1-parameter subgroup $\vphi_{\bm{v}}:\Cstar\to\T_\C^d \ : \ t\mapsto (t^{v_1}, \ldots ,t^{v_d})$, this acts on $\C^{2n}$ as $z_i\mapsto t^{u_i}z_i$, $w_i\mapsto t^{-u_i}w_i$, where $(u_1, \ldots ,u_n)=A^T\bm{v}$ and $A^T : (\Z^d)^* \to (\Z^n)^*$. By the injectivity of $A^T$, there exists $i$ such that $u_i\neq0$. By the definition of $U$, $z_i(x)\neq0$ and $w_i(x)\neq0$, so $\lim_{t\to\infty}\vphi_{\bm{v}}(t)\cdot x$ doesn't exist. By the Hilbert--Mumford's criterion, this implies that the orbit $\T_\C^d\cdot x$ is closed in $\mu^{-1}(\xi)$, so $U\subseteq\mu^{-1}(\xi)^{0-st}$. Then, this proves $(({\mu}^{-1}(\xi))^{0-st}\cap ({\mu}^{-1}(\xi))^{\alpha-st}){\supseteq} U\neq\emptyset$,  in particular, $({\mu}^{-1}(\xi))^{0-st}$ is not empty. 
Then, $\pi: Y(A, (\alpha, \xi))=({\mu}^{-1}(\xi))^{\alpha-st}/\T_\C^d \to Y(A, (0, \xi))=({\mu}^{-1}(\xi))^{0-st}/\hspace{-3pt}/\T_\C^d$ is the identity map on $(({\mu}^{-1}(\xi))^{0-st}\cap ({\mu}^{-1}(\xi))^{\alpha-st})/\T_\C^d$. In particular, $\pi$ is a birational morphism.   
\end{proof}

\begin{remark}
Since $\C^{2n}$ is clearly Cohen--Macaulay, so by the same argument above, one can prove that $X(A, 0)$ is a normal irreducible Poisson variety. Also, $(\C^{2n})^{0-st}\neq \phi$, in particular, $\dim X(A, 0)=2n-d$. Moreover, if $\alpha$ is generic, then $\Pi:X(A, \alpha) \to X(A, 0)$ can be a $\Cstar$-equivariant birational morphism. 
\end{remark}

\begin{proposition}\label{prop:resolution}
Let $({Y}, \om_0)$ be a nonsingular symplectic variety, and $Y_0$ be a normal algebraic variety. If a morphism $\pi:{Y} \to Y_0$ is a projective birational morphism, then $\pi$ will be a symplectic resolution. In particular, $Y_0$ is a symplectic variety.  
\qed\end{proposition}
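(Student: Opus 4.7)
The plan is to construct a holomorphic symplectic form $\bar{\om}_0$ on the smooth locus $(Y_0)_{\mathrm{sm}}$ such that $\pi^{*}\bar{\om}_0$ agrees with $\om_0$ wherever both are defined; this will simultaneously exhibit $Y_0$ as a symplectic variety in the sense of Beauville and $\pi$ as a symplectic resolution.

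The first step is to locate the isomorphism locus. Since $\pi : Y \to Y_0$ is projective and birational and $Y_0$ is normal, a standard consequence of Zariski's Main Theorem produces an open subset $U \subseteq Y_0$ with $\codim_{Y_0}(Y_0 \setminus U) \geq 2$ on which $\pi$ restricts to an isomorphism $\pi^{-1}(U) \xrightarrow{\sim} U$. Intersecting with the smooth locus, the open set $U \cap (Y_0)_{\mathrm{sm}} \subseteq (Y_0)_{\mathrm{sm}}$ still has complement of codimension $\geq 2$, and on it we define $\bar{\om}_0$ by transporting $\om_0|_{\pi^{-1}(U)}$ through the inverse isomorphism. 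Since $(Y_0)_{\mathrm{sm}}$ is smooth, $\Om^2_{(Y_0)_{\mathrm{sm}}}$ is locally free, so by Hartogs' extension theorem $\bar{\om}_0$ extends uniquely to a global section of $\Om^2_{(Y_0)_{\mathrm{sm}}}$. The extension remains closed, since $d\bar{\om}_0$ is a section of $\Om^3_{(Y_0)_{\mathrm{sm}}}$ that vanishes on the dense open $U \cap (Y_0)_{\mathrm{sm}}$, and it remains non-degenerate: setting $2n = \dim Y$, the top power $\bar{\om}_0^{\wedge n}$ is a section of the line bundle $K_{(Y_0)_{\mathrm{sm}}}$ whose zero locus, necessarily an effective divisor, is contained in the codimension $\geq 2$ set $(Y_0)_{\mathrm{sm}}\setminus U$ and hence must be empty.

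To conclude, $\pi$ is a resolution because $Y$ is smooth and $\pi$ is projective birational. By construction, $\pi^{*}\bar{\om}_0$ and $\om_0$ coincide on the dense open $\pi^{-1}(U \cap (Y_0)_{\mathrm{sm}}) \subseteq Y$, so $\om_0$ itself provides the holomorphic extension of $\pi^{*}\bar{\om}_0$ to all of $Y$. Hence $Y_0$ is a symplectic variety with form $\bar{\om}_0$, and the resolution $\pi$ is symplectic since this extension is symplectic on $Y$ by hypothesis. The main technical input of the argument is the codimension-$2$ statement about the exceptional locus; the remaining verifications are standard Hartogs-type extensions on the smooth locus.
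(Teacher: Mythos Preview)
Your proof is correct and follows essentially the same route as the paper: both use Zariski's Main Theorem to obtain an open $U\subseteq Y_0$ with complement of codimension $\geq 2$ on which $\pi$ is an isomorphism, transport $\om_0$ to $U$, extend by Hartogs to $(Y_0)_{\mathrm{sm}}$, verify non-degeneracy via the vanishing locus of the top power, and then observe that $\pi^*\bar{\om}_0$ agrees with $\om_0$ on a dense open and hence everywhere it is defined. The only substantive differences are that you explicitly check closedness of the extended form (the paper leaves this implicit), while the paper adds a final remark---not needed for the statement---that $\pi$ restricted over $(Y_0)_{\mathrm{sm}}$ is crepant and hence an isomorphism.
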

\begin{proof}

We consider the open subset $U_0:=\{y \in {Y_0} \ | \ \dim \pi^{-1}(y) =0\}$. Then, by the Zariski's Main Theorem, {$\codim_Y (Y-U_0)\geq 2$} and the restriction $\pi|_{\pi^{-1}(U_0)}: \pi^{-1}(U_0) \xrightarrow{\sim} U_0$ is an isomorphism. 
We have a symplectic form $\om_0$ on $Y$, so the symplectic form $\overline{\om}_0$ on $U_0$ is induced through the isomorphism $\pi|_{\pi^{-1}(U_0)}$. Since $\Om_{(Y_0)_{\text{sm}}}^2$ is a locally free sheaf, $\overline{\om}_0$ is uniquely extended to a closed 2-form on $(Y_0)_{\text{sm}}$ (we also denote this by $\overline{\om}_0$). Moreover, this 2-form is also a symplectic 2-form. In fact, $\wedge^{\dim Y_0}\overline{\om}_0$ defines a global section of $K_{(Y_0)_{\text{sm}}}$, so if $\overline{\om}_0$ is degenerate, then the vanishing locus of $\wedge^{\dim Y_0}\overline{\om}_0$ defines a divisor $D$ in $(Y_0)_{\text{sm}}$. However, we have {$\codim_{(Y_0)_{\text{sm}}}{((Y_0)_{\text{sm}}-U_0)}\geq2$}, so $D\cap U_0\neq\phi$. This contradicts to the nondegeneracy of $\overline{\om}_0$ on $U_0$. Thus $\overline{\om}_0$ is a symplectic form. 

Next, we consider the pull-back $\pi^*\overline{\om}_0$ of $\overline{\om}_0$ by $\pi$, which is a 2-form on $\pi^{-1}((Y_0)_{\text{sm}})$. Since this coincides with $\om_0|_{\pi^{-1}((Y_0)_{\text{sm}})}$ on the open dense subset $\pi^{-1}(U_0)$, they define the same 2-form on $\pi^{-1}((Y_0)_{\text{sm}})$, that is, $(\pi|)^*\overline{\om}_0=\om_0$, where $\pi|: \pi^{-1}((Y_0)_{\text{sm}}) \to (Y_0)_{\text{sm}}$ is a restriction of $\pi$. 
Then, by the nondegeneracy of symplectic form, we have $(\pi|)^*K_{(Y_0)_{\text{sm}}}=K_{\pi^{-1}((Y_0)_{\text{sm}})}$, i.e., $\pi|$ is crepant. 
Since $\pi|: \pi^{-1}((Y_0)_{\text{sm}}) \to (Y_0)_{\text{sm}}$ is a crepant projective birational morphism between nonsingular algebraic varieties, $\pi|$ is {an isomorphism}.   
\end{proof}


Then, we get the following maybe well-known theorem on hypertoric varieties. 
\begin{theorem}\label{thm:hyperfund}
For a unimodular $d\times n$-matrix $A$ and a generic $\alpha$, the following hold. 
\begin{itemize}
\item [(1)]$X(A, \alpha)$ is a $2n-d$ dimensional smooth Poisson variety, and $Y(A, \alpha)$ is a $2n-2d$ dimensional smooth symplectic variety.  
\item [(2)]For any $\xi\in\C^d$, $\pi_\xi: Y(A, (\alpha, \xi))\to Y(A, (0, \xi))$ is a projective symplectic resolution. Moreover, $\pi : Y(A, \alpha)\to Y(A, 0)$ is a conical projective symplectic resolution with respect to the conical $\Cstar$-action which is defined in Remark \ref{rem:comm diagram}.  
\end{itemize}
\end{theorem}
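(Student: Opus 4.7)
The plan is to assemble the preparatory results developed in this section --- nothing genuinely new is required beyond reading off weights for the conical structure.

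For part (1), I would invoke Lemma \ref{lem:stable set HT}: since $\alpha$ is generic, $(\C^{2n})^{\alpha-ss} = (\C^{2n})^{\alpha-st}$ and $\T_\C^d$ acts freely on this set, and the same holds after intersecting with $\mu^{-1}(\xi)$. Thus both $X(A,\alpha)$ and $Y(A,(\alpha,\xi))$ are geometric quotients of smooth varieties by a free torus action, hence smooth. The dimensions come from $\dim \C^{2n} - \dim \T_\C^d = 2n-d$ and, using Lemma \ref{lem:CI}, $\dim \mu^{-1}(\xi) - \dim \T_\C^d = 2n-2d$. The Poisson (resp.\ symplectic) structure descends from $(\C^{2n},\omega_\C)$ by symplectic reduction (\cite[Theorem 9.53]{Kir}). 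For the first assertion of (2), I would fix $\xi$ and combine three facts: (a) $Y(A,(\alpha,\xi))$ is smooth symplectic by (1); (b) Proposition \ref{prop:birational} supplies normality of $Y(A,(0,\xi))$ together with the birationality of $\pi_\xi$, while projectivity is built into the Proj construction; and (c) Proposition \ref{prop:resolution} then immediately promotes $\pi_\xi$ to a projective symplectic resolution.

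The remaining work is the conical structure for $\xi=0$, which amounts to checking Definition \ref{def:conical}. The $\Cstar$-action of Remark \ref{rem:comm diagram} commutes with $\T_\C^d$ and places each coordinate function $z_i, w_i$ in degree $1$ on $\C[\bm{z},\bm{w}]$. Each moment map component $\sum_i z_iw_i a_{ji}$ is homogeneous of degree $2$, so this grading descends to the $\T_\C^d$-invariants on $\mu^{-1}(0)$, yielding $R = \bigoplus_{i\geq 0} R_i$ with $R_0 = \C$ on the coordinate ring of $Y(A,0)$. Meanwhile $\omega_\C = \sum_j dz_j \wedge dw_j$ has $\Cstar$-weight $2$, and this weight is preserved under symplectic reduction, so $\overline{\omega}_0$ is homogeneous of positive weight $\ell = 2$. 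The $\Cstar$-equivariance of $\pi$ itself is already recorded in Remark \ref{rem:comm diagram}. The only step requiring a moment's thought is verifying that the $\Cstar$-action commutes with $\T_\C^d$ so that it genuinely descends to the symplectic quotient --- but this is immediate from the explicit formulas --- and everything else is bookkeeping.
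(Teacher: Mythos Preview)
Your proposal is correct and follows essentially the same approach as the paper: part (1) is exactly the content of Corollary \ref{cor:hypertoricsymplectic} (which you have unpacked into its ingredients, Lemma \ref{lem:stable set HT} and \cite[Theorem 9.53]{Kir}), the first assertion of (2) combines Proposition \ref{prop:birational} with Proposition \ref{prop:resolution}, and the conical structure is verified via $s^*\omega_\C = s^2\omega_\C$. Your write-up is in fact slightly more thorough than the paper's, which does not explicitly spell out the positivity of the grading $R_0=\C$ in checking Definition \ref{def:conical}.
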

\begin{proof}
(1) is already proved in Corollary \ref{cor:hypertoricsymplectic}. 

(2) The former part is deduced from Proposition \ref{prop:birational} and Proposition \ref{prop:resolution}. Then, we only have to show that $Y(A, 0)$ is a conical symplectic variety. The symplectic 2-form $\overline{\om}_0$ on $(Y(A, 0))_{\text{sm}}$ is naturally induced from the standard symplectic 2-form $\om_\C$ on $\C^{2n}$ (see Remark \ref{rem:comm diagram}). By definition, we have $s^*\om_\C=s^{2}\om_\C$ ($s\in\Cstar$), so $s^*\overline{\om}_0=s^2\overline{\om}_0$. Thus, $Y(A, 0)$ is a conical symplectic variety. 
\end{proof}

For the later convenience, we compute the coordinate ring of $X(A, 0)$ and $Y(A, 0)$. First, note the following exact sequence obtained from the original exact sequence. 
\[
\begin{tikzcd}
0\ar[r]&\Z^{2n-d}\ar[r, "\widehat{B}"] &\Z^{2n}\ar[r, "\widehat{A}"]&\Z^d\ar[r]&0         
\end{tikzcd},
\]
where 
\[\widehat{A}:= 
\begin{pmatrix}
&&&\\
\vcbig{A}&\vcbig{-A}\\
\end{pmatrix}, \ \ \ 
\widehat{B}:=
\left(\begin{array}{ll|rr}
&&&\\
\vbig{B}&&&\vbig{I_n}\\
\cline{1-4}
&&&\\
\vbig{0}&&&\vbig{I_n}
\end{array}\right).
\]
($\widehat{B}$ is called the {\it Lawrence lift} of $B$.)
\begin{lemma}{\label{ring str lem}}
\begin{align*}
\C[X(A, 0)]&=\C[z_1w_1,\ldots ,z_nw_n][f_{\bm{\beta}} \ | \ \bm{\beta}\in \Image B],
\end{align*}
\[\C[Y(A, 0)]=\C[z_1w_1,\ldots ,z_nw_n][f_{\bm{\beta}} \ | \ \bm{\beta}\in \Image B]/\langle\sum_{j=1}^n{z_jw_ja_{ij}} | i=1, \ldots ,d\rangle,
\] 
where we define $f_{\bm{\beta}}:=\prod_{i: \beta_i>0}{z_i^{\beta_i}}\prod_{i: \beta_i<0}{w_i}^{-\beta_i}$. 
\end{lemma}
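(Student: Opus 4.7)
My plan is to compute the invariant subring $\C[\bm z,\bm w]^{\T_\C^d}$ monomial-by-monomial and then descend to $Y(A,0)$ using reductivity of the torus. For the first statement, since the $\T_\C^d$-action is diagonal, the ring splits into weight spaces spanned by monomials. Under $\bm t\cdot z_j=\bm t^{\bm{a_j}}z_j$ and $\bm t\cdot w_j=\bm t^{-\bm{a_j}}w_j$, a monomial $\prod_i z_i^{p_i}w_i^{q_i}$ has weight $\sum_i(p_i-q_i)\bm{a_i}$, so it is invariant iff $\bm\beta:=\bm p-\bm q$ lies in $\Ker A=\Image B$ by exactness of (**). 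Hence $\C[X(A,0)]$ is the $\C$-span of these invariant monomials.

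Next I would decompose each invariant monomial index-by-index. For each $i$, if $\beta_i\geq 0$ then $p_i=q_i+\beta_i$ and $z_i^{p_i}w_i^{q_i}=z_i^{\beta_i}(z_iw_i)^{q_i}$; if $\beta_i<0$ then $q_i=p_i-\beta_i$ and $z_i^{p_i}w_i^{q_i}=w_i^{-\beta_i}(z_iw_i)^{p_i}$. Taking the product over $i$ gives
\[
\prod_i z_i^{p_i}w_i^{q_i}=f_{\bm\beta}\cdot\prod_i(z_iw_i)^{c_i},\qquad c_i\in\Z_{\geq 0}.
\]
This shows every invariant monomial lies in the subring generated by the $z_iw_i$'s and the $f_{\bm\beta}$ for $\bm\beta\in\Image B$; the reverse containment is clear since each of these generators is visibly invariant. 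This proves the first formula.

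For the second formula, I would use that $\T_\C^d$ is reductive, so the functor $(-)^{\T_\C^d}$ is exact on $\T_\C^d$-modules. The defining ideal $I:=\langle\sum_j z_jw_ja_{ij}\mid i=1,\ldots,d\rangle\subset\C[\bm z,\bm w]$ of $\mu^{-1}(0)$ is generated by $\T_\C^d$-invariants (each generator is a $\C$-linear combination of the $z_jw_j$'s), hence is a $\T_\C^d$-submodule. Applying $(-)^{\T_\C^d}$ to $0\to I\to\C[\bm z,\bm w]\to\C[\mu^{-1}(0)]\to 0$ yields
\[
\C[Y(A,0)]=\C[\mu^{-1}(0)]^{\T_\C^d}=\C[X(A,0)]/I^{\T_\C^d}.
\]
Using the Reynolds operator $\rho$: for any invariant $f=\sum_j r_j g_j\in I$ with $g_j=\sum_i z_iw_ia_{ji}$, one has $f=\rho(f)=\sum_j\rho(r_j)g_j$, so $I^{\T_\C^d}$ is generated as an ideal of $\C[X(A,0)]$ by the same $d$ elements. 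This gives the second formula.

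The combinatorial rewriting of invariant monomials is routine, and the reductivity argument for passing to the quotient by $\mu$ is standard, so I do not anticipate a serious obstacle; the only thing to be careful about is ensuring the Reynolds-operator step correctly identifies $I^{\T_\C^d}$ with the $\C[X(A,0)]$-ideal generated by the moment-map components rather than some strictly larger ideal.
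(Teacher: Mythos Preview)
Your proof is correct and follows essentially the same approach as the paper for the first formula: both identify invariant monomials via the condition $\bm p-\bm q\in\Ker A=\Image B$ and then factor each such monomial as $f_{\bm\beta}$ times a monomial in the $z_iw_i$'s (the paper phrases this as ``dividing by $z_jw_j$ as many times as possible,'' which is exactly your index-by-index decomposition). For the second formula the paper's proof actually stops after the first statement, leaving the passage to $Y(A,0)$ implicit; your Reynolds-operator argument supplying $I^{\T_\C^d}=\langle\sum_j z_jw_ja_{ij}\rangle_{\C[X(A,0)]}$ is a correct and welcome elaboration of that step.
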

\begin{proof}
By definition, 
\begin{align*}
\C[X(A, 0)]&=\C[\bm{z}, \bm{w}]^{\T_\C^d}=\bigoplus_{\text{{\scriptsize$\begin{array}{l}\bm{u}, \bm{v} \in \Z_{\geq 0}\\ s.t. \ A\bm{u}-A\bm{v}=0\end{array}$}}}{\C{\bm{z}}^{\bm{u}}{\bm{w}}^{\bm{v}}}=\bigoplus_{{}^t\!(\bm{u}, \bm{v}) \in \Image(\widehat{B}) \cap \Z_{\geq 0}^{2n}}{\C{\bm{z}}^{\bm{u}}{\bm{w}}^{\bm{v}}} \\
&=\bigoplus_{\text{{\scriptsize$\begin{array}{l}\bm{\beta} \in \Image(B), \bm{\ga} \in \Z_{\geq 0}^n\\ s.t. \ \bm{\beta} + \bm{\ga} \geq 0\end{array}$}}}{\C\bm{z}^{\bm{\beta}}(z_1w_1)^{\ga_1} \cdots (z_nw_n)^{\ga_n}}, 
\end{align*}
where $\bm{\beta}+\bm{\gamma}\geq0$ means that $\beta_i+\gamma_i\geq0 $ for all $i$.  
It can be easily checked that $\C[X(A, 0)] \supseteq \C[z_1w_1,\ldots ,z_nw_n][f_{\bm{\beta}} \ | \ \bm{\beta}\in \Image B]$. On the other hand, for $\bm{\beta}\in\Image B$ and $\bm{\ga}\in \Z_{\geq 0}^n$ such that $\bm{\beta} + \bm{\ga} \geq 0$, we will show $\bm{z}^{\bm{\beta}}(z_1w_1)^{\ga_1} \cdots (z_nw_n)^{\ga_n}\in\C[z_1w_1,\ldots ,z_nw_n][f_{\bm{\beta}} \ | \ \bm{\beta}\in \Image B]$. 

Dividing $\bm{z}^{\bm{\beta}}(z_1w_1)^{\ga_1} \cdots (z_nw_n)^{\ga_n}=\displaystyle\prod_{i=1}^n{z_i^{\beta_i + \ga_i}}\displaystyle\prod_{i=1}^n{w_i}^{\ga_i}$ by $z_jw_j \ (1\leq j\leq n)$ as many as possible, we can assume for each $j$, $\beta_j+\ga_j=0$ or $\ga_j=0$. Then
\[\displaystyle\prod_{i=1}^n{z_i^{\beta_i + \ga_i}}\displaystyle\prod_{i=1}^n{w_i}^{\ga_i}=\prod_{i: \beta_i>0}{z_i^{\beta_i}}\prod_{i: \beta_i<0}{w_i}^{-\beta_i} = f_{\bm{\beta}}.\] 
This completes the proof.
\end{proof}
\begin{remark}\hspace{2pt}\label{rem:deg=1}\\
\begin{itemize}
\vspace{-5mm}
\item[(1)]By this lemma, if $\bm{z}^{\bm{u_1}}\bm{w}^{\bm{u_2}} \in \C[X(A, 0)]$, then $\bm{z}^{\bm{u_2}}\bm{w}^{\bm{u_1}} \in \C[X(A, 0)]$. In addition, by symmetry, if $\bm{z}^{\bm{u_1}}\bm{w}^{\bm{u_2}}$ is one of the minimal generators of $\C[X(A, 0)]$, then $\bm{z}^{\bm{u_2}}\bm{w}^{\bm{u_1}}$ is also one of the minimal generators.    
\item[(2)]If for some $\ell>0$ and $1\leq i\leq n$, $z_i^{\ell}$ (resp. $w_i^{\ell}$) $\in\C[X(A, 0)]$, then by the lemma above, we will have $\ell\bm{e_i}\in\Image B=\Ker A$. This means $\bm{a_i}=0$. 
Thus if for any $i$, $\bm{a_i}\neq0$, then $z_i^{\ell}, w_i^{\ell}\notin\C[X(A, 0)]$. In particular, $\C[X(A, 0)]$ is generated by monomials of degree greater than 1, and by (1) the minimal generators can be written in the following form: 
\[\{z_1w_1, \ldots ,z_nw_n\}\bigsqcup\{f_{\bm{\beta_k}}, f_{-\bm{\beta_k}} \ | \ k=1, \ldots ,m\},\]
where $\bm{\beta_k}$ is some element of $\Image B$. 
\item[(3)]Since the Poisson structure on $\C^{2n}$ is given by $\{f, g\}=\displaystyle\sum_{j=1}^n{\fr<\partial f/\partial z_j>\fr<\partial g/\partial w_j>-\fr<\partial f/\partial w_j>\fr<\partial g/\partial z_j>}$ ($f, g\in\C[\bm{z}, \bm{w}]$), one can compute explicitly the induced Poisson structure on $X(A, 0)$ and $Y(A, 0)$ as the following: 
\[\{z_jw_j, z_kw_k\}=0,  \ \ \{f_{\bm{\beta}}, z_kw_k\}=\beta_kf_{\bm{\beta}},  \ \ \ \{f_{\bm{\beta}}, f_{\bm{\gamma}}\}=\sum_{j: \beta_j\gamma_j<0}{\beta_j|\gamma_j|\fr<f_{\bm{\beta}}f_{\bm{\gamma}}/z_jw_j>},\]
where $|\gamma_j|$ is the absolute value of $\gamma_j$. 
\end{itemize}
\end{remark}

We give some typical examples of hypertoric varieties. 

\begin{example}\label{Amsurf}(The $A_m$ type surface singularity and its minimal resolution ($m\geq1$))\\
Consider the following exact sequence. 
\[
\begin{tikzcd}[row sep=large, column sep=1ex, ampersand replacement=\&]
0\ar[r]\&[5ex] \Z \ar[r, "{B=\begin{pmatrix} 1\\1\\\vdots \\1 \end{pmatrix}}"] \&[25ex]\Z^{m+1} \ar[r, "{A=\begin{pmatrix}1&&     && &-1\\&\ddots&&\vcbig{0}  &\vdots \\&  & \ddots&&  &\vdots \\ \vcbig{0}&&&1&-1\end{pmatrix}}"] \&[25ex]\Z^m\ar[r]\&[5ex] 0
\end{tikzcd}
\]
For a generic $\alpha$, the corresponding symplectic resolution $\pi : Y(A, \alpha) \to Y(A, 0)$ is given by the following (cf.\ the proof of Theorem \ref{thm:4dimension}).  
\[
\begin{tikzcd}
Y(A, \alpha)\ar[r, "\sim"]\ar[d, "\pi"]&\tilde{S}_{A_m}\ar[d]&&:\text{the minimal resolution}\\
Y(A, 0)\ar[r, "\sim"]&S_{A_m}:\ar[r, equal]&\left\{\det\begin{pmatrix}u_1&x_1\\y_1&u_1^m\end{pmatrix}=0\right\}&:\text{the $A_m$ type surface singularity}
\end{tikzcd}
\]
\end{example}

\begin{example}{\label{Amnilpotent}}(The $A_m$ type minimal nilpotent orbit closure ($m\geq1$))\\
Consider the following exact sequence. 
\[
\begin{tikzcd}[row sep=large, column sep=1ex, ampersand replacement=\&]
0\ar[r]\&[5ex]\Z^m \ar[r, "{B=\begin{pmatrix} 1 & &&\\ &\ddots&\vcbig{0}\\  &  &\ddots&\\ \vcbig{0}   &   &1\\ -1&\cdots&\cdots&-1\end{pmatrix}}"] \&[25ex] \Z^{m+1} \ar[r, "{A=\begin{pmatrix}1&1&\cdots&1\end{pmatrix}}"] \&[25ex]\Z\ar[r]\&[5ex] 0
\end{tikzcd}
\]
For a generic $\alpha>0$, one can prove directly that the corresponding symplectic resolution $\pi : Y(A, \alpha) \to Y(A, 0)$ is given by the following.
\[
\begin{tikzcd}[column sep=tiny, ampersand replacement=\&]
Y(A, \alpha)\ar[r, equal]\ar[d, "\pi"]\&\left\{(\bm{z}, \bm{w}) \in \C^{2m+2} \ \left|\right. \ \bm{z}\neq0, \ \sum_{j=1}^{m+1}{z_jw_j}=0\right\}/\T_\C^1\ar[r, "\sim"]\ar[d, "\pi"]\&T^*\bm{P}^m\ar[d, "\pi'"]\\
Y(A, 0)\ar[r, equal]\&\Set{C\in \gersl_{m+1} \ | \ \text{All $2\times2$-minors of $C=0$}}\ar[r, equal]\&\overline{\calO}_{A_m}^{\tmin}
\end{tikzcd},
\]
where $\pi'$ is the Springer resolution. Concretely, $\pi$ is given by the following. 
\[
\pi(\bm{z}, \bm{w}):=\begin{pmatrix}z_1w_1&z_1w_2&\cdots&z_1w_{m+1}\\ w_1z_2&z_2w_2&\ddots&\vdots\\ \vdots&\ddots&\ddots&z_{m}w_{m+1}\\ w_1z_{m+1}&\cdots&w_{m}z_{m+1}&z_{m+1}w_{m+1} \end{pmatrix}.
\]

\end{example}
\begin{example}(Toric quiver varieties cf.\ \cite[section 8]{HS})\label{eg:quiver}\\
Let $G=(\{v_0, \ldots ,v_d\}, E)$ be a connected undirected finite graph with $d+1$ vertices $\{v_0, \ldots ,v_d\}$ and $n$ edges $\{e_{ij}^k:=\{i, j\} \ | \ k=1, \ldots, m_{ij}\}$, where $m_{ij}$ is the multiplicity of each edge $\{i, j\}$. We also assume that $G$ doesn't have any self-loops. Next, we assign an orientation $\vec{e}^{\,k}_{ij}:=i\to j$ or $j\to i$ to each edge $e_{ij}^k$ such that $v_0$ is a {\it source}, that is, there doesn't exist $j$ and $k$ such that $e_{j0}^k\in E$ (we can always assume this by Remark \ref{rem:quiver1} (2) below).
In this setting, one can consider the natural surjective map 
\[A_G : \bigoplus_{e_{ij}^k\in E}\hspace{-5pt}\Z e_{ij}^k \cong\Z^{n} \twoheadrightarrow N\subset\bigoplus_{k=0}^d{\Z v_k}\cong\Z^{d+1} : e_{ij}^k \to\begin{cases}v_i-v_j&(\text{if} \ \vec{e}^{\,k}_{ij}=j\to i)\\v_j-v_i &(\text{if} \ \vec{e}^{\,k}_{ij}=i\to j)\end{cases} ,\]
where $N:=\{\sum_{k=0}^d{\gamma_kv_k } \ | \ \sum_{k=0}^d{\gamma_k}=0\}\cong\Z^{d}$. 
If we take a basis of $N\cong\Z^d$ as $v_0-v_1, \ldots ,v_0-v_d$, and we consider $A_G$ as a $d\times n$-matrix, then the induced action $\T_N=\T_\C^d\curvearrowright\C^{2|E|}$ will be described as the following: 
\begin{align*}\bm{t}\cdot&(\ldots, z_{0j}, \ldots, z_{ij}, \ldots, \ldots, w_{0j}, \ldots, \ldots, w_{ij}, \ldots)\\
&=(\ldots, t_jz_{0j}, \ldots, t_jz_{ij}t_i^{-1}, \ldots, \ldots, w_{0j}t_j^{-1}, \ldots, \ldots, t_iw_{ij}t_j^{-1}, \ldots).\end{align*}
Then, we call $Y(A_G, \alpha)$ a {\it toric quiver variety}. 
As below, toric quiver varieties can be also described as Nakajima quiver varieties. Actually, for a graph $G$, we can associate the double quiver $\overline{Q}_G$ to it by ``expanding'' $G$ at $v_0$ ($v_0$ becomes $q$ white dots, where $q$ is the number of edges going out from $v_0$) and adding both orientations to all edges. For example, 
\[\begin{tikzcd}[column sep=small, row sep=small]
&v_1\ar[dr, no head]&\\
v_0\ar[r, no head]\ar[dr, no head]\ar[ur, no head]&v_2\ar[r, no head]&v_4\\
&v_3\ar[ur, no head]&\\
&G&
\end{tikzcd}
 \\ \ \  \rightsquigarrow \ \ \ \ 
\begin{tikzcd}[column sep=small, row sep=small]
&&\circ\ar[d, shift left]&&\\
&&\bullet\ar[u, shift left]\ar[d, shift left]&&\\
\circ\ar[r, shift left]&\bullet\ar[r, shift left]\ar[l, shift left]&\bullet\ar[u, shift left]\ar[l, shift left]\ar[r, shift left]&\bullet\ar[l, shift left]\ar[r, shift left]&\circ\ar[l, shift left]\\
&&\overline{Q}_G&&
\end{tikzcd}.\]
Then, by the definition of the torus action, $Y(A_G, \alpha)$ is the same as the (framed) Nakajima quiver variety $\gerM_{\alpha}(\overline{Q}_G, \bm{v}, \bm{w})$, where its dimension vector is $\bm{v}=(1, \ldots ,1)\in\Z^{p}$ and $\bm{w}=(1, \ldots ,1)\in\Z^{q}$, where $p$ (resp. $q$) is the number of $\bullet$ (resp. $\circ$) of $\overline{Q}_G$.

Toric quiver varieties include many examples of hypertoric varieties, actually, the hypertoric variety in Example \ref{Amsurf} is the toric quiver variety associated to the edge graph of $(m+1)$-gon and the one in Example \ref{Amnilpotent} is the toric quiver variety associated to the following graph with $m+1$ {multiple edges}: 
\begin{figure}[h]
\centering
\begin{tikzcd}\bullet\arrow[r, draw=none, "\raisebox{+1.5ex}{\vdots}" description, no head]\arrow[r, bend left,"e^0", no head]\arrow[r, bend right, swap, "e^{m}", no head]&\bullet\end{tikzcd}
\caption{Example \ref{Amnilpotent}}
\end{figure}
\end{example}

\begin{remark}\label{rem:quiver1}\hspace{2pt}\\
\begin{itemize}
\vspace{-5mm}
\item[(1)]If $G$ has some connected components $G=G_1\sqcup\cdots\sqcup G_s$, then we can naturally define the corresponding hypertoric variety to $G$ is the direct product of the corresponding ones to $G_k$'s (cf.\ Lemma \ref{lem:product}). 
\item[(2)]Although we assign an orientation to $G$ such that $v_0$ is a source, the $\Cstar\times\T_\C^{n-d}$-equivariant isomorphism class of the associated hypertoric variety as Poisson variety doesn't depend on the orientation of $G$. Actually, since switching the orientation of each edge $e_{ij}^k$ is the same as  multiplying the column vector of $A_G$ corresponding to $e_{ij}^k$ by $-1$, as we will see later (cf.\ Lemma \ref{lem:fund}), this operation induces an $\Cstar\times\T_\C^{n-d}$-equivariant isomorphism between the corresponding hypertoric varieties.   
\item[(3)]In the section 4, we will classify 4- and 6-dimensional affine hypertoric varieties, and then in these cases, all hypertoric varieties are realized as toric quiver varieties.
\end{itemize}
\end{remark}

In the remaining part of this section, we consider some transformations of $A$ (or $B$) which don't change the {$\Cstar\times\T_\C^{2n-d}$-equivariant (resp. $\Cstar\times\T_\C^{n-d}$-equivariant)} isomorphism class of $X(A, \alpha)$ (resp. $Y(A, \alpha)$) as Poisson varieties, {where $\T_\C^{2n-d}:=\T_\C^{2n}/\T_\C^d$ and $\T_\C^{n-d}:=\T_\C^n/\T_\C^d$ is the remaining hamiltonian torus action.} After that, we explain that two (unimodular) matrices $A$ and $A'$ are transformed by those transformations if and only if the associated matroids $M(A)$ and $M(A')$ are isomorphic to each other. 

\begin{definition}\label{def:fund}
For rank $d$ matrices $A$, $A'\in\Mat_{d\times n}(\Z)$, we say that $A'$ is {\it equivalent} to $A$ if $A'$ is obtained from $A$ by a sequence of some elementary row operations over $\Z$, interchanging some column vectors $\bm{a_i}$, and multiplying some $\bm{a_i}$ by $-1$. In this case, we denote $A\sim A'$.  
\end{definition}
\begin{remark}\label{rem:fund}
By definition, $A\sim A'$ holds if and only if there exists $P\in GL_d(\Z)$ and a $n\times n$ signed permutation matrix $D$ such that $A'=PAD$. Here, a signed permutation matrix is a product of a permutation matrix and a diagonal matrix whose all diagonal components are 1 or $-1$.  
\end{remark}
Then, we have the following (this is essentially considered in \cite[Proposition 3.2]{AP}). 
\begin{lemma}\label{lem:fund} In the setting above, if $A\sim A'$ (more explicitly $A'=PAD$ as in Remark \ref{rem:fund}), then we have the following $\Cstar\times\T_\C^{2n-d}$-equivariant {(resp. $\Cstar\times\T_\C^{n-d}$-equivariant)} isomorphisms as Poisson varieties {(resp. symplectic varieties)}.  
\[ (X(A, \alpha), \{-, -\}) \xrightarrow{\sim} (X(A', \alpha'), \{-, -\}'),
\]
\[ (\text{resp. } \ (Y(A, \alpha), \om_0) \xrightarrow{\sim} (Y(A', \alpha'), \om'_0)),
\]
where $\alpha':=P\alpha$. 
\end{lemma}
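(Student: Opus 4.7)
The plan is to factor the equivalence into three elementary types and construct for each an explicit symplectic automorphism of the ambient $\C^{2n}$ which descends to the GIT quotients. By Remark \ref{rem:fund}, we may write $D = P_\sigma E$ where $P_\sigma$ is a permutation matrix and $E=\diag(\epsilon_1,\ldots,\epsilon_n)$ with $\epsilon_i\in\{\pm 1\}$, so it suffices to treat (a) $A'=PA$, (b) $A'=AP_\sigma$, and (c) $A'=AE$ separately and compose.

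For case (a), the subtori $A^T(\T_\C^d)$ and $(PA)^T(\T_\C^d)=A^T P^T(\T_\C^d)$ coincide inside $\T_\C^n$, since $P^T$ is an automorphism of $\T_\C^d$. Hence the identity $\C^{2n}\to\C^{2n}$ is $\T_\C^d$-equivariant after reparametrizing the acting torus by $\bm{t}\leftrightarrow P^T\bm{t}$; under this change of parameterization a character $\alpha$ is identified with $P\alpha$. The moment maps are related by $\mu'=PA\Psi=P\mu$, so $(\mu')^{-1}(0)=\mu^{-1}(0)$, and the identity descends to an isomorphism of $X(A,\alpha)$ and $Y(A,\alpha)$ with $X(A',P\alpha)$ and $Y(A',P\alpha)$ that preserves the Poisson (resp.\ symplectic) structure, the conical $\Cstar$-action of Remark \ref{rem:comm diagram}, and the residual Hamiltonian torus. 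For case (b), I would take $\phi(\bm{z},\bm{w})=(z_{\sigma(1)},\ldots,z_{\sigma(n)},w_{\sigma(1)},\ldots,w_{\sigma(n)})$, which is a linear symplectic isomorphism satisfying $\mu'\circ\phi=\mu$ and intertwining the $\T_\C^d$-actions via $A$ and $AP_\sigma$; the GIT parameter is unchanged and equivariance for $\Cstar\times\T_\C^{2n-d}$ (resp.\ $\Cstar\times\T_\C^{n-d}$) holds up to the induced permutation of torus factors.

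Case (c) is the most delicate, and reduces to negating a single column $\bm{a_i}$. The natural candidate $(z_i,w_i)\mapsto(w_i,z_i)$ intertwines the two actions but negates $dz_i\wedge dw_i$; to preserve the symplectic form I would instead take
\[\phi_i(z_1,\ldots,z_n,w_1,\ldots,w_n)=(z_1,\ldots,w_i,\ldots,z_n,w_1,\ldots,-z_i,\ldots,w_n),\]
i.e.\ $(z_i,w_i)\mapsto(w_i,-z_i)$ and identity elsewhere. A direct check gives $\phi_i^*\om_\C=\om_\C$, and the new $\T_\C^d$-action via $AE_i$ (where $E_i$ negates only the $i$-th column) matches $\phi_i$: under the identification $z_i'=w_i$, $w_i'=-z_i$, the old action $\bm{t}\cdot w_i=\bm{t}^{-\bm{a_i}}w_i$ becomes the new $\bm{t}\cdot z_i'=\bm{t}^{-\bm{a_i}}z_i'$, and similarly for $w_i'$. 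Moreover $\mu'\circ\phi_i=\mu$ because $z_i'w_i'(-\bm{a_i})=-w_i z_i(-\bm{a_i})=z_iw_i\bm{a_i}$, so $\phi_i$ sends $\mu^{-1}(0)$ to $(\mu')^{-1}(0)$, the conical $\Cstar$-scaling commutes with $\phi_i$, and the GIT parameter is unchanged; the residual torus action is intertwined up to the involution swapping the $z_i$- and $w_i$-coordinate factors of $\T_\C^{2n}$.

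Composing the three maps yields a $\Cstar$-equivariant symplectic isomorphism $\C^{2n}\to\C^{2n}$ that intertwines the two $\T_\C^d$-actions, sends $\mu^{-1}(0)$ to $(\mu')^{-1}(0)$, and carries the $\alpha$-semistable locus to the $(P\alpha)$-semistable locus (using the description (*) of semistability and the fact that the signed column permutation rearranges the cones $\Q_{\geq 0}\bm{a_i}$ and $\Q_{\geq 0}(-\bm{a_i})$ consistently with $E$); descending gives the desired equivariant Poisson (resp.\ symplectic) isomorphisms. The only real obstacle is bookkeeping the sign in case (c): one must consistently choose $(z_i,w_i)\mapsto(w_i,-z_i)$ rather than the naive swap so that symplecticity, the moment map identity, and the intertwining of torus actions all hold simultaneously.
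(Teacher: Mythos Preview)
Your proof is correct and follows essentially the same strategy as the paper: decompose into the three elementary operations and construct explicit symplectic automorphisms of $\C^{2n}$ that descend. The only cosmetic difference is in case (c), where the paper uses $(z_i,w_i)\mapsto(-w_i,z_i)$ while you use its inverse $(z_i,w_i)\mapsto(w_i,-z_i)$; both preserve $\om_\C$ and intertwine the actions, so this is immaterial.
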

\begin{proof}
Since elementary row operations don't change the image of torus embedding $A^T: \T_\C^d\hookrightarrow \T_\C^n$, the claim is clear if we take $\alpha':=P\alpha$. 

Next, we denote the matrix corresponding to interchanging the $i$-th column vector $\bm{a_i}$ and the $j$-th column vector $\bm{a_j}$ of $A$ by $D_{ij}$ and set $A'=AD_{ij}$. 
Then, the moment map $\mu': \C^{2n}\to \C^d$ corresponding to $A'$ is the same as the composition $\mu\circ \widehat{D_{ij}}$, where $\widehat{D_{ij}}: \C^{2n}\to\C^{2n}$ is the $\Cstar\times\T_\C^{2n}$-equivariant linear isomorphism as only interchanging $z_i\leftrightarrow z_j$ and $w_i\leftrightarrow w_j$. Since $\widehat{D_{ij}}$ preserves the symplectic form $\om_\C:=\sum_{j=1}^n{dz_j\wedge dw_j}$ clearly, this induces the desired $\Cstar\times\T_\C^{2n-d}${-equivariant isomorphism $X(A, \alpha) \xrightarrow{\sim} X(A', \alpha')$. Moreover, $\T_\C^n$-invariance of $\mu$ induces the desired $\Cstar\times\T_\C^{n-d}$-equivariant isomorphism $Y(A, \alpha)\xrightarrow{\sim} Y(A', \alpha')$. }

Finally, we set the matrix $D_i$ corresponding to multiplying the $i$-th column vector $\bm{a_i}$ by $-1$, and we assume $A'=AD_i$. Then, the moment map $\mu': \C^{2n}\to \C^d$ corresponding to $A'$ is the same as the composition $\mu\circ \widehat{D_{i}}$, where $\widehat{D_{i}}: \C^{2n}\to\C^{2n}$ is the $\T_\C^n$-equivariant linear isomorphism {sending as} $(z_i, w_i)\mapsto (-w_i, z_i)$. Since $\widehat{D_{i}}$ preserves the symplectic form $\om_\C:=\sum_{j=1}^n{dz_j\wedge dw_j}$, this induces the desired {$\Cstar\times\T_\C^{2n-d}$-equivariant isomorphism $X(A, \alpha) \xrightarrow{\sim} X(A', \alpha')$ (resp. $\Cstar\times\T_\C^{n-d}$-equivariant isomorphism $Y(A, \alpha)\xrightarrow{\sim} Y(A', \alpha')$)}. 
\end{proof}

\begin{remark}\hspace{2pt}\label{rem:fund}\\
\begin{itemize}
\vspace{-5mm}
\item[(1)]We {took $A$, then we take} $B$ as $\begin{tikzcd}[column sep=small]0\ar[r]&\Z^{n-d}\ar[r, "{B}"] &\Z^{n}\ar[r, "{A}"]&\Z^d\ar[r]&0\end{tikzcd}$ is exact. Conversely, if we take $B$ at first and take $A'$ as the sequence above is exact, then by the lemma above, the {$\Cstar\times\T_\C^{2n-d}$-equivariant (resp. $\Cstar\times\T_\C^{n-d}$-equivariant)} isomorphism class doesn't depend on the choice of $A'$. Thus, we can also think that $X(A, \alpha)$ (resp. $Y(A, \alpha)$) is defined by $B$ and $\alpha$. In similar way, if we consider a matrix $B'$, which is obtained from $B$ by some elementary column operations over $\Z$, interchanging row vectors $\bm{b_i}$, and multiplying $\bm{b_i}$ by $-1$, then the {$\Cstar\times\T_\C^{2n-d}$-equivariant (resp. $\Cstar\times\T_\C^{n-d}$-equivariant)} isomorphism class of the corresponding Lawrence toric variety (resp. hypertoric variety) would not change.

In this paper, we fix a pair $(A, B)$ and consider the corresponding Lawrence toric variety (resp. hypertoric variety). Hence, if we simply say that we take $A'=PAD$ as Remark \ref{rem:fund}, then we implicitly take a $B'=DBQ$ and consider a pair $(A', B')$ simultaneously, where some $Q\in GL_{n-d}(\Z)$. 
\item[(2)]
Let $A\in \Mat_{d\times n}(\Z)$ and $B\in \Mat_{n\times n-d}(\Z)$ be unimodular matrices. By applying some transformations to $A$ and $B$, one can transform them to matrices in the following forms: 
\[B'=\left(
\begin{array}{cccc}
1&&&\\
&\ddots&\vcbig{0}\\
&&\ddots&\\
\vcbig{0}&&1\\
\cline{1-4}
&&&\\
&\vcbig{C}&\\
\end{array}
\right)\\
, \ \ \ A'=
\left(
\begin{array}{cccc|ccc}
&&&&1&          &\LargeO\\
&\vcbig{-C}&&  &\ddots&\\
&&&&\LargeO  &          &1\\
\end{array}
\right),
\]
where $C\in \Mat_{d\times n-d}(\Z)$ is a totally unimodular matrix. Here, we call $C$  {\it totally unimodular} matrix if every square submatrix has determinant $0$ or $\pm1$. In this case, as one can easily see, $A'$ and $B'$ are also totally unimodular matrices.  
\end{itemize}
\end{remark}

The following is obvious. 
\begin{lemma}\label{lem:product}If $A\in\Mat_{d\times n}(\Z)$ is the form of $A=A_1\oplus A_2\oplus\cdots\oplus A_s$, where
$A_i\in M_{k_i\times \ell_i}(\Z)$, then there will exist the following natural $\Cstar\times\T_\C^{n-d}$-equivariant isomorphism as Poisson varieties. 
\[X(A, \alpha) \cong X(A_1, \alpha_1) \times X(A_2, \alpha_2) \times \cdots \times X(A_s, \alpha_s),
\]
\[Y(A, \alpha) \cong Y(A_1, \alpha_1) \times Y(A_2, \alpha_2) \times \cdots \times Y(A_s, \alpha_s),
\]
where $\alpha_i:=p_{i}(\alpha)$, and $p_i: \C^d \to \C^{k_i}$ is the natural projection. 
\end{lemma}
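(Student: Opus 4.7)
The plan is to observe that the entire construction (the symplectic vector space, torus action, moment map, GIT parameter, and $\C^*$-action) decomposes in a compatible way along the block structure of $A$, and then invoke standard compatibility of GIT quotients with products.

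First, I would write $\C^{2n} = \prod_{i=1}^s \C^{2\ell_i}$ according to the block decomposition of the columns of $A$, and correspondingly $\T_\C^d = \prod_{i=1}^s \T_\C^{k_i}$ according to the block decomposition of the rows. Because $A = A_1 \oplus \cdots \oplus A_s$, the Hamiltonian $\T_\C^d$-action on $\C^{2n}$ is precisely the product of the Hamiltonian $\T_\C^{k_i}$-actions on $\C^{2\ell_i}$ induced by $A_i^T$, and the moment map factorizes as $\mu = \mu_1 \times \cdots \times \mu_s$ where $\mu_i$ is the moment map associated with $A_i$. In particular, $\mu^{-1}(0) = \prod_i \mu_i^{-1}(0)$.

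Next, I would use the explicit description $(*)$ of the semistable locus to show $(\C^{2n})^{\alpha-ss} = \prod_i (\C^{2\ell_i})^{\alpha_i-ss}$. Writing $\alpha = (\alpha_1,\ldots,\alpha_s)$ under the identification $\C^d = \prod_i \C^{k_i}$, and noting that each column $\bm{a_j}$ of $A$ lies entirely within the $i$-th summand $\C^{k_i}$ when $j$ belongs to the $i$-th block, the cone condition $\alpha \in \sum_{z_j\neq 0}\Q_{\geq 0}\bm{a_j}+\sum_{w_j\neq 0}\Q_{\geq 0}(-\bm{a_j})$ splits block by block into the cone condition for each $\alpha_i$. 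The analogous statement holds for $(\mu^{-1}(0))^{\alpha-ss}$. Since the $\T_\C^d$-action is diagonal with respect to the product decomposition and the action of $\T_\C^{k_i}$ only moves coordinates in the $i$-th block, taking categorical quotients commutes with the product: this yields the desired isomorphisms of varieties
\[
X(A,\alpha) \cong \prod_i X(A_i,\alpha_i),\qquad Y(A,\alpha) \cong \prod_i Y(A_i,\alpha_i).
\]

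Finally, I would check compatibility with the extra structures. The standard symplectic form $\omega_\C = \sum_{j=1}^n dz_j\wedge dw_j$ decomposes as $\omega_\C = \sum_i \omega_{\C,i}$ where $\omega_{\C,i}$ is the symplectic form on $\C^{2\ell_i}$, so the induced Poisson bracket on the quotient is the product bracket. The conical $\C^*$-action scales every $z_j$ and $w_j$ by $s^{-1}$ uniformly, so it respects the product decomposition. For $\T_\C^{n-d}$-equivariance, the residual torus $\T_\C^n/\T_\C^d$ decomposes as $\prod_i \T_\C^{\ell_i}/\T_\C^{k_i} = \prod_i \T_\C^{\ell_i - k_i}$, compatibly with the product decomposition of $Y(A,\alpha)$.

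I do not expect any serious obstacle here; the only mild care needed is in the semistability step, where one must verify that a conical sum of vectors supported in distinct coordinate blocks decomposes uniquely block by block. This is immediate from the direct sum structure $\C^d = \bigoplus_i \C^{k_i}$ together with $\bm{a_j}\in \C^{k_i}$ whenever $j$ is in the $i$-th block.
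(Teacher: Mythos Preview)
Your proposal is correct and spells out exactly the routine verification the paper has in mind; the paper itself simply declares the lemma ``obvious'' and gives no proof. Your decomposition of the semistable locus via the description $(*)$ and the check of $\Cstar\times\T_\C^{n-d}$-equivariance are precisely the points one would need to make explicit.
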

\begin{remark}\label{rem:product}
In this case, we can take $B$ as $B_1\oplus\cdots\oplus B_s$, where each $B_i$ is a $\ell_i\times(\ell_i-k_i)$-matrix satisfying the exact sequence $\begin{tikzcd}[column sep=tiny]0\ar[r]&\Z^{\ell_i-k_i}\ar[r, "{B_i}"] &\Z^{\ell_i}\ar[r, "{A_i}"]&\Z^{k_i}\ar[r]&0\end{tikzcd}$. 
\end{remark}

\begin{example}{\label{eg:product}}(The direct product of some minimal nilpotent orbit closures of type $A$)\\
For $A:=
  \begin{pmatrix}
  \smash[b]{\block{\ell_1+1}}&                              &          &                     \\
                              & \smash[b]{\block{\ell_2+1}} &          &  \text{\huge{0}}\\
                              &                              & \ddots &                     \\
   \text{\huge{0}}      &                              &           & \block{\ell_s+1}
  \end{pmatrix}
$
and a generic $\alpha \in \Z^s$, we obtain the following isomorphism. 
\[
\begin{tikzcd}
Y(A, \alpha)\ar[r, "\sim"]\ar[d]&T^*\bm{P}^{\ell_1} \times T^*\bm{P}^{\ell_2} \times \cdots \times T^*\bm{P}^{\ell_s}\ar[d]\\
Y(A, 0)\ar[r, "\sim"]&\overline{\calO}_{A_{\ell_1}}^{\tmin} \times \overline{\calO}_{A_{\ell_2}}^{\tmin} \times \cdots \times \overline{\calO}_{A_{\ell_s}}^{\tmin}
\end{tikzcd},
\]
where if $\ell=0$, then $\overline{\calO}_{A_{\ell}}^{\tmin}:=\{pt\}$.
In the section 4, we will show that all hypertoric varieties {that} are isomorphic to nilpotent orbit closures $\overline{\calO}$ of some semisimple Lie algebra $\gerg$ as conical symplectic varieties, are in the form above (Theorem \ref{thm:char_nilpotent}). 
\end{example}


In the remaining part of this paper, we assume that for any $i$, $\bm{b_i}\neq0$. This is because if $\bm{b_i}=0$ for some $i$, then we can choose $A$ as the following form:  
\[
A=\left(\begin{array}{ccc|c|ccc}
0&\cdots&0&1&0&\cdots&0\\
\cline{1-7}
  &         & &0&  &         &\\
\vcbig{A_1} &&\vdots&\vcbig{A_2}         &\\
  &         & &0&          &         &
\end{array}\right).
\]
Then, by interchanging $\bm{a_1}$ with $\bm{a_i}$, we can apply Lemma \ref{lem:product} to this $A$ and obtain the following: 
\begin{corollary}\label{cor:Breduction}
Assume $\bm{b_i}=0$ for some $i$. For $A$ as above, we set $\overline{A}\in M_{(d-1)\times(n-1)}(\Z)$ as the following: 
\[
\overline{A}:=\begin{pmatrix}
&&&\\
\vcbig{A_1}&\vcbig{A_2}\\
\end{pmatrix}.
\]
Then, there exists the following $\Cstar\times\T_\C^{n-d}$-equivariant isomorphism as Poisson varieties
\[
X(A, \alpha) \cong X(\overline{A}, \overline{\alpha})\times\C,
\]
\[
Y(A, \alpha) \cong Y(\overline{A}, \overline{\alpha}),
\]
where $\overline{\alpha}$ is the image of $\alpha$ by the natural projection $\C^d \to \C^{d-1} : (u_1, \ldots ,u_d) \mapsto (u_2, \ldots ,u_d)$. 
\end{corollary}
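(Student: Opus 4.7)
The plan is to reduce the claim to Lemma~\ref{lem:product} after isolating a trivial direct factor. The hypothesis $\bm{b_i}=0$ means every element of $\Image B$ has vanishing $i$-th coordinate, so $\Image B\subseteq\bigoplus_{j\neq i}\Z\bm{e_j}=\Z^{n-1}$. Writing $\Z^n=\Z\bm{e_i}\oplus\Z^{n-1}$, the exact sequence $0\to\Image B\to\Z^n\to\Z^d\to 0$ therefore descends to an isomorphism $\Z\oplus(\Z^{n-1}/\Image B)\xrightarrow{\sim}\Z^d$. Applying suitable elementary row operations to $A$ (allowed by Definition~\ref{def:fund}) to realize this splitting, and setting $\overline{A}:\Z^{n-1}\twoheadrightarrow\Z^{n-1}/\Image B\cong\Z^{d-1}$, I may assume $A$ is in precisely the displayed form of the corollary.

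Swapping the first and $i$-th columns (another fundamental transformation) converts $A$ to the genuine block-diagonal form $A=(1)\oplus\overline{A}$, with $B$ correspondingly becoming $(0)\oplus\overline{B}$ for an appropriate $\overline{B}$ realizing $\overline{A}$'s exact sequence. By Lemma~\ref{lem:fund}, this reshuffling is a $\Cstar\times\T_\C^{2n-d}$-equivariant (resp.\ $\Cstar\times\T_\C^{n-d}$-equivariant) isomorphism of Poisson (resp.\ symplectic) varieties, and Lemma~\ref{lem:product} then yields
\[
X(A,\alpha)\cong X((1),\alpha_1)\times X(\overline{A},\overline{\alpha}),\qquad Y(A,\alpha)\cong Y((1),\alpha_1)\times Y(\overline{A},\overline{\alpha}),
\]
where $\alpha_1$ and $\overline{\alpha}$ are the first and remaining coordinates of the re-ordered parameter, matching the projection described in the statement.

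It remains to identify the factors attached to $A_1':=(1)$. Here $\T_\C$ acts on $\C^2$ by $(z,w)\mapsto(tz,t^{-1}w)$, with moment map $\mu_1(z,w)=zw$. A direct GIT computation gives $X((1),\alpha_1)\cong\C$ for every $\alpha_1$ (the isomorphism sending the orbit of $(z,w)$ to $zw$), while $Y((1),\alpha_1)=\{\text{pt}\}$, since $\mu_1^{-1}(0)=\{zw=0\}$ carries no nontrivial $\T_\C$-invariants and its semistable locus for any $\alpha_1$ is a single free $\T_\C$-orbit. Substituting into the product decompositions yields the stated isomorphisms. The only subtle step is the initial splitting of the exact sequence; the product decomposition and the trivial-factor computation are then routine, and all torus and $\Cstar$-equivariances pass through because every reduction step lies among the fundamental transformations governed by Lemma~\ref{lem:fund}.
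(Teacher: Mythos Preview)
Your proof is correct and follows essentially the same route as the paper: put $A$ in block-diagonal form via fundamental transformations, apply Lemma~\ref{lem:product}, and compute the trivial factor $X((1),\alpha_1)\cong\C$, $Y((1),\alpha_1)\cong\{\text{pt}\}$ directly. (One minor slip: for $\alpha_1=0$ the semistable locus of $\{zw=0\}$ is all of $\{zw=0\}$, not a single orbit, but your invariant-ring remark already covers that case.)
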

\begin{proof}
By Lemma \ref{lem:product}, we only have to show that for $\begin{tikzcd}[column sep=1ex]0\ar[r]&[1ex]0\ar[r, "{B_0:=0}"] &[5ex]\Z\ar[r, "{A_0:=(1)}"]&[5ex]\Z\ar[r]&[1ex]0\end{tikzcd}$, we have $X(A_0, \alpha_1)\cong\C$ and $Y(A_0, \alpha_1)\cong\{pt\}$. This is easily followed by the definition of the moment map. 
\end{proof}
Since in the section 3 we will classify affine {hypertoric} varieties by the associated regular matroids, here we review some notions of matroids. For the more detail on matroids, we refer the reader to \cite{Ox}. 
For reader's convenience, we review the definition (actually there are many equivalent definitions of matroids) and some properties of matroids which we will need later.    
\begin{definition}\hspace{2pt}\label{def:matroid}\\
\begin{itemize}
\vspace{-5mm}
\item[(1)]A {\it matroid} $M$ is an ordered pair $(E, \calB)$ consisting of a finite set $E$ and a collection $\calB$ of subsets of $E$ satisfying the following properties: 
\begin{itemize}
\item[(i)]$\calB\neq\emptyset$.
\item[(ii)]If $B_1$ and $B_2$ are in $\calB$ and $x\in B_1\setminus B_2$, then there is an element $y\in B_2\setminus B_1$ such that $\{y\}\cup(B_1\setminus\{x\})\in\calB$. 
\end{itemize}
In this case, we call each $B\in\calB$ a {\it basis} of $M$. 
\item[(2)]For a matroid $M=(E, \calB)$, we define the {\it dual matroid} $M^*$ of $M$ as $M^*:=(E, B^*:=\{E\setminus B \ | \ B\in\calB\})$. 
By definition, we have $(M^*)^*=M$. 
\item[(3)]Let $M_1$ and $M_2$ be two matroids on the same ground set $E$. If a bijective map $\vphi: E\to E$ satisfies the following, we say $\vphi$ is a  {\it matroid isomorphism}: 
\[\text{$B\subset E$ is a basis of $M_1$ if and only if $\vphi(B)\subset E$ is a basis of $M_2$}.\]
\end{itemize}
\end{definition}
Then, for example, if an integer matrix $A=[\bm{a_1}, \ldots ,\bm{a_n}]$ is given, then one can consider {\it the vector matroid $M(A)=([n]:=\{1, \ldots ,n\}, \calB)$ associated to $A$ over $\Q$} as the following: 
\[\calB:=\{\{i_1, \ldots ,i_{d}\}\subseteq[n] \ | \ \{\bm{a_{i_1}}, \ldots ,\bm{a_{i_d}}\} \ \text{is a basis over $\Q$}\}.\]

\begin{remark}\label{rem:dual matroid}
For an exact sequence $\begin{tikzcd}[column sep=small]0\ar[r]&\Z^{n-d}\ar[r, "{B}"] &\Z^{n}\ar[r, "{A}"]&N\cong\Z^d\ar[r]&0\end{tikzcd}$, the dual matroid of $M(A)$ is given by $M(B^T)$. One can see this by using Lemma \ref{lem:Gale} for $r=0$ (or see also \cite[Proposition 2.2.23]{Ox}). 
\end{remark}

In Definition \ref{def:fund}, we considered some operations on matrices and an equivalence relation. Clearly, these transformations don't change the isomorphism class of the associated vector matroid as the following: 
\begin{lemma}
Let $A$ be a matrix. For a matrix $A'$ such that $A'\sim A$, we have
\[M(A)\cong M(A'). \] 
\end{lemma}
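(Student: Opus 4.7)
The plan is to unwind the definition of $\sim$ from Remark \ref{rem:fund} and check that, at the level of column-dependencies, nothing changes. Write $A' = PAD$ with $P \in GL_d(\Z)$ and $D$ a signed permutation matrix, and decompose $D = D_\sigma \cdot \mathrm{diag}(\epsilon_1,\dots,\epsilon_n)$ where $\sigma \in \gerS_n$ is the underlying permutation and each $\epsilon_i \in \{\pm1\}$. I want to exhibit an explicit bijection $\vphi : [n] \to [n]$ (namely $\sigma$, up to inverse convention) sending bases of $M(A)$ to bases of $M(A')$.

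First I would handle the left multiplication by $P$. Since $P \in GL_d(\Z) \subset GL_d(\Q)$ is invertible over $\Q$, for any $\{i_1,\dots,i_d\} \subseteq [n]$ the vectors $\{\bm{a_{i_1}},\dots,\bm{a_{i_d}}\}$ form a $\Q$-basis of $\Q^d$ iff $\{P\bm{a_{i_1}},\dots,P\bm{a_{i_d}}\}$ do. Hence $M(A) = M(PA)$ on the ground set $[n]$, i.e.\ the identity of $[n]$ is a matroid isomorphism.

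Second I would handle the right multiplication by $D$. By the decomposition above, the $i$-th column of $PAD$ is $\epsilon_i \cdot (PA)_{\sigma(i)}$ (or $\sigma^{-1}(i)$, depending on convention; either way only a relabelling of columns and a sign change in each column occurs). Scaling a column by a nonzero scalar does not affect whether a given collection of columns is $\Q$-linearly independent, so $\{i_1,\dots,i_d\}$ is a basis of $M(PAD)$ iff $\{\sigma(i_1),\dots,\sigma(i_d)\}$ is a basis of $M(PA) = M(A)$. Thus $\vphi := \sigma^{-1}$ (with the convention above) gives a matroid isomorphism $M(A) \xrightarrow{\sim} M(A')$.

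The argument is essentially bookkeeping; the only point requiring care is fixing a consistent convention for how $D$ acts on column indices so the permutation $\sigma$ gives a well-defined bijection of ground sets. Once this is pinned down, composing the two bijections above (the identity on $[n]$ for the $P$-step and $\sigma^{-1}$ for the $D$-step) yields the desired matroid isomorphism $M(A) \cong M(A')$.
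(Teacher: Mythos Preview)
Your proof is correct. The paper does not actually give a proof of this lemma at all --- it simply prefaces the statement with ``Clearly, these transformations don't change the isomorphism class of the associated vector matroid'' and leaves it at that. Your argument is exactly the natural unwinding of why this is clear: invertible row operations preserve linear (in)dependence of any subset of columns, nonzero column scalings do likewise, and column permutations relabel the ground set. So there is no alternative approach to compare against; you have supplied the routine verification the paper omits.
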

Below, if a matroid $M$ is isomorphic to a matroid $M(A)$ associated to some totally unimodular matrix $A$, we call $M$ {\it regular matroid}. By Remark \ref{rem:fund} (3), the associated matroid $M(A)$ to some (not necessary totally) unimodular matrix $A$ is regular. 
Then, the following important fact that we will need later is the converse of the lemma above in the case of unimodular matrices. 
\begin{theorem}
For two unimodular matrices $A$ and $A'$ whose size are the same, the following does hold: 
\[A\sim A' \ \Leftrightarrow M(A) \cong M(A').\]
\end{theorem}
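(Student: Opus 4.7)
The forward direction ($\Rightarrow$) is just the previous lemma, so my plan focuses on the converse: given a matroid isomorphism $\varphi: [n]\to[n]$ between $M(A)$ and $M(A')$, produce $P\in GL_d(\Z)$ and a signed permutation matrix $D$ with $A' = PAD$.

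The first step is to absorb the permutation $\varphi$ into $D$. Since reordering the columns of $A$ is one of the allowed moves in $\sim$, after replacing $A$ by $A\cdot D_\varphi$ (where $D_\varphi$ is the permutation matrix for $\varphi$) I may assume $M(A) = M(A')$ as matroids on the common ground set $[n]$. Next I pick a basis $I = \{i_1,\ldots,i_d\}$ of this common matroid and consider the $d\times d$ submatrices $A_I$ and $A'_I$. Unimodularity forces $\det A_I, \det A'_I \in \{\pm 1\}$, so $A_I^{-1}$ and $(A'_I)^{-1}$ lie in $GL_d(\Z)$. Using the allowed row operations (left multiplication by elements of $GL_d(\Z)$) plus column sign flips, I can normalize both $A$ and $A'$ so that the submatrix indexed by $I$ is $I_d$. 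At this point the problem reduces to showing that two totally unimodular $d\times n$ matrices $C$ and $C'$ whose $I$-columns are both $I_d$ and which represent the same matroid must be related by row/column sign flips that preserve the $I_d$ block (i.e.\ by conjugating with a single $\pm 1$ diagonal matrix $E$, giving $C' = ECE$).

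For the remaining columns $j \notin I$, the matroid structure pins down the support: the nonzero entries of the $j$-th column in the $I_d$-normalized form occur exactly in the rows indexed by $I \cap \mathrm{circ}(j, I)$, where $\mathrm{circ}(j, I)$ is the fundamental circuit of $j$ with respect to $I$ (this is a standard matroid fact about fundamental circuits of a representable matroid). Thus $C$ and $C'$ have entries in the same positions, and at each nonzero position the entry of each is $\pm 1$ by total unimodularity. So the problem reduces to a combinatorial comparison: I need to show the sign patterns of $C$ and $C'$ agree up to simultaneous row/column sign flips.

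This last step is the crux, and it is essentially the classical unique representability theorem for regular matroids (Brylawski--Lucas, building on Tutte): a regular matroid admits a unique totally unimodular representation up to such row/column scaling. The proof proceeds by examining how the $\pm 1$ entries in $C$ and $C'$ along a fixed non-basis column $j$ relate, using the $2\times 2$ minor condition coming from unimodularity together with the chord structure in fundamental circuits, and propagating sign choices through the bipartite ``signed support'' graph on rows and non-basis columns. Concretely, one fixes the sign of a single nonzero entry, then uses that every $2\times 2$ submatrix whose row/column indices span a basis has determinant $\pm 1$ and this determinant is a matroid invariant (the value depends only on which bases are formed), which forces every other entry's sign. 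Performing the resulting row/column sign flips gives $C' = ECE$ with $E$ diagonal $\pm 1$, which when composed with the earlier row operations and column permutations yields the desired $A' = PAD$. The main obstacle is this final sign-propagation argument, which I would either prove by induction on $n-d$ using fundamental circuits or, more cleanly, cite from the matroid-theoretic literature.
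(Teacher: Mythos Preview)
Your proposal is correct and follows essentially the same route as the paper: both reduce the converse direction to the unique representability theorem for regular matroids, with the paper citing it directly from Oxley (Proposition 6.5, together with Lemma 6.6.4 and Theorem 6.4.7, noting that for totally unimodular input the argument yields $P\in GL_d(\Z)$ and a signed permutation $D$), while you sketch the standard normalize-to-$I_d$/fundamental-circuit/sign-propagation proof of that theorem before suggesting to cite it.
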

\begin{proof}
We only have to show that if $M(A)\cong M(A')$, then $A\sim A'$. First, we note that we can replace $A$ (resp. $A'$) by a totally unimodular matrix respectively (cf.\ Remark \ref{rem:fund} (3)). Then, by \cite[Proposition 6.5]{Ox}, we know that $A\sim_{\Q}A'$. Here $A\sim_{\Q}A'$ means that there exists $P\in GL_d(\Q)$ and a matrix $D\in GL_n(\Q)$ such that $A'=PAD$, where $D$ is a product of a permutation matrix and a diagonal matrix. 
Its proof depends on Lemma 6.6.4 and Theorem 6.4.7 in \cite{Ox}. 
However, by reading these proofs in \cite{Ox} carefully, for totally unimodular matrices, it is easily showed that one can take  $P\in GL_d(\Z)$ and a signed permutation matrix $D\in GL_n(\Z)$ (cf.\ Remark \ref{rem:fund}) such that $A'=PAD$, i.e., $A\sim A'$. This completes the proof. 
\end{proof}
We will use this theorem to show that the isomorphism classes of affine hypertoric varieties $Y(A, 0)$ as conical symplectic varieties are bijective to the isomorphism classes of regular matroids (see Theorem \ref{thm:classification}). Finally, we consider some subclasses of regular matroids. 
\begin{definition}(Graphic matroids and cographic matroids)\\
Let $M$ be a matroid. We call $M$ a {\it graphic matroid} if $M$ is isomorphic to some matroid $M(G):=M(A_G)$, where $G$ is some finite  graph and $A_G$ is its signed incidence matrix with respect to some (or equivalently any)  orientation (cf.\ Example \ref{eg:quiver}). 

If a matroid $M$ is the dual matroid of some graphic matroid, we call $M$ {a} {\it cographic matroid}. 
\end{definition}
\begin{remark}\label{rem:graphic} \hspace{2pt}\\
\begin{itemize}
\vspace{-5mm}
\item[(1)]By the similar reason as we mentioned in Remark \ref{rem:quiver1} (2), the isomorphism class of $M(A_G)$ doesn't depend on the choice of the  orientation of $G$. 
\item[(2)]It is known that a matroid $M$ is graphic and cographic if and only if $M\cong M(G)$, where $G$ is some planar graph (cf.\ \cite[Theorem 5.2.2]{Ox}). In this case, the dual matroid $M(G)^*$ is isomorphic to the associated matroid to the {\it geometric dual graph} $G^*$ (see  \cite[Proposition 5.2.1]{Ox}). Well-known examples of non-planar graphs are complete graph $K_5$ on 5 vertices and a special bipartite graph $K_{3,3}$. 
In fact, by Kuratowski's theorem, a graph is planar if and only if it has no minor isomorphic to $K_5$ or $K_{3,3}$ (cf.\ \cite[Theorem 2.3.8]{Ox}).  
\item[(3)]Many regular matroids are graphic or cographic. Actually, by Seymour's decomposition theorem (\cite{Sey}), all regular matroids are obtained as ``sums'' (precisely, 1-sum, 2-sum ,and 3-sum) of some graphic matroids, cographic matroids and an exceptional matroid $R_{10}$.  
\end{itemize}
\end{remark}

The necessary and sufficient condition for two graphic matroids to be isomorphic to each other is given as the following. 
\begin{theorem}{\rm (Whitney's 2-isomorphism theorem cf.\ \cite[Theorem 5.3.1]{Ox})}\label{thm:Whitney}

Let $G_1$ and $G_2$ be two finite graphs (without isolated vertices). Then the following are equivalent: 
\begin{itemize}
\item[(1)]$M(G_1)\cong M(G_2)$. 
\item[(2)]$G_2$ can be transformed into $G_1$ by a sequence of the following two operations (we describe these operations by the example below, for the more precise definition, see \cite[section 5.3]{Ox})
\begin{itemize}
\item[(i)]The vertex identification and the vertex cleaving.
\item[(ii)]The Whitney twist
\end{itemize}
\begin{minipage}{0.3\hsize}
\[\begin{tikzcd}[row sep=tiny, column sep=tiny]
\bullet\ar[dd, no head]\ar[dr, no head]&&\bullet\ar[dd, no head]\ar[dl, no head]\\
&\bullet&\\
\bullet\ar[ur, no head]&&\bullet\ar[ul, no head]
\end{tikzcd}\overset{(i)}{\sim}
\begin{tikzcd}[row sep=tiny, column sep=tiny]
\bullet\ar[dd, no head]\ar[dr, no head]&&&\bullet\ar[dd, no head]\ar[dl, no head]\\
&\bullet&\bullet&\\
\bullet\ar[ur, no head]&&&\bullet\ar[ul, no head]
\end{tikzcd}\]
\end{minipage}
\begin{minipage}{0.1\hsize}
\ \ \ \ \ \ \ {,} 
\end{minipage}
\begin{minipage}{0.3\hsize}
\[\begin{tikzcd}[row sep=tiny, column sep=tiny]
\bullet\ar[d, no head]\ar[r, no head]&\bullet\ar[dl, no head]\ar[d, no head]\ar[dr, no head]\ar[r, no head]&\bullet\ar[d, no head]\\
\bullet\ar[r, no head]&\bullet\ar[r, no head]&\bullet\\
\end{tikzcd}\overset{(ii)}{\sim}
\begin{tikzcd}[row sep=tiny, column sep=tiny]
\bullet\ar[d, no head]\ar[r, no head]&\bullet\ar[dl, no head]\ar[d, no head]\ar[r, no head]&\bullet\ar[d, no head]\ar[dl, no head]\\
\bullet\ar[r, no head]&\bullet\ar[r, no head]&\bullet\\
\end{tikzcd}.
\]
\end{minipage}
\end{itemize}  
\end{theorem}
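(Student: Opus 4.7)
The plan is to prove the two directions separately, with the hard direction relying on a reduction to the 3-connected case.

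Easy direction (2) $\Rightarrow$ (1): I would verify that each operation preserves the graphic matroid under the obvious identification of edge sets. For vertex identification/cleaving at a cut vertex $v$, every cycle of the graph is contained in a single block meeting at $v$, so these operations do not alter which edge subsets are cycles. For the Whitney twist about a 2-separator $\{u,v\}$ with sides $H_1, H_2$, any cycle $C$ of $G$ is either contained in one side or meets the separator in exactly one path through each of $H_1$ and $H_2$; in the latter case, swapping $u$ and $v$ on $H_2$ reverses the direction of traversal but preserves the fact that $C$ closes up as a cycle. Hence the circuit families of $M(G_1)$ and $M(G_2)$ coincide as subsets of the common edge set.

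Hard direction (1) $\Rightarrow$ (2): I would proceed by induction on connectivity.

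\emph{Step 1 (connected components).} Since the graphic matroid of a disconnected graph is the direct sum of the graphic matroids of its components, and $M(G_1) \cong M(G_2)$ identifies indecomposable summands, I can match components of $G_1$ and $G_2$. Using vertex identification to glue and vertex cleaving to split, I reduce to the case where both graphs are connected.

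\emph{Step 2 (block decomposition).} For a connected graph, the graphic matroid is the direct sum over blocks (maximal $2$-connected subgraphs). The matroid isomorphism matches blocks of $G_1$ to blocks of $G_2$. Vertex identification/cleaving can be applied at the cut vertices of the block-tree to arrange that the two block-trees coincide as trees decorated with matching blocks. So I reduce to the case where $G_1, G_2$ are $2$-connected.

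\emph{Step 3 ($2$-separations).} For a $2$-connected graph that is not $3$-connected, choose a $2$-separation $\{u,v\}$ splitting $G$ into $H_1, H_2$. The matroid structure determines the partition of edges into $H_1$ and $H_2$ but does not distinguish the two possible gluings of $H_2$ onto $\{u,v\}$; the ambiguity is precisely a Whitney twist. Hence by repeated Whitney twists I can further reduce to the case where each remaining irreducible piece of the Tutte $3$-block decomposition is either $3$-connected, a cycle, or a bond (multi-edge).

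\emph{Step 4 ($3$-connected case).} Here I invoke Whitney's theorem: if $G$ is $3$-connected with at least four vertices, then $M(G)$ determines $G$ up to isomorphism. The key point is that the cocircuits of $M(G)$ given by the edges incident to a single vertex can be characterized matroidally (for instance as the non-separating cocircuits, i.e., those whose deletion leaves a connected matroid), so the vertex set and incidence relation can be recovered from $M(G)$ alone.

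The main obstacle is Step 4, the $3$-connected case, because it requires a delicate combinatorial argument characterizing the vertex cocircuits intrinsically within the matroid; once this is done, Steps 1--3 are reductions that assemble the local isomorphisms into a sequence of the allowed operations.
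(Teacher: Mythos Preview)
The paper does not prove this statement at all: Theorem~\ref{thm:Whitney} is quoted as a classical result with a reference to Oxley \cite[Theorem 5.3.1]{Ox}, and no argument is given in the paper. So there is no ``paper's own proof'' to compare against; the theorem is used as a black box (specifically, it is invoked to deduce Corollary~\ref{cor:quiver} from Theorem~\ref{thm:classification}).

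That said, your outline is the standard route to Whitney's theorem and is broadly correct. The easy direction is fine. For the hard direction, your Steps 1--3 correctly reduce to the $3$-connected pieces via the block/Tutte decomposition, and Step~4 is indeed the crux. One caution: in Step~3 you should be more careful that the matroid determines not just the edge partition at a $2$-separation but the entire Tutte $3$-block tree (with marker edges), so that the ambiguity at each internal tree edge is exactly a Whitney twist; and in Step~4 the characterization of vertex cocircuits as the non-separating cocircuits requires the graph to be simple and $3$-connected with at least four vertices, so the small cases (cycles and bonds arising as $3$-blocks) need to be handled separately, which you do mention. With those details filled in, your sketch matches the proof in \cite{Ox}.
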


\section{The universal Poisson deformation space of hypertoric varieties}
In this section, we will construct explicitely the universal Poisson deformation space of a smooth (resp. affine) hypertoric variety $Y(A, \alpha)$ (resp. $Y(A, 0)$) associated to a unimodular matrix $A$. 
First, we review (the universal) Poisson deformation space of general symplectic varieties, and we state a Namikawa's result which claims the existence of the universal Poisson deformation space of conical symplectic varieties and  its projective crepant resolutions. Also, these universal Poisson deformation spaces admit a certain commutative diagram (cf.\ Theorem \ref{Namikawa}). 
For a Poisson variety $(Y, \{-, -\}_0)$ and an affine scheme $(B, 0)$ with fixed point 0, we call a Poisson $B$-scheme $(\calY, \{-, -\})$ a {\it Poisson deformation (space)} of $Y$ if $\calY\to B$ is flat, its each fiber is a Poisson scheme, and the central fiber is isomorphic to $(Y, \{-, -\}_0)$ as a Poisson variety. 
Next, we define the universal Poisson deformation.  
\begin{definition}{\rm (The universal Poisson deformation)}\\
A Poisson deformation $(\calY, \{-, -\})\to B$ of a Poisson variety $(Y, \{-, -\}_0)$ is {\it universal at 0} if the following holds:
 
For each infinitesimal Poisson deformation $(\calX, \{-, -\}')\to\Spec A$ of $Y$, i.e., $(A, \germ_A)$ is an Artinian algebra with the residue field $\C$, there exists a unique morphism $f: \Spec A\to B$ such that $f(\germ_A)=0$ and 
\[\begin{tikzcd}
\calX\ar[r]\ar[d]&\calY\ar[d]\\
\Spec A\ar[r, "f"]&B
\end{tikzcd}\]
is Cartesian.
\end{definition}   
If the universal Poisson deformation exists, the completion of it at $0$ is unique up to isomorphism. For a conical symplectic variety $Y_0$ and its projective symplectic resolution $\pi: Y\to Y_0$, the existence of the universal Poisson deformation space of them is known by Namikawa as below. Moreover, it is known that $\Cstar$-action on $Y_0$ is uniquely extended to $Y$ (cf.\ \cite[Proposition A.7]{NamFlops}). Then the completion of a universal Poisson deformation is uniquely algebraized to $\calY\to H^2(Y, \C)$ (resp. $\calY_0\to H^2(Y, \C)$), i.e., this is the $\Cstar$-equivariant universal Poisson deformation (for the detail, see \cite[section 4]{NamFlops} and \cite[section 5.4]{NamPDaffine}). In this paper, we say $\calY\to H^2(Y, \C)$ (resp. $\calY_0\to H^2(Y, \C)$) is the universal Poisson deformation of $Y$ (resp. $Y_0$). Namikawa showed {that} two universal deformation spaces  $\calY\to H^2(Y, \C)$ and $\calY_0\to H^2(Y, \C)$ are related to each other as the following. 
\begin{theorem}{\rm (Namikawa \cite{NamPDbir})}\label{Namikawa}\\
Let $Y_0$ be a conical symplectic variety and we assume that there exists a projective symplectic resolution $\pi : Y \to Y_0$.
Then there exists the universal Poisson deformation space $\calY\to H^2(Y, \C)$ (resp. $\calY_0\to H^2(Y, \C)/W$) of $Y$ (resp. $Y_0$), and they satisfy the following $\Cstar$-commutative diagram.  
\[
\begin{tikzcd}[row sep=tiny, column sep=tiny, contains/.style = {draw=none,"\in" description,sloped}, icontains/.style = {draw=none,"\ni" description,sloped}]
&Y\ar[dl, hook']\ar[rr, "\pi"]\ar[dd]&&Y_0\ar[dl, hook'] \ar[dd]\\
\calY\ar[rr, crossing over, "\Pi" near end]\ar[dd, "\overline{\mu}"]&&\calY_0&\\
&0\ar[dl, icontains]\ar[rr, mapsto]&&\overline{0}\ar[dl, icontains]\\
H^2(Y, \C)\ar[rr, "\psi"]&&H^2(Y, \C)/W \ar[from=uu, crossing over, "\overline{\mu}_W" near start]&
\end{tikzcd},
\]
where $\psi$ is the Galois cover by a certain finite group $W$, which acts linearly on $H^2(Y, \C)$. 
\end{theorem}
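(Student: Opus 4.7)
The plan is to construct each universal family separately and then assemble the diagram, following the deformation-theoretic strategy of \cite{NamPDbir}. The four ingredients to establish are: pro-representability of the Poisson deformation functor, formal smoothness with tangent space $H^2(Y,\C)$, algebraization via the conical $\Cstar$-action, and descent to a $W$-quotient on the singular side.

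First I would treat the smooth resolution $Y$. The Poisson deformation functor $\mathrm{PD}_Y$ on Artin local $\C$-algebras is pro-representable, and its tangent space is canonically $H^2(Y,\C)$ via the period map sending a deformation to the class of its symplectic form. The crucial input is that $\mathrm{PD}_Y$ is unobstructed for smooth symplectic varieties (a Kaledin--Verbitsky type statement), so the pro-representing hull is formally smooth over the completion of $H^2(Y,\C)$ at $0$. The conical $\Cstar$-action on $Y_0$ extends uniquely to $Y$ by \cite[Proposition A.7]{NamFlops}, equips the formal family with a compatible grading in positive weights, and therefore permits algebraization to an honest algebraic family $\calY \to H^2(Y,\C)$ with the standard linear $\Cstar$-action on the base.

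Next I would build $\calY_0$ on the singular side. Since $\pi$ is a crepant projective resolution, $R\pi_*\calO_{\calY}=\pi_*\calO_{\calY}$ by Grauert--Riemenschneider, and this sheaf of $\calO_{H^2(Y,\C)}$-algebras is flat and deforms $\C[Y_0]$. Setting $\calY_0 := \mathrm{Spec}\,\pi_*\calO_{\calY}$ produces the comparison morphism $\Pi: \calY\to\calY_0$ over $H^2(Y,\C)$. The issue is that different parameters can yield Poisson-isomorphic fibers of $\calY_0$, so this family is not yet universal. One introduces the Namikawa--Weyl group $W$ encoding these identifications: $W$ is assembled from local Weyl groups attached to the transverse du Val slices along the codimension-$2$ symplectic leaves of $Y_0$, each of which acts on local $H^2$ by reflections, and one glues these actions to obtain a global linear action of $W$ on $H^2(Y,\C)$. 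Taking the quotient yields the candidate $\calY_0 \to H^2(Y,\C)/W$ for the universal Poisson deformation of $Y_0$.

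The main obstacle is controlling $W$ and proving universality of the descended family. One must show: (a) $W$ is finite, which uses finiteness of projective crepant resolutions and the chamber decomposition of the movable cone coming from the MMP; (b) the Poisson isomorphism classes of fibers of $\calY_0$ are exactly $W$-orbits in $H^2(Y,\C)$, so that the descent $\calY_0 \to H^2(Y,\C)/W$ is well defined as an algebraic family; and (c) the descended family is universal, for which it suffices to show versality plus the tangent identification $T_{\overline 0}(H^2(Y,\C)/W)\cong H^2(Y,\C)^W = H^2(Y,\C)$ after unwinding the Poisson rigidity of $Y$ relative to $Y_0$. With these in hand the $\Cstar$-equivariance is automatic from the graded algebraization, and the commutative diagram in the statement assembles from the universal properties.
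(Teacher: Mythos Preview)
The paper does not prove this theorem; it is quoted from \cite{NamPDbir} (and the surrounding papers \cite{NamFlops}, \cite{NamPDaffine}, \cite{NamWeylgrp}) and used as a black box. So there is no proof in the paper to compare your proposal against. Your outline is, in broad strokes, the strategy Namikawa actually carries out: unobstructedness and pro-representability of $\mathrm{PD}_Y$ with tangent space $H^2(Y,\C)$, algebraization via the conical $\Cstar$-action, pushforward to get a family deforming $Y_0$, and then the Galois descent by the group $W$ built from the transverse du Val slices along the codimension-$2$ leaves.

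There is one genuine error in your sketch. In step (c) you write that universality on the singular side follows from the tangent identification $T_{\overline 0}\bigl(H^2(Y,\C)/W\bigr)\cong H^2(Y,\C)^W = H^2(Y,\C)$. Neither equality holds in general: $W$ acts nontrivially on $H^2(Y,\C)$ (indeed by reflections), so $H^2(Y,\C)^W\neq H^2(Y,\C)$; and the tangent space of $V/W$ at the origin for a reflection group $W$ is not $V^W$ either (by Chevalley--Shephard--Todd the quotient is again an affine space, with tangent space of the same dimension as $V$, but there is no such identification with the invariants). Namikawa's argument for universality of $\calY_0\to H^2(Y,\C)/W$ goes differently: one first shows directly that $\mathrm{PD}_{Y_0}$ is itself unobstructed (this is the hard input, using the symplectic singularity hypothesis), so both formal bases are smooth; then one compares the two functors via the period map and shows that $\mathrm{PD}_{Y_0}\to\mathrm{PD}_Y$ is a finite Galois cover with group $W$. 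The finiteness of $W$ is not deduced from the MMP here but from its explicit description as a product of Weyl groups of the transverse $ADE$ singularities (modulo monodromy), as the paper recalls immediately after the statement of the theorem.
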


We call $W$ the {\it Namikawa--Weyl group} of $Y_0$. As described by Namikawa in \cite[Section1]{NamWeylgrp}, we can describe $W$ explicitly in terms of the exceptional sets of $\pi$ as below. First, by \cite{Kalstratify}, the singular locus $(Y_0)_{\Sing}$ is stratified by smooth symplectic varieties. Let $\Sigma_{\codim\geq4}$ denote the union of strata of codimension 4 or higher, and define $\Sigma_{\codim2}:=(Y_0)_{\Sing}\setminus\Sigma_{\codim\geq4}$. Then, for each component $Z_k$ of the connected component decomposition $\Sigma_{\codim2}=\bigsqcup_{k=1}^{s}{Z_k}$, one can consider a transversal slice $S_{\ell_k}$ through a point $x\in Z_k$. 
Since $S_{\ell_k}=S_{\Delta_{\ell_k}}$ is a symplectic surface, i.e., the $ADE$ type surface singularity with the corresponding Dynkin diagram $\Delta_{\ell_k}$, so $\pi:Y\to Y_0$ is locally (at $x$) isomorphic to $p\times id: \tilde{S}_{\ell_k}\times\C^{2m-2}\to S_{\ell_k}\times\C^{2m-2}$, where $2m=\dim Y_0$ and $p$ is the minimal resolution of $S_{\ell_k}$.   
We consider all $(-2)$-curves $C_i \ (1 \leq i \leq \ell_k)$ in $\tilde{S}_{\ell_k}$ and set
\[
\Phi_{\ell_k}:=\Set{\sum_{i=1}^{\ell_k}{d_i[C_i]} \ | \ d_i \in \Z \ \text{s.t.} \ \left(\sum_{i=1}^{\ell_k}{d_i[C_i]}\right)^2=-2} \subset H^2(\tilde{S}_{\ell_k}, \R).
\]
Then, $\Phi_{\ell_k}$ defines the corresponding $ADE$ type root system in $H^2(\tilde{S}_{\ell_k}, \R)$, and the  associated usual Weyl group $W_{S_{\ell_k}}$ acts on $H^2(\tilde{S}_{\ell_k}, \R)$. 
However this description is local at each point on $Z_k$, and the number of irreducible components of $\pi^{-1}(Z_k)$ may be less than $\ell_k$ globally. In fact, the following homomorphism is defined by the monodromy: 
\[
\rho_k : \pi_1(Z_k) \to \Aut(\Delta_{\ell_k}),
\]
where $\Delta_{\ell_k}$ is the associated Dynkin diagram and $\Aut(\Delta_{\ell_k})$ is its graph automorphism group. 
Then, we can define the subgroup of $W_{S_{\ell_k}}$ as the following: 
\[
W_{Z_k}:=W_{S_{\ell_k}}^{\Image\rho_k}:=\{\sigma \in W_{S_{\ell_k}} \ | \ \sigma\iota=\iota\sigma \ (\iota \in \Image\rho_k)\}.
\] 
Finally, taking the direct product of them, we get the Namikawa--Weyl group.  
\[
W:=\prod_{k}W_{Z_k}.
\]  

The aim of this section is to determine the commutative diagram in Theorem \ref{Namikawa} for the symplectic resolution $\pi: Y(A, \alpha) \to Y(A, 0)$ of each affine hypertoric variety $Y(A, 0)$ (cf.\ Theorem \ref{thm:Mainthm}).
\begin{proposition}For a generic $\alpha$, $\overline{\mu}_\alpha: X(A, \alpha)\to \C^d$ and $\overline{\mu}_0: X(A, 0)\to \C^d$ are Poisson deformations of $Y(A, \alpha)$ and $Y(A, 0)$ respectively. 
\end{proposition}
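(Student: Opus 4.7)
The plan is to verify three things separately: flatness of $\overline{\mu}_\alpha$ and $\overline{\mu}_0$, identification of their central fibres, and the fact that the base coordinate functions pull back to the Poisson centre of the total space. For flatness, first observe that the moment map $\mu: \C^{2n} \to \C^d$ factors as $\C^{2n} \xrightarrow{\Psi} \C^n \xrightarrow{A} \C^d$ where $\Psi$ is flat (cf.\ the proof of Lemma \ref{lem:CI}) and $A$ is a surjective linear map, so $\mu$ itself is flat. For generic $\alpha$, Corollary \ref{cor:hypertoricsymplectic} and Lemma \ref{lem:stable set HT} give $X(A, \alpha)$ smooth and the quotient map $(\C^{2n})^{\alpha-ss} \to X(A, \alpha)$ a principal $\T_\C^d$-bundle, hence flatness of $\mu$ descends to $\overline{\mu}_\alpha$. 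For $\alpha = 0$, the invariant ring $\C[\bm{z}, \bm{w}]^{\T_\C^d}$ is Cohen--Macaulay by the Hochster--Roberts theorem, every fibre of $\overline{\mu}_0$ has dimension $2n-2d$ by Proposition \ref{prop:birational}, and the base $\C^d$ is smooth, so miracle flatness yields flatness of $\overline{\mu}_0$.

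Next, the central fibres are identified directly from the definitions. The ideal of $0 \in \C^d$ pulls back to $(\mu_1, \ldots, \mu_d)$ in $\C[\bm{z}, \bm{w}]$, so the scheme-theoretic fibre of $\overline{\mu}_\alpha$ over $0$ is $\mu^{-1}(0)/\hspace{-3pt}/_\alpha \T_\C^d = Y(A, \alpha)$; flatness ensures that nothing is lost in passing from set-theoretic to scheme-theoretic equality. The analogous argument applies to $\overline{\mu}_0$, giving $Y(A, 0)$ as the central fibre.

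For the Poisson compatibility, the Poisson structures on $X(A, \alpha)$ and $X(A, 0)$ are induced from the symplectic Poisson bracket on $\C^{2n}$ by restriction to the $\T_\C^d$-invariant subring, so it suffices to show that the pullbacks of coordinate functions $u_i$ on $\C^d$ lie in the Poisson centre of the $\T_\C^d$-invariant ring. But these pullbacks are precisely the components $\mu_i = \sum_j z_j w_j a_{ij}$ of the moment map, whose Hamiltonian vector fields generate the $i$-th coordinate one-parameter subgroup of $\T_\C^d$. Since any $\T_\C^d$-invariant $f$ is annihilated by this vector field, $\{\mu_i, f\} = 0$, which is exactly the required $\calO_{\C^d}$-bilinearity of the bracket on the total space.

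The main technical obstacle is flatness in the affine case $\alpha = 0$, where $X(A, 0)$ is singular so one cannot simply reduce to a smooth-point computation; the combination of Hochster--Roberts (giving Cohen--Macaulayness of the invariant ring), equidimensionality of fibres from Proposition \ref{prop:birational}, and miracle flatness over the smooth base $\C^d$ is what makes this step work cleanly without explicit computation in the invariant ring.
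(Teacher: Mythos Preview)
Your proof is correct and for $\overline{\mu}_0$ proceeds essentially as the paper does: Cohen--Macaulayness of the invariant ring via Hochster--Roberts, equidimensionality of fibres from Proposition \ref{prop:birational}, and miracle flatness over the smooth base.

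For $\overline{\mu}_\alpha$ you take a genuinely different route. The paper again invokes miracle flatness, using smoothness of $X(A,\alpha)$ together with the fact that every fibre $Y(A,(\alpha,\xi))$ has dimension $2n-2d$ (Corollary \ref{cor:hypertoricsymplectic}). You instead observe that $\mu$ itself is flat and that flatness descends along the faithfully flat quotient map $(\C^{2n})^{\alpha\text{-}ss}\to X(A,\alpha)$: since the composite $\overline{\mu}_\alpha\circ q = \mu|$ is flat and $q$ is faithfully flat, $\overline{\mu}_\alpha$ is flat. This descent argument is cleaner in that it never appeals to the dimension of the individual fibres, and it makes transparent why the smooth case is easier than the affine one. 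The paper's approach, on the other hand, treats both cases uniformly via the same flatness criterion.

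Your explicit verification that the components $\mu_i$ lie in the Poisson centre of the invariant ring is a point the paper passes over quickly (it simply notes that each fibre is symplectic, hence Poisson). Your argument via the Hamiltonian vector fields of the $\mu_i$ generating the torus action is the correct way to see that the bracket is $\calO_{\C^d}$-bilinear, and it makes the Poisson $B$-scheme structure honest rather than implicit.
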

\begin{proof}
First, we show that $\overline{\mu}_\alpha: X(A, \alpha)\to\C^d$  is flat. By Corollary \ref{cor:hypertoricsymplectic}, each fiber $Y(A, (\alpha, \xi))$ of $\overline{\mu}_\alpha$ has the same dimension $2n-2d$. Then, by the flat criterion \cite[Corollary 14.128.]{GW} and smoothness of $X(A, \alpha)$, one can show that $\overline{\mu}_\alpha$ is flat. 
Next, we show that $\overline{\mu}_0: X(A, \alpha)\to\C^d$  is flat. By Corollary \ref{cor:hypertoricsymplectic} and Proposition \ref{prop:birational}, each fiber $Y(A, (\alpha, \xi))$ of $\overline{\mu}_\alpha$ has the same dimension $2n-2d$. By the Hochster's theorem (\cite{HR}), $X(A, 0)=\Spec\C[\bm{z}, \bm{w}]^{\T_\C^d}$ is Cohen--Macaulay, so we can also apply the flat criterion to this case and prove the flatness of $\overline{\mu}_0$.  
Moreover, by Theorem \ref{thm:hyperfund}, each fiber $Y(A, (\alpha, \xi))$ (resp. $Y(A, (0, \xi))$) of $\overline{\mu}_{\alpha}$ (resp. $\overline{\mu}_0$) is a symplectic variety, in particular a Poisson variety. Thus, $\overline{\mu}_\alpha$ (resp.  $\overline{\mu}_0$) is a Poisson deformation of $Y(A, \alpha)$ (resp. $Y(A, 0)$). 
\end{proof}
Now, we have a (smooth) Lawrence toric variety $X(A, \alpha)$ and a flat map $\overline{\mu}_\alpha: X(A, \alpha) \to \C^d$ which is a Poisson deformation of $Y(A, \alpha)$. Although it may be well-known for experts that $\overline{\mu}_\alpha$ gives the universal Poisson deformation of $Y(A, \alpha)$ (actually this fact is mentioned in \cite[Example 2.1]{BPW1} without proof), we explain a proof.
First, each fiber $(({\mu}^{-1}(\xi))^{\alpha-st}/\T_\C^d, \ \om_\xi)$ of $\overline{\mu}_\alpha$ is also a smooth symplectic variety. Since we have a conical $\Cstar$-action {of positive weight} (cf.\ Definition \ref{def:conical} (i)), we can apply the Slodowy's lemma \cite[Remark 4.3]{Slo} to $\overline{\mu}_\alpha$, and we can deduce $\overline{\mu}_\alpha : X(A, \alpha)\to \C^d$ is a trivial $C^{\infty}$-fiber bundle. Then by using this trivialization, one can easily show that the Kodaira-Spencer map associated to $\overline{\mu}_\alpha$ is the same as the differential $(dp)_0$ at 0 of the associated period map $p: \C^d\to H^2(Y(A, \alpha), \C) : \xi \mapsto [\om_\xi]$. In particular, $\overline{\mu}_\alpha$ is the universal Poisson deformation space of $Y(A, \alpha)$ if and only if $(dp)_0$ is an isomorphism. Then, by the algebro-geometric version of the Duistermaat--Heckman theorem (\cite[Proposition 3.2.1]{Lo}), the period map $p$ is equal to the complexification $\kappa_2\otimes_\Z\C$ of so-called the Kirwan map $\kappa_2 : \Z^d \to H^2(Y(A, \alpha), \Z)$. The Kirwan map $\kappa_2$ is defined as $\xi \mapsto c_1(L_\xi)$, where $L_\xi=\mu^{-1}(0)^{\alpha-st}\times_{\T_\C^d}\C:=(\mu^{-1}(0)^{\alpha-st}\times\C)/\T_\C^d$ is the associated line bundle to the character $\xi\in\Z^d=\Hom(\T_\C^d, \Cstar)$. Then, since the Kirwan map is linear, we only have to show that the Kirwan map is an isomorphism.    
In \cite{Kocohomology}, Konno showed $\kappa_2$ is always an isomorphism under the condition that $B$ doesn't have any zero row vectors. As a consequence, we obtain the following. 
\begin{proposition}\label{prop:UPDforsmooth}
Let $\alpha\in\Z^d$ be a generic element. Assume $B$ doesn't have any zero row vectors. Then, the map $\overline{\mu}_\alpha : X(A, \alpha) \to \C^d$ gives the univeral Poisson deformation space at 0 of a smooth hypertoric variety $Y(A, \alpha)$.
\end{proposition}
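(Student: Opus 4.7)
The plan is to reduce the universality to a purely cohomological statement about the Kirwan map, and then invoke Konno's theorem. Since $\overline{\mu}_\alpha : X(A,\alpha)\to\C^d$ is already known to be a Poisson deformation of $Y(A,\alpha)$, and since its base $\C^d$ has the same dimension as the expected base $H^2(Y(A,\alpha),\C)$ of the universal Poisson deformation, what I have to do is identify $\C^d$ with $H^2(Y(A,\alpha),\C)$ in such a way that $\overline{\mu}_\alpha$ pulls back the universal family. Equivalently, it suffices to show that the Kodaira--Spencer map $\mathrm{KS}_0 : T_0\C^d \to \mathrm{PD}^1(Y(A,\alpha))$ of $\overline{\mu}_\alpha$ at the origin is an isomorphism onto $H^2(Y(A,\alpha),\C)$.

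First I would trivialize the family smoothly. The conical $\Cstar$-action on $X(A,\alpha)$ is of positive weight on the base $\C^d$ (cf.\ Remark \ref{rem:comm diagram}, where $\Cstar$ acts on $\C^d$ by $s\cdot v=s^{-2}v$), and every fiber of $\overline{\mu}_\alpha$ is smooth by Corollary \ref{cor:hypertoricsymplectic}. Under these hypotheses Slodowy's lemma \cite[Remark 4.3]{Slo} applies: $\overline{\mu}_\alpha$ is a trivial $C^\infty$-fiber bundle. Using this trivialization, the family of symplectic forms $\om_\xi$ on the fibers can be transported to a family of closed $2$-forms on the fixed manifold $Y(A,\alpha)$, producing a period map $p : \C^d \to H^2(Y(A,\alpha),\C),\; \xi \mapsto [\om_\xi]$. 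A standard computation (the same one carried out by Namikawa in the conical symplectic setting) then shows that the Kodaira--Spencer map of $\overline{\mu}_\alpha$ coincides with $(dp)_0$, so the universality reduces to showing that $(dp)_0$ is an isomorphism.

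Next I would identify $p$ with an algebraic map already understood in hypertoric geometry. By the algebro-geometric version of the Duistermaat--Heckman theorem \cite[Proposition 3.2.1]{Lo}, the period map $p$ equals the complexification $\kappa_2\otimes_\Z\C$ of the Kirwan map $\kappa_2 : \Z^d\to H^2(Y(A,\alpha),\Z),\; \xi\mapsto c_1(L_\xi)$, where $L_\xi := \mu^{-1}(0)^{\alpha-st}\times_{\T_\C^d}\C$ is the line bundle associated to the character $\xi$. Since $\kappa_2$ is linear, $(dp)_0$ is just $\kappa_2\otimes_\Z\C$ itself, and it remains to check that $\kappa_2$ is an isomorphism.

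For the final step I would invoke Konno's theorem \cite{Kocohomology}, which asserts precisely that for a unimodular matrix $A$ whose Gale dual $B$ has no zero row vectors, the Kirwan map $\kappa_2 : \Z^d \to H^2(Y(A,\alpha),\Z)$ is an isomorphism. This is exactly the standing hypothesis of the proposition, so the chain of identifications above completes the argument. The main obstacle I anticipate is the Kodaira--Spencer/period-map identification in the second step: although it is routine in the $C^\infty$ category once trivialization is available, one has to be careful that the complex-analytic family actually gives an \emph{algebraic} universal deformation, which is where the $\Cstar$-weight positivity (allowing algebraization of the formal universal deformation as in \cite[section 4]{NamFlops}) does the real work.
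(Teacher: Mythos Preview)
Your proposal is correct and follows essentially the same approach as the paper: Slodowy's lemma for the $C^\infty$-trivialization, identification of the Kodaira--Spencer map with $(dp)_0$, the algebro-geometric Duistermaat--Heckman theorem \cite[Proposition 3.2.1]{Lo} to equate $p$ with $\kappa_2\otimes_\Z\C$, and finally Konno's result \cite{Kocohomology} that $\kappa_2$ is an isomorphism when $B$ has no zero row vectors. Your added remark about algebraization via \cite[section 4]{NamFlops} is a reasonable elaboration of a point the paper leaves implicit.
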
 
\begin{remark}
As stated in Corollary \ref{cor:Breduction}, we can always assume that $B$ doesn't have any zero row vectors without changing the $\Cstar\times\T_\C^{n-d}$ isomorphism class of $Y(A, \alpha)$ as Poisson varieties. 
\end{remark}

Next, we will determine the universal Poisson deformation space of $Y(A, 0)$. 
First, we describe the Namikawa--Weyl group of $Y(A, 0)$ by analyzing the structure of codimension 2 singular locus $\Sigma_{\codim2}$.  
To state a known result on a stratification of $Y(A, 0)$ in \cite{PW}, we prepare some notations of hyperplane arrangements. We call $\calH$ a {\it central multi-arrangement} in $\R^{n-d}$ if $\calH=\{H_1, \ldots ,H_n\}$ is a multiset of hyperplanes through the origin in $\R^{n-d}$. A subset $F\subseteq\{1,2,\ldots,n\}$ is called a {\it flat} if it satisfies $F=\{i \ | \ \bigcap_{j\in F}{H_j}\subseteq H_i\}$. For a flat $F$, we set $H_F:=\cap_{j\in F}{H_j}$ and define the {\it rank} of $F$ as $\rank F:={\codim} H_F$. In particular, each rank 1 flat is  corresponding to some indices set of parallel hyperplanes in $\calH$.  
From a matrix $B$, we can construct a central multi-arrangement $\calH_B$ by collecting hyperplanes $H_{\bm{b_i}}:=\{\bm{v}\in\R^{n-d} \ | \ \langle\bm{v}, \bm{b_i}\rangle=0\}$, where $\langle-,-\rangle$ is the standard inner product in $\R^{n-d}$. 

For a flat $F$, we consider the following exact sequence induced from\\ $\begin{tikzcd}[column sep=small]0\ar[r]&\Z^{n-d}\ar[r, "B"]&\Z^n\ar[r, "A"]&\Z^d\ar[r]&0\end{tikzcd}$ and the natural projection $\Z^n\twoheadrightarrow\Z^{|F|}$:
 \[
\begin{tikzcd}
0\ar[r]&\Z^{n-d}/H_F\ar[r, "{B}_F"]&\Z^{|F|}\ar[r, "{A}_F"]&\Z^d/\sum_{i \notin F}\Z\bm{a_i}\ar[r]&0
\end{tikzcd}.
\]
In \cite{PW}, for any $A$, they give a stratification of $Y(A, 0)$ by smooth strata. Also, they describe the slice of each stratum as the following. 
\begin{theorem}{\rm (\cite[Lemma 2.5]{PW})}\\
In the setting above, 
\[Y(A, 0)=\bigsqcup_{F: \text{flat}}{\mathring{Y}(A, 0)^F}\]
is a stratification by $2(n-d-\rank F)$ dimensional smooth strata $\mathring{Y}(A, 0)^F$ defined as the following: 
\[\mathring{Y}(A, 0)^F:=\{(\bm{z}, \bm{w}) \in \mu^{-1}(0) \ | \ (z_j, w_j)=0 \Leftrightarrow j \in F\}/\T_\C^d.\]
Moreover, the slice of each stratum $\mathring{Y}(A, 0)^F$ is $Y(A_F, 0)$. 
\end{theorem}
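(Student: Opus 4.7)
My plan is to analyze points of $Y(A,0)$ according to their \emph{zero pattern}, i.e.\ the set of indices where both coordinates vanish. For any $(\bm z,\bm w)\in\mu^{-1}(0)$, set $S(\bm z,\bm w):=\{j\in[n]\mid z_j=w_j=0\}$. The strategy has three steps: (i) show that the subsets of $[n]$ arising as $S(\bm z,\bm w)$ for a \emph{closed} $\T_\C^d$-orbit in $\mu^{-1}(0)$ are precisely the flats $F$ of $\calH_B$; (ii) compute the dimension of the resulting stratum using a restricted moment map; and (iii) exhibit a transverse slice as a copy of $Y(A_F,0)$.

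For step (i), I would invoke Gale duality together with the Hilbert--Mumford/closed-orbit analysis already carried out in the proof of Proposition~\ref{prop:birational}. For a fixed subset $S\subseteq[n]$, the orbit $\T_\C^d\cdot(\bm z,\bm w)$ with $z_j=w_j=0\iff j\in S$ is closed in $\mu^{-1}(0)=\mu^{-1}(0)^{0\text{-}ss}$ precisely when no one-parameter subgroup of $\T_\C^d$ sends the point to infinity or toward a degeneration of the zero pattern. Analyzing this as in Lemma~\ref{lem:stable set HT} shows that the set of such $S$ is exactly those satisfying the saturation condition $\bm a_i\in\sum_{j\notin S}\Q\bm a_j\Rightarrow i\in S$; by Gale duality (Remark~\ref{rem:dual matroid}), this condition is equivalent to $S$ being a flat of the multi-arrangement $\calH_B$.

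For step (ii), fix a flat $F$ and consider the open subvariety
\[
U_F:=\{(\bm z,\bm w)\in\C^{2n}\mid z_j=w_j=0\ (j\in F),\ (z_j,w_j)\neq 0\ (j\notin F),\ \mu(\bm z,\bm w)=0\}.
\]
Through the identification $\C^{2n}\cap\{z_j=w_j=0:j\in F\}\cong\C^{2(n-|F|)}$, the restriction of $\mu$ is the moment map $\mu^{(F^c)}$ for the submatrix $A_{F^c}$. From the exact sequence in the statement one reads off $\rank A_{F^c}=d-|F|+\rank F$, so Lemma~\ref{lem:CI} applied to $A_{F^c}$ gives $\dim U_F=2(n-|F|)-(d-|F|+\rank F)$. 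The residual $\T_\C^d$-action on $U_F$ factors through $\T_\C^{d-|F|+\rank F}$ with finite (in fact trivial, by unimodularity and Corollary~\ref{cor:stabilizer}) stabilizers, so the quotient is smooth and further drops the dimension by $d-|F|+\rank F$; arithmetic gives the claimed $2(n-d-\rank F)$.

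For step (iii), which I expect to be the main obstacle, I would apply Luna's \'etale slice theorem at a closed orbit representing a point of $\mathring Y(A,0)^F$. The tangent space to $\mu^{-1}(0)$ at such a point decomposes, equivariantly for the stabilizer $H\subseteq\T_\C^d$ (which is the subtorus dual to $\Z^d/\sum_{i\notin F}\Z\bm a_i$), into tangent directions along the orbit, tangent directions along the stratum (which are unobstructed by $\mu$), and normal directions. The normal piece is a symplectic $H$-representation, and the quadratic part of the moment map along these normal directions is precisely $\sum_{j\in F}z_jw_j\bm a_j$ viewed in $\Z^d/\sum_{i\notin F}\Z\bm a_i$, i.e.\ the moment map attached to $A_F$ via the exact sequence in the statement. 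Hamiltonian reduction of this slice by $H$ therefore yields $Y(A_F,0)$. The delicate point is checking that the stabilizer is exactly the dual of the quotient in the sequence and is connected; this follows from unimodularity of $A$ (via Remark~\ref{rem:unimodular} and Corollary~\ref{cor:stabilizer}), which ensures there are no spurious finite group contributions that would otherwise require taking a quotient of $Y(A_F,0)$.
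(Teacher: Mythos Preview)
The paper does not prove this statement itself; it is quoted verbatim from \cite[Lemma~2.5]{PW} and used as input for the description of $\Sigma_{\codim 2}$ and the Namikawa--Weyl group. So there is no in-paper argument to compare against.

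Your overall architecture---stratify by zero pattern, compute dimensions via the restricted moment map, and identify the slice via Luna's \'etale slice theorem---is the standard route and is essentially what \cite{PW} does. Two points need repair. First, the saturation condition you write in step~(i), ``$\bm a_i\in\sum_{j\notin S}\Q\bm a_j\Rightarrow i\in S$'', is self-contradictory: any $i\notin S$ satisfies the hypothesis trivially but not the conclusion. The correct dual characterization (via Remark~\ref{rem:dual matroid}) is that $F$ is a flat of $M(B^T)$ iff $[n]\setminus F$ is a union of circuits of $M(A)$, i.e.\ every $\bm a_j$ with $j\notin F$ lies in $\Span_\Q(\bm a_k:k\notin F,\ k\neq j)$. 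You should also sharpen step~(i) to distinguish the zero pattern of a point from that of the unique closed orbit in its orbit closure: for instance with $A=(1\ 1)$ the point $(1,1,0,0)\in\mu^{-1}(0)$ has zero pattern $\emptyset$ but its orbit is not closed. Second, in step~(ii) the submatrix $A_{F^c}$ is generally not surjective onto $\Z^d$ (its rank is $d-|F|+\rank F$), so Lemma~\ref{lem:CI} must be applied to $A_{F^c}$ viewed as a map onto its image; your arithmetic already accounts for this, but the invocation as written does not. Step~(iii) is fine in outline.
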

To describe explicitly the codimension 2 singular locus $\Sigma_{\codim2}$ of $Y(A, 0)$, we interchange the row vectors $\bm{b_i}$ of $B$, and multiplying $\bm{b_i}$ by $\pm1$ if necessarily. Then, we can transform $B$ as the following form: 
\[\arraycolsep=4pt
B=\left(
\begin{array}{ccc}
&\text{\huge{$B^{(1)}$}}&\\
\hline
&\text{\huge{$B^{(2)}$}}&\\
\hline
&\vdots&\\
\hline
&\text{\huge{$B^{(s)}$}}&\\
\end{array}
\right), \ \ \ 
B^{(k)}=\begin{array}{rcccll}
	\ldelim({3}{4pt}[] &&\bm{b^{(k)}}&&\rdelim){3}{4pt}[]&\rdelim\}{3}{10pt}[$\ell_k$] \\
	&&\vdots&& \\
	&& \bm{b^{(k)}} && \\
	\end{array}\ \ , \arraycolsep=2pt
\]
where $\bm{b^{(k_1)}}\neq\pm\bm{b^{(k_2)}}$ if $k_1\neq k_2$. 
Since $B$ is unimodular, if  $k_1\neq k_2$, then $\bm{b^{(k_1)}}\neq\ell\bm{b^{(k_2)}}$ for any $\ell\in\Z\setminus\{0\}$. This means that each rank 1 flat of $\calH_B$ is given by the form $F_k:=\{m_{k-1}+1, \ldots ,m_k\}$, where $m_k:=\sum_{i=1}^k{\ell_i}$. Now we can describe the codimension 2 singular locus $\Sigma_{\codim2}$ of $Y(A, 0)$ as the following. 
\begin{corollary}\label{cor:codim2strata}
In the setting above, the codimension 2 singular locus $\Sigma_{\codim2}$ of $Y(A, 0)$ is 
\[\Sigma_{\codim2}=\bigsqcup_{k : \ell_k\geq2 }{\mathring{Y}(A, 0)^{F_k}}.\]
Additionally, the slice of $\mathring{Y}(A, 0)^{F_k}$ is the $A_{\ell_k-1}$ type surface singularity. In particular, the Namikawa--Weyl group $W$ of $Y(A, 0)$ is a subgroup of $W_B:=\gerS_{\ell_1} \times \cdots \times \gerS_{\ell_s}$. 
\end{corollary}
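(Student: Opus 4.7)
The plan is to combine the stratification theorem of Proudfoot--Webster (quoted just above) with the explicit recipe for the Namikawa--Weyl group (recalled after Theorem \ref{Namikawa}).

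First I would determine which strata $\mathring{Y}(A,0)^F$ contribute to $\Sigma_{\codim 2}$. By the Proudfoot--Webster stratification, $\dim \mathring{Y}(A,0)^F = 2(n-d-\rank F)$, so the codimension in $Y(A,0)$ equals $2\rank F$. Hence the codimension-2 strata are exactly those indexed by rank-$1$ flats of $\calH_B$. By the normalization of $B$ into blocks $B^{(k)}$ of identical rows $\bm{b^{(k)}}$, with $\bm{b^{(k_1)}}\neq\pm\bm{b^{(k_2)}}$ for $k_1\neq k_2$, the rank-$1$ flats are precisely $F_1,\ldots,F_s$. So
\[
\bigcup_{F:\rank F=1}\mathring{Y}(A,0)^F \;=\; \bigsqcup_{k=1}^{s}\mathring{Y}(A,0)^{F_k}.
\]

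Second I would identify the transverse slice $Y(A_{F_k},0)$ and cut down to $\ell_k\geq 2$. The exact sequence defining $A_{F_k},B_{F_k}$ becomes
\[
0 \longrightarrow \Z \xrightarrow{\ B_{F_k}\ } \Z^{\ell_k} \xrightarrow{\ A_{F_k}\ } \Z^{\ell_k-1}\longrightarrow 0,
\]
because $\Z^{n-d}/H_{F_k}\cong\Z$. The map $B_{F_k}$ is given by a single primitive column vector all of whose entries are equal, hence (up to sign, which is harmless by Lemma \ref{lem:fund}) equal to $(1,1,\ldots,1)^T$; unimodularity of $B$ forces the common entry to be $\pm 1$. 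Comparing with Example \ref{Amsurf} yields $Y(A_{F_k},0)\cong S_{A_{\ell_k-1}}$. When $\ell_k=1$ this is $\C^2$, so the slice is smooth and $\mathring{Y}(A,0)^{F_k}$ lies in the smooth locus; when $\ell_k\geq 2$ the slice is a genuine $A_{\ell_k-1}$ singularity, so the stratum belongs to $\Sigma_{\codim 2}$. This gives the claimed decomposition.

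Third I would apply Namikawa's recipe. Each stratum $\mathring{Y}(A,0)^{F_k}$ is the $\T_\C^d$-quotient of a smooth irreducible locally closed subvariety of $\C^{2n}$ and is therefore irreducible, so it is a single connected component $Z_k$ of $\Sigma_{\codim 2}$ with transverse surface singularity of type $A_{\ell_k-1}$. Consequently $W_{Z_k}\subseteq W_{S_{\ell_k}}=\gerS_{\ell_k}$, and taking the product over $k$ with $\ell_k\geq 2$,
\[
W \;=\; \prod_{k:\ell_k\geq 2} W_{Z_k} \;\subseteq\; \prod_{k=1}^{s}\gerS_{\ell_k} \;=\; W_B,
\]
as required (the factors with $\ell_k=1$ are trivial and do not affect the product).

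The main obstacle I anticipate is the identification of the slice as the $A_{\ell_k-1}$ singularity: the argument requires carefully tracking the unimodularity of $B$ through the quotient $\Z^{n-d}\twoheadrightarrow\Z^{n-d}/H_{F_k}$ to conclude that the common entry of $B_{F_k}$ is $\pm 1$ rather than an arbitrary integer, so that Example \ref{Amsurf} really applies; the irreducibility of each stratum and the transition from the product $\prod W_{Z_k}$ to the inclusion in $W_B$ are then essentially formal.
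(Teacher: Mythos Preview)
Your proposal is correct and follows essentially the same route as the paper: use the Proudfoot--Webster stratification to see that the codimension-$2$ strata are indexed by the rank-$1$ flats $F_k$, identify the slice $Y(A_{F_k},0)$ with $S_{A_{\ell_k-1}}$ via the induced exact sequence and Example \ref{Amsurf}, and then read off $W\subseteq W_B$ from the definition of the Namikawa--Weyl group. Your remarks on irreducibility of the strata and on why the common entry of $B_{F_k}$ is $\pm1$ make explicit points the paper leaves implicit, but the argument is the same.
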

\begin{proof}
By the theorem above, $\Sigma_{\codim2}\subseteq\bigsqcup_{k=1}^s{\mathring{Y}(A, 0)^{F_k}}$, and the slice of each $\mathring{Y}(A, 0)^{F_k}$ is given by $Y(A_{{F_k}}, 0)$. By definition and $H_{F_k}=H_{\bm{b^{(k)}}}$, $Y(A_{{F_k}}, 0)$ is the associated affine hypertoric variety to the following exact sequence:
\[\begin{tikzcd}
0\ar[r]&\Z^{n-d}/H_{\bm{b^{(k)}}}\ar[r, "{B}_{{F_k}}"]&\Z^{|{F_k}|}\ar[r, "{A}_{{F_k}}"]&\Z^d/\sum_{i \notin {F_k}}\Z\bm{a_i}\ar[r]&0
\end{tikzcd}.\]  
Then, $B_{{F_k}}$ can be represented as $(1,1, \ldots ,1)^T$ by an appropriate basis. Thus by Example \ref{Amsurf}, the corresponding affine hypertoric variety $Y(A_{F_k}, 0)$ is the $A_{\ell_k-1}$ type surface singularity, where if $\ell_k$=1, $Y(A_{F_k}, 0):=\C^2$. 
This proves $\Sigma_{\codim2}=\bigsqcup_{k : \ell_k\geq2 }{\mathring{Y}(A, 0)^{F_k}}$. In addition, by the definition of the Namikawa--Weyl group, we have $W\subseteq W_B$.  
\end{proof}

Below, we will show $W=W_B$. As remarked in Remark \ref{rem:comm diagram}, we have the following $\Cstar$-equivariant commutative diagram: 
\[
\begin{tikzcd}[row sep=tiny, column sep=tiny, contains/.style = {draw=none,"\in" description,sloped}, icontains/.style = {draw=none,"\ni" description,sloped}]
&Y(A, \alpha)\ar[dl, hook']\ar[rr, "\pi"]\ar[dd]&&Y(A, 0)\ar[dl, hook'] \ar[dd]\\
X(A, \alpha)\ar[rr, crossing over, "\Pi" near end]\ar[dd, "\overline{\mu}_\alpha"]&&X(A, 0)&\\
&0\ar[dl, contains]\ar[rr, mapsto]&&0\ar[dl, contains]\\
\C^d\ar[rr, equal]\ar[ur, icontains]&&\C^d \ar[from=uu, crossing over, "\overline{\mu}_0"near start]\ar[ur, icontains]&
\end{tikzcd}.
\]
As the following, if one can construct a good $W_B$-action on $X(A, 0)$ and $\C^d$, then one can show $W=W_B$ and construct the universal Poisson deformation of $Y(A, 0)$. 
\begin{lemma}\label{Wactionlem}
In the setting above, suppose that we could construct a $W_B$-action on $X(A, 0)$ and $\C^d$ satisfying the following properties:    
\begin{itemize}
\item[(i)]The $W_B$-action on $\C^d$ is linear {and effective}. Also, the $W_B$-action on $X(A, 0)$ preserves its Poisson structure.  
\item[(ii)]The $W_B$-action commutes with $\Cstar$-action on $\C^d$ and $X(A, 0)$. Also, $\overline{\mu}_0: X(A, 0) \to \C^d$ is $W_B$-equivariant. 
\item[(iii)]The induced $W_B$-action on $Y(A, 0) \subseteq X(A, 0)$ is trivial.  
\end{itemize}
Then, the induced $\Cstar$-equivariant morphism $\overline{\mu}_{W_B}: X(A, 0)/W_B \to \C^d/W_B$ gives the universal Poisson deformation space of $Y(A, 0)$.  

\end{lemma}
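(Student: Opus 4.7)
The plan is to first verify that $\overline{\mu}_{W_B}: X(A,0)/W_B \to \C^d/W_B$ is a Poisson deformation of $Y(A,0)$, and then to identify it with Namikawa's universal Poisson deformation $\calY_0 \to H^2(Y,\C)/W$ from Theorem \ref{Namikawa}. The identification proceeds by combining Proposition \ref{prop:UPDforsmooth} (which handles the smooth case $Y(A,\alpha)$) with a short degree comparison that forces $W = W_B$.

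Hypotheses (i)--(iii) immediately yield that $X(A,0)/W_B$ inherits a Poisson structure, that $\overline{\mu}_{W_B}$ is a well-defined $\Cstar$-equivariant Poisson morphism, and that its central fiber is $Y(A,0)$. The nontrivial point is flatness, for which I would invoke miracle flatness: since the action constructed in Proposition \ref{prop:HT-NWgrp} realizes $W_B = \gerS_{\ell_1} \times \cdots \times \gerS_{\ell_s}$ as a complex reflection group on $\C^d$, the base $\C^d/W_B$ is smooth by Chevalley--Shephard--Todd; the total space $X(A,0)/W_B$ is Cohen--Macaulay as a finite quotient in characteristic zero of the Cohen--Macaulay variety $X(A,0)$ (Hochster's theorem); and the fibers are equidimensional of dimension $2n-2d$.

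For the identification, Proposition \ref{prop:UPDforsmooth} realizes Namikawa's $\calY \to H^2(Y,\C)$ for $Y(A,\alpha)$ as $\overline{\mu}_\alpha: X(A,\alpha) \to \C^d$. Namikawa's construction of $\calY_0$ as a relative affinization then gives $\calY_0 \cong X(A,0)/W$ over $\C^d/W$, where $W \subseteq W_B$ by Corollary \ref{cor:codim2strata} and $W$ acts on $X(A,0)$ as the universal-property lift of its $\C^d$-action. By the $\Cstar$-equivariant uniqueness of such lifts (rigidified by the positive-weight $\Cstar$-action of Definition \ref{def:conical}), this abstract $W$-action must coincide with the restriction of the explicit $W_B$-action from (i). Applying universality of $\calY_0$ to the two Poisson deformations $\overline{\mu}_0: X(A,0) \to \C^d$ and $\overline{\mu}_{W_B}: X(A,0)/W_B \to \C^d/W_B$ yields unique classifying maps $\psi: \C^d \to \C^d/W$ and $f: \C^d/W_B \to \C^d/W$. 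Compatibility of $\overline{\mu}_0$ and $\overline{\mu}_{W_B}$ under the $W_B$-quotient, together with uniqueness of classifying maps, then forces $f \circ q_{W_B} = \psi$, where $q_{W_B}: \C^d \to \C^d/W_B$ denotes the quotient. Writing $q_{W_B} = \bar q \circ \psi$ through the natural surjection $\bar q: \C^d/W \to \C^d/W_B$ arising from $W \subseteq W_B$, and cancelling the epimorphism $\psi$, one obtains $f \circ \bar q = \mathrm{id}$. Hence $\bar q$ is injective; being a finite surjective morphism of smooth (hence normal) varieties in characteristic zero, it must be an isomorphism. This forces $|W_B|/|W| = 1$, so $W = W_B$, $f = \mathrm{id}$, and $\overline{\mu}_{W_B} \cong \calY_0$ is the universal Poisson deformation.

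The main technical obstacle is the identification step: showing that the two a priori distinct $W$-actions on $X(A,0)$ --- the one arising from universality of $\overline{\mu}_\alpha$ in Namikawa's construction, and the restriction of the explicit $W_B$-action from (i) to $W$ --- in fact coincide. This matching is precisely what justifies the uniqueness-of-classifying-maps argument underlying the degree comparison. The key rigidity tool is the uniqueness of lifts of group actions to a $\Cstar$-equivariant universal Poisson deformation whose $\Cstar$-action has positive weight, which removes all automorphism ambiguity and pins down the two actions on the nose.
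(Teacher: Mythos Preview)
Your core strategy---use universality to produce a map $\C^d/W_B \to \C^d/W$ and then a degree comparison to force $W = W_B$---is exactly the paper's. But you take an unnecessary detour that creates the very ``main technical obstacle'' you flag. The paper never tries to identify $\calY_0$ with $X(A,0)/W$ or to match an abstract $W$-action with the restriction of the explicit $W_B$-action. Instead it treats $\calY_0$ (denoted $\calX$ there) as a black box: knowing only that it exists and is universal, the paper applies its formal universal property directly to the completion of the Poisson deformation $X(A,0)/W_B \to \C^d/W_B$, obtaining a map $\widehat{\C^d/W_B} \to \widehat{\C^d/W}$ that fits into a triangle with $\widehat{\psi}$ and $\widehat{\psi}_B$. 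This is then algebraized using the $\Cstar$-action (a step you gloss over). The triangle commutativity, which you try to derive from matching $W$-actions, comes for free from the way the diagram is set up through $X(A,\alpha)$ and Theorem~\ref{Namikawa}.

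So your identification $\calY_0 \cong X(A,0)/W$ and the rigidity argument for uniqueness of $\Cstar$-equivariant lifts of group actions are both dispensable, and the latter is not fully justified as you state it. Conversely, your explicit verification that $X(A,0)/W_B \to \C^d/W_B$ is a flat Poisson deformation (via Chevalley--Shephard--Todd and miracle flatness) is something the paper's proof uses implicitly but does not spell out; that part of your write-up is a genuine addition.
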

\begin{proof}
Let $\calX \to \C^d/W$ be the universal Poisson deformation space of $Y(A, 0)$. By the assumption and Theorem \ref{Namikawa}, we have the following $\Cstar$-equivariant commutative diagram. 
\[\setlength{\abovedisplayskip}{-3pt}
\begin{tikzcd}[row sep=tiny, column sep=tiny]
&X(A, 0)/W_B\ar[dd]&\\
X(A, \alpha)\ar[ur, "\Pi"]\ar[rr, crossing over]\ar[dd]&&\calX\ar[dd]\\
&\C^d/W_B&\\
\C^d\ar[rr, "\psi"]\ar[ur, "\psi_{B}"]&&\C^d/W
\end{tikzcd}, \setlength{\belowdisplayskip}{-3pt}
\]
where $\psi_{{B}}$ is the quotient map by $W_B$. 
Then, if one takes the completion of the diagram above at 0, then by the universality of $\calX$ at 0 $\in\C^d/W$, the following diagram will be induced and $\widehat{X}(A, 0)\cong \widehat{\calX}\times_{\widehat{\C^d/W_B}}{\widehat{\C^d/W}}$:  
\[\setlength{\abovedisplayskip}{-3pt}
\begin{tikzcd}[row sep=tiny, column sep=tiny]
&\widehat{X(A, 0)/W_B}\ar[dd]\ar[dr, dashrightarrow]&\\
\widehat{X(A, \alpha)}\ar[ur, "\widehat{\Pi}"]\ar[rr, crossing over]\ar[dd]&&\widehat{\calX}\ar[dd]\\
&\widehat{\C^d/W_B}\ar[dr, dashrightarrow]&\\
\widehat{\C^d}\ar[rr, "\widehat{\psi}"]\ar[ur, "\widehat{\psi}_{B}"]&&\widehat{\C^d/W}
\end{tikzcd}. \setlength{\belowdisplayskip}{-3pt}
\]
Since $\widehat{\psi}_B, \ \widehat{\psi}$ are $\Cstar$-equivariant, $\widehat{\C^d/W_B} \to \widehat{\C^d/W}$ is also $\Cstar$-equivariant. Then, using this $\Cstar$-action, one can algebraize this diagram as the following diagram (cf.\ \cite[section 4]{NamFlops} and \cite[section 5.4]{NamPDaffine}), that is, the diagram above is the same as the completion of the following $\Cstar$-equivariant diagram at 0:  
\[
\begin{tikzcd}[row sep=tiny, column sep=tiny]
&X(A, 0)/W_B\ar[dd]\ar[dr]&\\
X(A, \alpha)\ar[ur, "\Pi"]\ar[rr, crossing over]\ar[dd]&&\calX\ar[dd]\\
&\C^d/W_B\ar[dr]&\\
\C^d\ar[rr, "{\psi}"]\ar[ur, "\psi_{B}"]&&\C^d/W
\end{tikzcd}.
\]
Now, we have $W\subseteq W_B\subseteq GL(\C^d)$ and $\C^d/W_B \to \C/W$ as above. Hence, $W=W_B$ and this completes the proof.  
\end{proof}

Below, we will construct a $W_B$-action satisfying the  properties in the Lemma \ref{Wactionlem}.  
As the previous setting, we can assume $B$ is in the following form: 
\begin{equation}\tag{*}\arraycolsep=4pt
B=\left(
\begin{array}{ccc}
&\text{\huge{$B^{(1)}$}}&\\
\hline
&\text{\huge{$B^{(2)}$}}&\\
\hline
&\vdots&\\
\hline
&\text{\huge{$B^{(s)}$}}&\\
\end{array}
\right), \ \ \ 
B^{(k)}=\begin{array}{rcccll}
	\ldelim({3}{4pt}[] &&\bm{b^{(k)}}&&\rdelim){3}{4pt}[]&\rdelim\}{3}{10pt}[$\ell_k$] \\
	&&\vdots&& \\
	&& \bm{b^{(k)}} && \\
	\end{array}\ \ \ \ , \arraycolsep=2pt
\end{equation}
where if $k_1 \neq k_2$, then $\bm{b^{(k_1)}}\neq\pm\bm{b^{(k_2)}}$. Under this assumption, we can describe and construct the $W_B$-action.
Note that $W_B:=\gerS_{\ell_1} \times \cdots \times \gerS_{\ell_s}\subseteq \gerS_n$ naturally acts on $\C^{2n}$ (resp. $\C^n$) as the coordinate permutation $z_i\mapsto z_{\sigma(i)}$, $w_i\mapsto w_{\sigma(i)}$ (resp. $u_i\mapsto u_{\sigma(i)}$), where $(z_1, \ldots, z_n, w_1, \ldots ,w_n)$ (resp. $(u_1, \ldots u_n)$) is the coordinate of $\C^{2n}$ (resp. $\C^n$).

Then we can construct the desired $W_B$-action as the following. 
\begin{proposition}\label{prop:HT-NWgrp}
In the setting above, for $W_B:=\gerS_{\ell_1}\times\cdots\times\gerS_{\ell_s}$, 
\begin{itemize}
\item[$\bullet$] The $W_B$-action on $\C^{2n}$ induces a $W_B$-action on $X(A, 0)=\C^{2n}/\hspace{-3pt}/\T_\C^d$.
\item[$\bullet$] The $W_B$-action on $\C^n$ induces a $W_B$-action on $\C^d$ ($\bm{a_i} \mapsto \bm{a_{\sigma(i)}}$) through the surjection $A: \C^n\to\C^d$.
\end{itemize}
These actions satisfy the properties in Lemma \ref{Wactionlem}. 
Moreover, we have the following (cf.\ Lemma \ref{ring str lem})
\[\C[X(A, 0)/W_B]=\C[z_1w_1, \ldots ,z_nw_n]^{W_B}[f_{\bm{\beta}} \ | \ \bm{\beta}\in \Image B].\] 
\end{proposition}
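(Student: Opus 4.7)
The plan is to define the $W_B$-action on $\C^{2n}$ (resp.\ $\C^n$) by simultaneous coordinate permutation $z_i\mapsto z_{\sigma^{-1}(i)}$, $w_i\mapsto w_{\sigma^{-1}(i)}$ (resp.\ $u_i\mapsto u_{\sigma^{-1}(i)}$) and to verify each required property through a single structural observation: for every $\sigma\in W_B$ the coordinate permutation of $\Z^n$ fixes $\Image B=\Ker A$ pointwise. This is immediate from the form (*), since an element $B\bm{x}\in\Image B$ has $i$-th component $\bm{b_i}\cdot\bm{x}$, and the rows $\bm{b_i}$ are constant on each $W_B$-block $F_k$.

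Using this, I would check that the action descends to $X(A,0)$: the $\T_\C^d$-invariants in $\C[\bm{z},\bm{w}]$ are spanned by monomials $\bm{z}^{\bm{u}}\bm{w}^{\bm{v}}$ with $\bm{u}-\bm{v}\in\Image B$, and this condition is preserved since $\sigma$ fixes $\Image B$ pointwise. The action also descends to $\C^d=\C^n/\Ker A$ as claimed ($\bm{a_i}\mapsto\bm{a_{\sigma(i)}}$), with linearity inherited from the linear action on $\C^n$. Properties (i)--(ii) of Lemma \ref{Wactionlem} are then straightforward: the Poisson structure is preserved because $\sigma^*\om_\C=\om_\C$ for $\om_\C=\sum_j dz_j\wedge dw_j$, commutativity with $\Cstar$ is clear since $\Cstar$ acts by overall scalars, and the equivariance of $\overline{\mu}_0(\bm{z},\bm{w})=\sum_j z_jw_j\bm{a_j}$ is a one-line check.

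The subtlest point is effectiveness of $W_B\curvearrowright\C^d$, which Lemma \ref{Wactionlem} relies on in order to conclude $W=W_B$. If $\sigma\in W_B$ acted trivially on $\C^d$, then $\bm{e_{\sigma(i)}}-\bm{e_i}\in\Ker A=\Image B$ for every $i$; if $\sigma(i)=j\neq i$, which forces $j$ to be in the same block as $i$ and hence $\bm{b_i}=\bm{b_j}$, then writing $\bm{e_j}-\bm{e_i}=B\bm{x}$ and comparing the $i$-th and $j$-th components gives the contradiction $-1=\bm{b_i}\cdot\bm{x}=\bm{b_j}\cdot\bm{x}=1$; hence $\sigma=\mathrm{id}$. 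I expect this sign-contradiction step, which is where the block structure of $B$ is genuinely exploited, to be the main obstacle.

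For property (iii), on $Y(A,0)$ the moment map equations $\sum_j z_j w_j a_{ij}=0$ state precisely that $(z_1 w_1,\ldots,z_n w_n)\in\Ker A=\Image B$, which by the observation above is $W_B$-fixed; hence $\sigma^*(z_jw_j)=z_jw_j$ on $Y(A,0)$. Since already on $X(A,0)$ one computes $\sigma^*(f_{\bm{\beta}})=f_{\sigma\cdot\bm{\beta}}=f_{\bm{\beta}}$ for $\bm{\beta}\in\Image B$ by the same observation, and since these elements generate $\C[Y(A,0)]$ by Lemma \ref{ring str lem}, $W_B$ acts trivially on $Y(A,0)$. Finally, because the $W_B$-action on $X(A,0)$ fixes each $f_{\bm{\beta}}$ and only permutes the $z_jw_j$ among themselves within blocks, taking $W_B$-invariants of the presentation in Lemma \ref{ring str lem} directly yields the formula $\C[X(A,0)/W_B]=\C[z_1w_1,\ldots,z_nw_n]^{W_B}[f_{\bm{\beta}}\mid\bm{\beta}\in\Image B]$.
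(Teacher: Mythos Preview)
Your proposal is correct and follows essentially the same approach as the paper. Your organizing observation that every $\sigma\in W_B$ fixes $\Image B=\Ker A$ pointwise neatly packages what the paper verifies piece by piece (invariance of the $f_{\bm{\beta}}$'s, descent to $\C^d$, and the identity $z_iw_i\equiv z_{\sigma(i)}w_{\sigma(i)}$ modulo $J_\mu$), and your effectiveness argument via the sign contradiction $-1=\bm{b_i}\cdot\bm{x}=\bm{b_j}\cdot\bm{x}=1$ is exactly the paper's.
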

\begin{remark}
By definition, the $W_B$-action on $\C^{2n}$ does not commute with the $\T_\C^d$-action on it in general. 
\end{remark}
\begin{proof}
First, we show that the $W_B$-action on $X(A, 0)$ and $\C^d$ is well-defined. Note that $\mu: \C^{2n}\to \C^d$ is decomposed as $\C^{2n}\xrightarrow{\Psi}\C^n\xrightarrow{A}\C^d$, 
where $\Psi(\bm{z}, \bm{w})=\sum_{i=1}^n{z_iw_i\bm{e_i}}$ ($\Psi$ is the moment map associated to the natural $\T_\C^n$-action on $\C^{2n}$). Since $\Psi$ and $\mu$ are $\T_\C^d$-invariant, the following diagram is induced. 
\begin{equation}\tag{**}
\begin{tikzcd}
     \C^{2n}    \ar[d, two heads]                                                                            &           \\
     X(A, 0) \ar[dr, two heads, "\overline{\mu}"] \ar[d, two heads, "\overline{\Psi}"]         &            \\
     \C^n \ar[r, two heads,  "A"]                                                                         & \C^d    \\
\end{tikzcd}\ \ \ \  \ \ \ \begin{tikzcd}
    \C[\bm{z}, \bm{w}]                                       &  \\
    \C[\bm{z}, \bm{w}]^{\T_\C^d} \ar[u, hook]         &   \\
    \C[u_1, \ldots ,u_n] \ar[u, hook, "{\overline{\Psi}}^*"]    & \C[t_1, \ldots ,t_d] \ar[l, hook',  "{}^t\!A"] \ar[ul, hook', "{\overline{\mu}}^*"]   \\
\end{tikzcd}, \setlength{\belowdisplayskip}{-8pt} 
\end{equation}
where the right commutative diagram is the diagram of the corresponding coordinate rings to the left diagram.  
It is clear that the $W_B$-action on $\C^n$ as the permutation of the coordinates is the same as the induced action from $W_B$-action on $\C^{2n}$ by the above inclusion $\C[u_1, \ldots ,u_n] \hookrightarrow \C[\bm{z}, \bm{w}]$. Then to show that the desired $W_B$-action on $X(A, 0)$ and $\C^d$ is well-defined, we only have to show that the $W_B$-action on $\C[\bm{z}, \bm{w}]$ preserves all subalgebras appeared in the diagram above. 
For $\C[X(A, 0)]=\C[\bm{z}, \bm{w}]^{\T_\C^d}$, by Lemma \ref{ring str lem}, we have the following description: 
\[\C[\bm{z}, \bm{w}]^{\T_\C^d}=\C[z_1w_1, \ldots ,z_nw_n][f_{\bm{\beta}} \ | \ \bm{\beta}\in \Image B].\]
Since $B$ is in the form of (*), each element $f_{\bm{\beta}} \ (\beta\in\Image B)$ is in the following form: 
\begin{align*}
f_{\bm{\beta}}&={\prod_{k: \beta^{(k)}>0}{\left(\prod_{m\in F_k}{z_{m}}\right)^{\beta^{(k)}}}\prod_{k: \beta^{(k)}<0}{\left(\prod_{m\in F_k}{w_m}\right)^{-\beta^{(k)}}}}
\end{align*} 
where $\beta^{(k)}:=\beta_m \ (m\in F_k)$ (for the definition of $F_k$, see Corollary \ref{cor:codim2strata}) 
Then, each $f_{\bm{\beta}}$ is invariant under the $W_B$-action ($z_iw_i\mapsto z_{\sigma(i)}w_{\sigma(i)} \ (\sigma\in W_B)$). It implies that the $W_B$-action preserves $\C[\bm{z}, \bm{w}]^{\T_\C^d}$. In particular, 
\begin{equation}\tag{***}
\C[X(A, 0)/W_B]=\C[z_1w_1, \ldots ,z_nw_n]^{W_B}[f_{\bm{\beta}} \ | \ \bm{\beta}\in \Image B].\end{equation}
It is easily checked that $W_B$ also preserves $\C[t_1, \ldots ,t_d]$ (or directly, as similar discussion above, one can prove this by using {the $W_B$-invariance of $\Image B=\Ker A$}). This completes the proof of the well-definedness of the $W_B$-action on $X(A, 0)$ and $\C^d$. 

Next, we will show that this $W_B$-action has the properties (i), (ii), and (iii) stated in Lemma \ref{Wactionlem}.

\noindent(i) \ The linearity of the $W_B$-action on $\C^d$ is obvious. {Also, $W_B:=\prod_{k=1}^{s}{\gerS_{\ell_k}}$-action is effective. In fact, if not so, there exists distinct $i, j\in F_k$ for some $1\leq k\leq s$ such that $\bm{a_i}=\bm{a_j}$. In particular, this implies that $\bm{e_i}-\bm{e_j}\in \Ker A=\Image B$. However, this cannot be happened since $\bm{b_i}=\bm{b_j}=\bm{b^{(k)}}\neq0$.} Finally, since the $W_B$-action preserves the symplectic structure of $\C^{2n}$ and the Poisson structure of $X(A, 0)$ is induced from $\C^{2n}$, the $W_B$-action on $X(A, 0)$ preserves the Poisson structure as well. 

\noindent(ii) \ The commutativity of the $W_B$-action with the $\Cstar$-action is obvious. The $W_B$-equivariance of $\overline{\mu}$ is also clear by the above proof of the well-definedness. 

\noindent(iii) \ By definition, $\C[Y(A, 0)]=\C[X(A, 0)]/J_\mu$, where $J_\mu:=\left\langle \displaystyle\sum_{j=1}^n{z_jw_ja_{ij}} \ \left|\right. \ i=1, \ldots ,d\right\rangle$. Then, 
\[J_\mu=\left\langle\sum_{j=1}^n{z_jw_j\alpha_j} \ \left|\right. \bm{\alpha}=(\alpha_1, \ldots ,\alpha_n) \in \Image({}^t\!A) \right\rangle=\left\langle\sum_{j=1}^n{z_jw_j\alpha_j} \ \left|\right. \bm{\alpha}\in \Ker(B^T) \right\rangle.\]
Thus by a similar discussion as above, $J_\mu$ is preserved by the $W_B$-action. Then, the $W_B$-action on $Y(A, 0)$ is induced. 
To show that this action is trivial, by (***), we only have to show that for each $i$, $z_iw_i-z_{\sigma(i)}w_{\sigma(i)}\in J_\mu$. 
This is followed from that $\bm{e_i}-\bm{e_{\sigma(i)}}\in\Ker(B^T)$ by $\bm{b_i}=\bm{b_{\sigma(i)}}$. 
This completes the proof.
\end{proof}

We can summarize the main result of this section as the following.  
\begin{theorem}\label{thm:Mainthm}
Let $A$ be a unimodular matrix  and $\alpha\in\Z^d$ be a generic element. If for $B$, $\bm{b_i}\neq0$ ($1\leq i\leq n$) and we take $B$ as (*), then the diagram of Theorem \ref{Namikawa} for the affine hypertoric variety $Y(A, 0)$ is obtained as the following:  
\[
\begin{tikzcd}[row sep=tiny, column sep=tiny, contains/.style = {draw=none,"\in" description,sloped}, icontains/.style = {draw=none,"\ni" description,sloped}]
&Y(A, \alpha)\ar[dl, hook']\ar[rr, "\pi"]\ar[dd]&&Y(A, 0)\ar[dl, hook'] \ar[dd]\\
X(A, \alpha)\ar[rr, crossing over, "\Pi_{W_B}" near end]\ar[dd, "\overline{\mu}_\alpha"]&&X(A, 0)/W_B&\\
&0\ar[rr, mapsto]&&\overline{0}\\
\C^d\ar[rr, "\psi"]\ar[ur, icontains]&&\C^d/W_B\ar[ur, icontains] \ar[from=uu, crossing over, "\overline{\mu}_{W_B}" near start]&
\end{tikzcd},
\]
where $\Pi_{W_B}$ is the composition of $\Pi: X(A, \alpha) \to X(A, 0)$ and the quotient map of $X(A, 0)$ by $W_B=\gerS_{\ell_1}\times\cdots\times\gerS_{\ell_s}$. 
\end{theorem}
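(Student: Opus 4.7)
The plan is that essentially all the technical content of Theorem \ref{thm:Mainthm} has already been packaged into the three preceding results, so I would proceed by assembling them in sequence.

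First, I would invoke Proposition \ref{prop:UPDforsmooth}: the hypothesis $\bm{b_i}\neq 0$ for every $i$ means $B$ has no zero row vectors, so $\overline{\mu}_\alpha : X(A,\alpha)\to\C^d$ is the universal Poisson deformation of the smooth hypertoric variety $Y(A,\alpha)$. This identifies the left column of the proposed diagram with Namikawa's left column in Theorem \ref{Namikawa}, so the base of that column is $H^2(Y(A,\alpha),\C)\cong\C^d$ (via the Kirwan map).

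Second, with $B$ in the normal form (*), I would apply Proposition \ref{prop:HT-NWgrp} to obtain a $W_B$-action on $X(A,0)$ and on $\C^d$ satisfying properties (i)--(iii) of Lemma \ref{Wactionlem}: linearity and effectiveness on $\C^d$, preservation of the Poisson and $\Cstar$ structures on $X(A,0)$, $W_B$-equivariance of $\overline{\mu}_0$, and triviality of the induced action on the closed Poisson subvariety $Y(A,0)\subseteq X(A,0)$. Feeding this into Lemma \ref{Wactionlem} then yields two simultaneous conclusions: $\overline{\mu}_{W_B}:X(A,0)/W_B\to\C^d/W_B$ is the universal Poisson deformation of $Y(A,0)$, and the Namikawa--Weyl group $W$ coincides with $W_B$, upgrading the a priori inclusion $W\subseteq W_B$ from Corollary \ref{cor:codim2strata} to an equality. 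This identification $W=W_B$ is the only nontrivial point in the whole argument; it is handled inside Lemma \ref{Wactionlem} by completing at $0$, using the universality of $\calX\to\C^d/W$, and then $\Cstar$-equivariantly algebraizing the resulting intermediate factorization $\widehat{\C^d/W_B}\to\widehat{\C^d/W}$, which forces $W_B$ to sit inside $W$ as well.

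Third, I would verify the commutativity of the whole cube. The morphism $\Pi_{W_B}$ is by definition the composition of $\Pi: X(A,\alpha)\to X(A,0)$ from Remark \ref{rem:comm diagram} with the quotient $X(A,0)\to X(A,0)/W_B$, so the top face commutes by construction. Commutativity of the base involves only $\psi:\C^d\to\C^d/W_B$, which is the quotient map. Commutativity of the vertical faces follows from the relation $\overline{\mu}_0\circ\Pi=\overline{\mu}_\alpha$ recorded in Remark \ref{rem:comm diagram}, together with the $W_B$-equivariance of $\overline{\mu}_0$ from property (ii) in Proposition \ref{prop:HT-NWgrp}. The required $\Cstar$-equivariance is already built into each ingredient. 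Since the hard work lies in Proposition \ref{prop:UPDforsmooth} (an isomorphism $\kappa_2$ of Konno) and in the equality $W=W_B$ inside Lemma \ref{Wactionlem}, the remainder of the proof amounts to unpacking definitions.
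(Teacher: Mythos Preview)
Your proposal is correct and matches the paper's approach exactly: the paper introduces Theorem \ref{thm:Mainthm} with the sentence ``We can summarize the main result of this section as the following,'' treating it as the assembly of Proposition \ref{prop:UPDforsmooth}, Proposition \ref{prop:HT-NWgrp}, and Lemma \ref{Wactionlem} that you describe. Your explicit verification of the commutativity and $\Cstar$-equivariance of the cube is more detailed than what the paper writes, but this is routine bookkeeping already implicit in Remark \ref{rem:comm diagram} and property (ii) of Lemma \ref{Wactionlem}.
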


\section{Classification of affine hypertoric varieties}
In this section, we prove for unimodular matrices $A$ and $A'$, $Y(A, 0)\cong Y(A', 0)$ as conical symplectic varieties if and only if $A\sim A'$ i.e., these are transformed {to} each other by some operations (cf.\ Definition \ref{def:fund}). As a corollary, we obtain a criterion when two quiver varieties whose dimension vector has all coordinates equal to one are isomorphic to each other. Then we describe all 4- and 6-dimensional affine hypertoric varieties (in these cases, all of them are obtained as quiver varieties). Also, as a related result, we will prove if $Y(A, 0)$ is isomorphic to a nilpotent orbit closure of some semisimple Lie algebra, then $Y(A, 0)$ is the direct product $\overline{\calO}=\overline{{\calO}^{\tmin}_{A_{\ell_1}}}\times\cdots\times\overline{{\calO}^{\tmin}_{A_{\ell_s}}}$ of some minimal nilpotent orbit closures. 

First, we refer the following result by Arbo and Proudfoot (\cite{AP}) such that $\T_\C^{n-d}$-equivariant isomorphism classes of $Y(A, 0)$ are classified by the operations in Definition \ref{def:fund} (actually, they use the terms of zonotope and they prove more general theorem). 

\begin{theorem}{\rm (The unimodular and affine case of \cite[Corollary 5.4]{AP})}\\
For rank $d$ unimodular matrices $A$ and $A'$ in $\Mat_{d\times n}(\Z)$, the following are equivalent: 
\begin{itemize}
\item[(i)]$Y(A, 0)$ is $\Cstar\times\T_\C^{n-d}$-equivariant isomorphic to $Y(A', 0)$. 
\item[(ii)]$A\sim A'$.
\end{itemize}
{Moreover, in this case, $Y(A, 0)$ is $\Cstar\times\T_\C^{n-d}$-equivariant isomorphic to $Y(A', 0)$ as symplectic varieties.}
\end{theorem}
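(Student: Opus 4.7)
The direction (ii)$\Rightarrow$(i), together with the ``moreover'' clause, is a direct specialisation of Lemma~\ref{lem:fund}: if $A'=PAD$ with $P\in GL_d(\Z)$ and $D$ a signed permutation matrix, then $\alpha'=P\cdot 0=0$, so Lemma~\ref{lem:fund} supplies a $\Cstar\times\T_\C^{n-d}$-equivariant \emph{symplectic} isomorphism $Y(A,0)\xrightarrow{\sim}Y(A',0)$. The symplectic strengthening thus comes for free once the harder implication is established.

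For (i)$\Rightarrow$(ii), fix a $\Cstar\times\T_\C^{n-d}$-equivariant isomorphism $\phi:Y(A,0)\xrightarrow{\sim}Y(A',0)$. My plan is to recover $B$ up to $\sim$ from the bigraded coordinate ring. By Lemma~\ref{ring str lem}, $\C[Y(A,0)]$ is $\Z^{n-d}$-graded by $\T_\C^{n-d}$-weights (canonically the sublattice $\Image B\subset\Z^n$), and the weight-$\bm\beta$ component equals $f_{\bm\beta}\cdot R_0$, where $R_0=\C[z_1w_1,\dots,z_nw_n]/J$ and $J$ is the ideal generated by the moment-map relations $\sum_j z_jw_ja_{ij}$; this decomposition comes from the factorisation $\bm z^{\bm u}\bm w^{\bm v}=f_{\bm\beta}\cdot\prod_i(z_iw_i)^{\min(u_i,v_i)}$ whenever $\bm u-\bm v=\bm\beta$. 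Since $R_0$ is non-negatively $\Cstar$-graded with $R_{0,0}=\C$, the minimal $\Cstar$-degree of a nonzero weight-$\bm\beta$ element of $\C[Y(A,0)]$ is exactly $\deg f_{\bm\beta}=|\bm\beta|_1=\sum_i|\beta_i|$. The equivariance of $\phi^*$ therefore produces a lattice isomorphism $\psi^*:\Image B'\xrightarrow{\sim}\Image B$ that is an isometry for the ambient $\ell^1$-norms; equivalently, the polyhedral norms
\[
\|\bm c\|_B=\sum_{i=1}^n|\langle\bm b_i,\bm c\rangle|,\qquad \|\bm c\|_{B'}=\sum_{i=1}^n|\langle\bm b'_i,\bm c\rangle|
\]
on $\Z^{n-d}$ are related by $\|\bm c\|_{B'}=\|\psi^*\bm c\|_B$.

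The main obstacle is to recover the multiset $\{\pm\bm b_i\}$ from $\|\cdot\|_B$. The dual unit ball of $\|\cdot\|_B$ is the zonotope $Z_B=\sum_{i=1}^n[-\bm b_i,\bm b_i]\subset\R^{n-d}$, and by the classical rigidity theorem for zonotopes its multiset of generating segments is determined up to reordering and reflection, provided the $\bm b_i$ are primitive. Primitivity of each nonzero $\bm b_i$ follows from the unimodularity of $B$: if $\bm b_i=k\bm v$ with $k\ge 2$ and $\bm v\in\Z^{n-d}$, one completes $\bm b_i$ to a $\Q$-basis by $n-d-1$ other rows of $B$ (possible since $\rank B=n-d$ and $\bm b_i\ne 0$); the resulting $(n-d)\times(n-d)$ minor of $B$ equals $\pm k$ times a nonzero integer, contradicting unimodularity. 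Consequently $\psi^*$ identifies $\{\pm\bm b'_i\}$ with $\{\pm\bm b_i\}$ as multisets in $\Z^{n-d}$, which means $B'=DBQ$ for some signed permutation matrix $D$ and some $Q\in GL_{n-d}(\Z)$, i.e.\ $A\sim A'$.
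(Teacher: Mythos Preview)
The paper does not supply its own proof of this statement; it is quoted verbatim as the unimodular affine case of \cite[Corollary~5.4]{AP}. So there is no in-paper argument to compare against, and your task reduces to whether your proof stands on its own.

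Your direction (ii)$\Rightarrow$(i) and the ``moreover'' clause are correct and are exactly Lemma~\ref{lem:fund} specialised to $\alpha=0$.

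For (i)$\Rightarrow$(ii), your argument is correct and is very much in the spirit of \cite{AP}, which also proceeds via zonotopes. The key steps check out: by Lemma~\ref{ring str lem} the $\T_\C^{n-d}$-weight-$\bm\beta$ component of $\C[Y(A,0)]$ is $f_{\bm\beta}\cdot R_0$, and $f_{\bm\beta}\neq 0$ because $f_{\bm\beta}f_{-\bm\beta}=\prod_{j:\beta_j\neq 0}(z_jw_j)^{|\beta_j|}$ is nonzero in the integral domain $R_0$ (each such $z_jw_j$ is nonzero there precisely because $\bm b_j\neq 0$ whenever $\beta_j\neq 0$). Hence the minimal $\Cstar$-degree in weight $\bm\beta$ is $|\bm\beta|_1$, and the induced lattice isomorphism is an isometry for the norms $\|\cdot\|_B$, $\|\cdot\|_{B'}$. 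Your identification of $\|\cdot\|_B$ with the support function of $Z_B=\sum_i[-\bm b_i,\bm b_i]$ is correct, so the dual isometry carries $Z_B$ to $Z_{B'}$. Primitivity of each nonzero $\bm b_i$ follows from unimodularity exactly as you argue, and then the uniqueness of the Minkowski decomposition of a zonotope into segments recovers the multiset $\{\pm\bm b_i\}$. The conclusion $B'=DBQ$ follows, giving $A\sim A'$.

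Two small points you should make explicit. First, when some $\bm b_i=0$ the fibre $\mu^{-1}(0)$ can be reducible, so you cannot argue $f_{\bm\beta}\neq 0$ there; but your argument only needs $\C[Y(A,0)]$ to be a domain, which it is (via Corollary~\ref{cor:Breduction} reducing to the case of no zero rows, where Proposition~\ref{prop:birational} applies). Second, the zonotope does not see zero rows at all, but since $A$ and $A'$ have the same size $d\times n$ the counts of zero and nonzero rows are forced to match once the nonzero generators agree. With these remarks added, the proof is complete.
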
  
As we will prove below, by using the description of the universal Poisson deformation space of $Y(A, 0)$ (cf.\ Theorem \ref{thm:Mainthm}), we can drop the condition on  $\T_\C^{n-d}$-equivariance in the theorem above.  Moreover, as noted in section 2, $A\sim A'$ if and only if the associated vector matroids $M(A)$ and $M(A')$ are isomorphic to each other. Then we can prove the following: 
\begin{theorem}\label{thm:classification}
For rank $d$ unimodular matrices $A$ and $A'$ in $\Mat_{d\times n}(\Z)$, the following are equivalent: 
\begin{itemize}
\item[(i)]$Y(A, 0)$ is isomorphic to $Y(A', 0)$ as conical symplectic varieties (cf.\ Definition \ref{def:conical}). 
\item[(ii)]$A\sim A'$, i.e., $A'$ is obtained from $A$ by a sequence of some elementary row operations over $\Z$, interchanging some column vectors $\bm{a_i}$, and multiplying some $\bm{a_i}$ by $-1$. 
\item[(iii)]The associated vector matroid $M(A)$ is isomorphic to $M(A')$.
\item[(iv)]The associated vector matroid $M(B^T)$ is isomorphic to $M({B'}^T)$, where $B$ (resp. $B'$) is a matrix which makes the following be exact: \[\setlength{\abovedisplayskip}{-3pt} \begin{tikzcd}
0\ar[r]&\Z^{n-d}\ar[r, "{B^{(')}}"] &\Z^{n}\ar[r, "{A^{(')}}"]&\Z^d\ar[r]&0         
\end{tikzcd}.\setlength{\belowdisplayskip}{-3pt} \]
\end{itemize}
\end{theorem}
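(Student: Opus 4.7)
The equivalence (ii) $\Leftrightarrow$ (iii) is exactly the theorem recalled just before the statement, and (iii) $\Leftrightarrow$ (iv) follows from Remark \ref{rem:dual matroid}, since $M(B^T)$ is the dual matroid of $M(A)$ and matroid duality preserves isomorphism classes. For (ii) $\Rightarrow$ (i) I would simply invoke Lemma \ref{lem:fund}, which already produces a $\Cstar$-equivariant symplectic (hence conical symplectic) isomorphism. The whole content of the theorem is therefore the implication (i) $\Rightarrow$ (ii), and my plan is to upgrade the given isomorphism of conical symplectic varieties first to an isomorphism of their universal Poisson deformations, then to a torus-equivariant isomorphism of the Lawrence toric varieties, and finally to a $\T_\C^{n-d}$-equivariant isomorphism of hypertoric varieties, to which the Arbo--Proudfoot theorem applies and yields $A \sim A'$.

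The first main step is the passage to universal Poisson deformations. Since the Namikawa--Weyl group and the universal Poisson deformation are canonical invariants of a conical symplectic variety, any identification $Y(A, 0) \cong Y(A', 0)$ induces an abstract isomorphism $W_B \cong W_{B'}$ (both groups are computed from the codimension-$2$ stratification in Corollary \ref{cor:codim2strata}) together with a $\Cstar$-equivariant Poisson isomorphism
\[
X(A, 0)/W_B \xrightarrow{\sim} X(A', 0)/W_{B'}
\]
compatible with $\overline{\mu}_{W_B}$ and $\overline{\mu}_{W_{B'}}$. For this I would apply Theorem \ref{thm:Mainthm} on each side, together with the algebraization of the formal universal deformation via the positive $\Cstar$-action. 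Next, I would lift this isomorphism of quotients to a $\Cstar$-equivariant Poisson isomorphism $X(A, 0) \cong X(A', 0)$ over the induced isomorphism $\C^d/W_B \cong \C^d/W_{B'}$, exploiting that $X(A, 0) \to X(A, 0)/W_B$ and its counterpart are the respective Galois covers attached to the Namikawa--Weyl group, so the lift is determined uniquely up to the identified Galois action.

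Once a $\Cstar$-equivariant Poisson isomorphism between the affine toric varieties $X(A, 0)$ and $X(A', 0)$ is in hand, I would appeal to Berchtold's theorem \cite{Ber} to replace it, after composing with a suitable automorphism of $X(A', 0)$, by one equivariant with respect to the maximal torus $\T_\C^{2n-d} = \T_\C^{2n}/\T_\C^d$. Restricting to the central fibre of the moment map would then deliver a $\Cstar \times \T_\C^{n-d}$-equivariant Poisson isomorphism $Y(A, 0) \cong Y(A', 0)$, and the Arbo--Proudfoot theorem recalled above closes the argument.

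The hard part will be this last correction step: the modification supplied by Berchtold must be chosen simultaneously $\Cstar$-equivariant, Poisson preserving, and compatible with the moment maps. My plan to control this is to exploit the explicit presentation of the coordinate ring in Lemma \ref{ring str lem} and the bracket formulas of Remark \ref{rem:deg=1}(3): the positive conical $\Cstar$-grading severely restricts the torus elements available as corrections, and the relation $\{f_{\bm{\beta}}, z_k w_k\} = \beta_k f_{\bm{\beta}}$ pins down the effect of any Poisson automorphism on the degree-two generators $z_i w_i$, hence on the moment map. Compatibility with $\overline{\mu}_0$ should then follow from Poisson compatibility, since $\overline{\mu}_0$ is generated, via the embedding $A^T$, by precisely the functions $z_i w_i$.
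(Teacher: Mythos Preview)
Your overall strategy matches the paper's through the application of Berchtold's theorem: both arguments pass from the conical symplectic isomorphism to a $\Cstar$-equivariant Poisson isomorphism $\Phi: X(A,0) \xrightarrow{\sim} X(A',0)$ over $\C^d$ (the paper does this by the same fiber product along $\C^d \to \C^d/W_B$ that your Galois-cover language describes), and then invokes \cite{Ber} to obtain a $\T_\C^{2n-d}$-equivariant isomorphism $\Psi_\sigma$ induced by a coordinate permutation $\sigma \in \gerS_{2n}$ on $\C^{2n}$.

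The divergence is exactly at the point you flag as hard. Rather than attempting to make $\Psi_\sigma$ simultaneously Poisson, $\Cstar$-equivariant, and moment-map compatible, the paper extracts the conclusion $A \sim A'$ directly from the combinatorics of $\sigma$. The equivariance of the lifted map $\widetilde{\Psi}_\sigma:\C^{2n}\to\C^{2n}$ with respect to some isomorphism $\psi: \T_\C^d \to \T_\C^d$ forces, at the level of character lattices, $\psi^*\bm{a'_i} = \bm{a_{\sigma(i)}}$ or $-\bm{a_{\sigma(i)-n}}$ (depending on whether $\sigma(i)\le n$). After a harmless normalization ensuring that no $\bm{a'_i} = -\bm{a'_j}$ for $\bm{a'_i}\neq 0$, one builds from $\sigma$ an honest signed permutation matrix $D\in GL_n(\Z)$ with $A = \psi^* A' D^{-1}$. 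The desired $\Cstar \times \T_\C^{n-d}$-equivariant symplectic isomorphism then comes for free from Lemma \ref{lem:fund}, rather than being something one has to coerce the Berchtold map into providing.

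Your proposed route is not wrong in principle, but the step you identify as hard really is hard as stated: Berchtold's theorem gives no control whatsoever over the Poisson structure or the moment map under $\Psi_\sigma$, and your plan to recover these via the bracket relations of Remark \ref{rem:deg=1}(3) would in effect reconstruct the signed-permutation argument anyway, since those relations encode precisely how the degree-two generators $z_iw_i$ pair with the $f_{\bm\beta}$ under the torus. The paper's shortcut is both shorter and avoids ever having to argue that a merely toric isomorphism respects symplectic data.
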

\begin{proof}
The equivalence of (iii) and (iv) is clear since $M(B^T)$ is the dual matroid of $M(A)$ (cf.\ Remark \ref{rem:dual matroid}).
Then, by the theorem above, we only have to show that if $\phi :Y(A, 0)\xrightarrow{\sim}Y(A', 0)$ is an isomorphism as conical symplectic varieties, then { we can construct a (possibly another) $\Cstar\times\T_\C^{n-d}$-equivariant isomorphism $Y(A, 0)\xrightarrow{\sim}Y(A', 0)$ as symplectic varieties.}
Since $\phi :Y(A, 0)\xrightarrow{\sim}Y(A', 0)$ is an isomorphism as conical symplectic varieties, the corresponding universal Poisson deformation spaces are $\Cstar$-equivariant isomorphic to each other as Poisson varieties, moreover we have the following commutative diagram: 
\[\setlength{\abovedisplayskip}{-1pt} \begin{tikzcd}
X(A, 0)/W_B\ar[r, "\Phi", "\sim"']\ar[d]&X(A', 0)/W_{B'}\ar[d]\\
\C^d/W_B\ar[r, "\sim"']&\C^d/W_{B'}
\end{tikzcd}.\]
Additionally, by taking the fiber product of the above diagram with the natural projection $p: \C^d\to \C^d/W_B\cong\C^d/W_{B'}$, we have the following commutative diagram: 
\[\begin{tikzcd}[row sep=tiny, column sep=tiny]
X(A, 0)\ar[rr, "{\Phi}", "\sim"']\ar[dr]&&X(A', 0)\ar[dl]\\
&\C^d&
\end{tikzcd},\]
{where we also denote by $\Phi$ the induced isomorphism.} 
Note that {${\Phi}$} is a $\Cstar$-equivariant isomorphism as Poisson varieties between two (Poisson) toric varieties. {Then, we only have to show the below claim, where $\T_\C^{2n-d}:=\T_\C^{2n}/\T_\C^d$. In fact, if we can prove this, then the restriction of $\Psi$ to $Y(A, 0)$ will induce an $\Cstar\times\T_\C^{n-d}$-equivariant isomorphism $Y(A, 0) \to Y(A', 0)$ as symplectic varieties (cf.\ Lemma \ref{lem:fund}). }

{\underline{Claim}: One can replace $\Phi$ by a $\Cstar\times\T_\C^{2n-d}$-equivariant isomorphism ${\Psi}$ as Poisson varieties, which is in the form of an isomorphism as given in Lemma \ref{lem:fund}.}

{\underline{Proof}: First, by \cite[Theorem 4.1]{Ber}, we can replace ${\Phi}$ by a $\T_\C^{2n-d}$-equivariant isomorphism $\Psi_{\sigma} : X(A, 0) \xrightarrow{\sim} X(A', 0)$. More precisely, $\Psi_{\sigma}$ is induced from a permutation of coordinates $\widetilde{\Psi}_{\sigma} :\C^{2n} \xrightarrow{\sim} \C^{2n}$ ($\sigma\in\gerS_{2n}$) such that $\widetilde{\Psi}_{\sigma}$ is equivariant with respect to an isomorphism $\psi : \T_\C^d \xrightarrow{\sim} \T_\C^d$. In particular, by definition, $\widetilde{\Psi}_{\sigma}|_{\T_\C^{2n}}$ induces an isomorphism $\T_\C^{2n} \xrightarrow{\sim} \T_\C^{2n}$. Then, by the equivariance of $\widetilde{\Psi}_{\sigma}$ with respect to $\psi$, the following diagrams of tori and the corresponding character group commute:}
\[\begin{tikzcd}[ampersand replacement=\&]
\T_\C^d\ar[r, hook, "{\begin{pmatrix}A^T\\ \hline -A^T\end{pmatrix}}"]\ar[d, "\psi"', "\wr"]\&\T_\C^{2n}\ar[d, "\widetilde{\Psi}_{\sigma}"', "\wr"]\\
\T_\C^d\ar[r, hook, "{\begin{pmatrix}{A'}^T\\ \hline -{A'}^T\end{pmatrix}}"']\&\T_\C^{2n}
\end{tikzcd}, \ \ \ 
\begin{tikzcd}[ampersand replacement=\&]
\Z^d\&\Z^{2n}\ar[l, two heads, "{(A | -A)}"']\\
\Z^d\ar[u, "\psi^*", "\wr"']\&\Z^{2n}\ar[l, two heads, "{(A' | -A')}"]\ar[u, "\widetilde{\Psi}_{\sigma}^*", "\wr"']
\end{tikzcd}. 
\]
{Below, we want to construct a signed permutation matrix $D : \Z^n \to \Z^n$ such that $\widetilde{\Psi}^*:=(D, -D) : \Z^n\oplus\Z^n \to \Z^n\oplus \Z^n$ satisfying the above commutative diagram instead of $\widetilde{\Psi}^*_{\sigma}$. If do so, then we have $A=\psi^* A' D^{-1}$, and this induces a desired isomorphism by Lemma \ref{lem:fund}.} 

{To do this, by the commmutativity of the above diagram, note that we have}
\[\psi^*\bm{a'_i}=\begin{cases}
\bm{a_{\sigma(i)}}& (\text{if} \ 1\leq \sigma(i) \leq n)\\
-\bm{a_{\sigma(i)-n}}& (\text{if} \ n+1\leq \sigma(i) \leq 2n)
\end{cases}.\]
{Now, by multiplying some $\bm{a'_i}$'s by $-1$, we can assume that if $\bm{a'_i}\neq0$ then there is no $j$ such that $\bm{a'_i}=-\bm{a'_j}$ (cf.\ Lemma \ref{lem:fund}). In this setting, we will construct a permutation $\tau\in\gerS_n$ such that $\psi^*\bm{a'_i}=\pm\bm{a_i}$ for any $1\leq i\leq n$. First, take any bijection $\tau_0$ between $I'_0:=\{i\in [n] \ | \ \bm{a'_i}=\bm{0}\}$ and $I_0:=\{i\in [n] \ | \ \bm{a_i}=\bm{0}\}$, then define $\tau(i):=\tau_{0}(i)$. Next, for $i\notin I'_0$, we define $\tau(i)$ as }
\[\tau(i):=\begin{cases}
\sigma(i) & (\text{if} \ 1\leq \sigma(i) \leq n)\\
\sigma(i)-n &  (\text{if} \ n+1\leq \sigma(i) \leq 2n)
\end{cases}.\]
{Then, this defines a well-defined permutation $\tau\in\gerS_n$ since we assume that if $\bm{a'_i}\neq0$ then there is no $j$ such that $\bm{a'_i}=-\bm{a'_j}$. By definition, we have} 
\[\psi^*\bm{a'_i}=\begin{cases}
\bm{a_{\tau(i)}}& (\text{if} \ 1\leq \sigma(i) \leq n)\\
-\bm{a_{\tau(i)}}& (\text{if} \ n+1\leq \sigma(i) \leq 2n)
\end{cases}.\]
{Thus, $D:=D_\tau D_{\pm}$ is the desired signed permutation matrix, where $D_\tau$ is the permutation matrix associated to $\tau$, and $D_{\pm}:=\diag[\vep_1, \ldots, \vep_n]$ is the diagonal matrix with $\vep_i=1$ (if $1\leq \sigma(i) \leq n$) and $\vep_i=-1$ (if $n+1\leq \sigma(i) \leq 2n$). }



\end{proof}

{\begin{remark}
By the same argument, we can show that if {$\phi : Y(A, \alpha) \xrightarrow{\sim} Y({A'}, \alpha')$} is a $\Cstar$-equivariant isomorphism as symplectic varieties between two smooth hypertoric varieties, then $\phi$ {can be replaced by a $\Cstar\times\T_\C^{n-d}$-equivariant isomorphism as symplectic varieties. By \cite{AP}, two smooth hypertoric varieties $Y(A, \alpha)$ and $Y(A', \alpha')$ are $\Cstar\times\T_\C^{n-d}$-equivariant isomorphic if and only if the associated hyperplane arrangements $\calH_B$ and $\calH_{B'}$ define the same {\it zonotope tiling}.}

\end{remark}}

By applying the Whitney's 2-isomorphism theorem (Theorem \ref{thm:Whitney}), we get the criterion to determine whether two affine toric quiver varieties (cf.\ Example \ref{eg:quiver}) are isomorphic to each other or not in terms of the associated graphs.    
\begin{corollary}\label{cor:quiver}
Let $G_1$ and $G_2$ be two finite graphs (without isolated vertices). Then, the following are equivalent: 
\begin{itemize}
\item[(1)]$Y(A_{G_1}, 0)\cong Y(A_{G_2}, 0)$ as conical symplectic varieties (cf.\ Definition \ref{def:conical}).
\item[(2)]$G_2$ can be transformed into $G_1$ by a sequence of the following two operations (we describe these operations by examples below, for the more precise definition, see \cite[section 5.3]{Ox})
\begin{itemize}
\item[(i)]The vertex identification and the vertex cleaving.
\item[(ii)]The Whitney twist
\end{itemize}
\begin{minipage}{0.3\hsize}
\[\begin{tikzcd}[row sep=tiny, column sep=tiny]
\bullet\ar[dd, no head]\ar[dr, no head]&&\bullet\ar[dd, no head]\ar[dl, no head]\\
&\bullet&\\
\bullet\ar[ur, no head]&&\bullet\ar[ul, no head]
\end{tikzcd}\overset{(i)}{\sim}
\begin{tikzcd}[row sep=tiny, column sep=tiny]
\bullet\ar[dd, no head]\ar[dr, no head]&&&\bullet\ar[dd, no head]\ar[dl, no head]\\
&\bullet&\bullet&\\
\bullet\ar[ur, no head]&&&\bullet\ar[ul, no head]
\end{tikzcd}\]
\end{minipage}
\begin{minipage}{0.1\hsize}
\ \ \ \ \ \ \ {,} 
\end{minipage}
\begin{minipage}{0.3\hsize}
\[\begin{tikzcd}[row sep=tiny, column sep=tiny]
\bullet\ar[d, no head]\ar[r, no head]&\bullet\ar[dl, no head]\ar[dr, no head]\ar[r, no head]\ar[d, no head]&\bullet\ar[d, no head]\\
\bullet\ar[r, no head]&\bullet\ar[r, no head]&\bullet\\
\end{tikzcd}\overset{(ii)}{\sim}
\begin{tikzcd}[row sep=tiny, column sep=tiny]
\bullet\ar[d, no head]\ar[r, no head]&\bullet\ar[dl, no head]\ar[r, no head]\ar[d, no head]&\bullet\ar[d, no head]\ar[dl, no head]\\
\bullet\ar[r, no head]&\bullet\ar[r, no head]&\bullet\\
\end{tikzcd}.
\]
\end{minipage}
\end{itemize}  
\end{corollary}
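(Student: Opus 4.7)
The plan is to derive this corollary as a direct composition of Theorem~\ref{thm:classification} and Whitney's 2-isomorphism theorem (Theorem~\ref{thm:Whitney}). The key bridge is that the signed incidence matrix $A_{G}$ of a finite graph $G$ is (totally) unimodular, so we are in the setting of Theorem~\ref{thm:classification}, and by construction $M(A_{G}) = M(G)$ is the graphic matroid of $G$.

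First, I would invoke Theorem~\ref{thm:classification} applied to the unimodular matrices $A_{G_1}$ and $A_{G_2}$ (possibly after adjusting the number of columns; if the two graphs have different numbers of edges then both the hypertoric varieties and the graphic matroids have different ``size'', so we may safely assume the edge counts agree). The equivalence (i) $\Leftrightarrow$ (iii) of that theorem gives
\[
Y(A_{G_1}, 0) \cong Y(A_{G_2}, 0) \text{ as conical symplectic varieties} \iff M(A_{G_1}) \cong M(A_{G_2}).
\]
Since $M(A_{G_i}) = M(G_i)$ by the definition of a graphic matroid recalled just before Theorem~\ref{thm:Whitney}, this reduces the claim to the equivalence $M(G_1) \cong M(G_2) \iff G_2$ is obtained from $G_1$ by a sequence of vertex identifications, vertex cleavings, and Whitney twists. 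But this is exactly the content of Whitney's 2-isomorphism theorem (Theorem~\ref{thm:Whitney}), so the corollary follows immediately.

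The only subtlety, and the main (very mild) obstacle, is to justify that we are allowed to assume the two graphs have the same number of edges and no isolated vertices, so that Theorem~\ref{thm:classification} applies to matrices of the same size. Isolated vertices are excluded by hypothesis. For the edge count, note that $\dim Y(A_{G_i}, 0) = 2(n_i - d_i)$ where $n_i = |E(G_i)|$ and $d_i$ is the rank of $A_{G_i}$ (equal to $|V(G_i)| - c(G_i)$ with $c(G_i)$ the number of connected components). An isomorphism as conical symplectic varieties forces $\dim Y(A_{G_1}, 0) = \dim Y(A_{G_2}, 0)$, and once one observes (using e.g.\ Lemma~\ref{lem:product} to split into connected components and the fact that the number of edges of $G_i$ equals $|E(M(G_i))|$) that $M(G_1) \cong M(G_2)$ forces $n_1 = n_2$ and $d_1 = d_2$, both directions of the equivalence become symmetric and Theorem~\ref{thm:classification} applies directly. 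No further computation is needed.
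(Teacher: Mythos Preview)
Your approach is exactly the paper's: the corollary is stated immediately after Theorem~\ref{thm:classification} with the one-line justification ``By applying the Whitney's 2-isomorphism theorem (Theorem~\ref{thm:Whitney}), we get the criterion\ldots'', and you have spelled out precisely that composition. Regarding the size issue you raise, note that the paper's standing assumption $\bm{b_i}\neq 0$ (equivalently, the graphs have no bridges) makes the base $\C^d/W_B$ of the universal Poisson deformation an invariant of the conical symplectic variety, so $d$ and hence $n$ are forced to match; your discussion of this point is slightly more careful than the paper's, which leaves it entirely implicit.
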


Now, using Theorem \ref{thm:classification}, we will describe all isomorphism classes of 4- and 6-dimensional affine hypertoric varieties (in this case, all ones are described as toric quiver varieties). Since $\dim Y(A, 0)=2(n-d)$, we only have to classify unimodular $n\times(n-d)$-matrices $B$ up to the equivalence relation when $n-d=2$ and 3. 

To describe concisely, we always write $B$ as the following form: 
\begin{equation*}
\arraycolsep=4pt
B=\left(
\begin{array}{ccc}
&\text{\huge{$B^{(1)}$}}&\\
\hline
&\text{\huge{$B^{(2)}$}}&\\
\hline
&\vdots&\\
\hline
&\text{\huge{$B^{(s)}$}}&\\
\end{array}
\right), \ \ \ 
B^{(k)}=\begin{array}{rcccll}
	\ldelim({3}{4pt}[] &&\bm{b^{(k)}}&&\rdelim){3}{4pt}[]&\rdelim\}{3}{10pt}[$\ell_k$] \\
	&&\vdots&& \\
	&& \bm{b^{(k)}} && \\
	\end{array} \ \ , \arraycolsep=2pt
\end{equation*}
where if $k_1 \neq k_2$, then $\bm{b^{(k_1)}}\neq\pm\bm{b^{(k_2)}}$. 
Also, we define the reduction $\overline{B}$ of $B$ as the following $s\times (n-d)$ matrix: 
\[\overline{B}:=\left(\begin{array}{ccc}
&\bm{b^{(1)}}&\\
&\bm{b^{(2)}}&\\
&\vdots&\\
&\bm{b^{(s)}}&
\end{array}\right).\]
In this case, we call $B=(\overline{B}: \ell_1, \ldots, \ell_s)$ the {\it reduced expression} of $B$. 
Since one can easily see the corresponding matroid $M(\overline{B}^T)$ to the reduction $\overline{B}^T$ is so-called the {\it simplification} of $M(B^T)$ (cf.\ \cite[p.49]{Ox}). Since the simplification of a matroid is unique up to matroid isomorphisms, we have the following.  
\begin{corollary}
Let $B$ (resp. $B'$) be a unimodular $n\times (n-d)$ matrix, and consider its simplification $\overline{B}\in\Mat_{d\times s}(\Z)$ (resp. $\overline{B'}\in\Mat_{d\times s'}(\Z)$). Take {$\underline{A}$ (resp. $\underline{A'}$)} as a unimodular matrix which satisfies the exact sequence $\begin{tikzcd}[column sep=1ex]0\ar[r]&[1ex]\Z^{n-s^{(')}}\ar[r, "{\overline{B}^{(')}}"] &[3ex]\Z^n\ar[r, "{{\underline{A}^{(')}}}"]&[3ex]\Z^{s^{(')}}\ar[r]&[1ex]0\end{tikzcd}$ respectively.  If $Y(A, 0)\cong Y(A', 0)$, then {$Y(\underline{A}, 0)\cong Y(\underline{A'}, 0)$}
\end{corollary}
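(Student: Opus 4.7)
The plan is to chain together Theorem \ref{thm:classification} (applied twice, in both directions) with the general fact from matroid theory that the simplification of a matroid is an invariant of its isomorphism class.

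First I would translate the hypothesis $Y(A,0)\cong Y(A',0)$ (as conical symplectic varieties) into a statement about matroids via the equivalence (i) $\Leftrightarrow$ (iv) of Theorem \ref{thm:classification}: this yields a matroid isomorphism $M(B^T)\cong M(B'^T)$. Next, the simplification operation on matroids (delete loops and keep one representative of each parallel class) is functorial for matroid isomorphisms and produces a matroid that is unique up to isomorphism (cf.\ \cite[p.\ 49]{Ox}). Hence $M(B^T)\cong M(B'^T)$ forces an isomorphism between their simplifications. By the observation highlighted just before the statement, the simplification of $M(B^T)$ is exactly $M(\overline{B}^T)$ (and similarly for $B'$), so we obtain $M(\overline{B}^T)\cong M(\overline{B'}^T)$.

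Now I would close the loop by reapplying Theorem \ref{thm:classification} (iv) $\Rightarrow$ (i) to the matrices $\underline{A},\underline{A'}$ in the exact sequences $0\to\Z^{n-s}\xrightarrow{\overline{B}}\Z^{n}\xrightarrow{\underline{A}}\Z^{s}\to 0$ (and analogously for $\underline{A'}$) to conclude $Y(\underline{A},0)\cong Y(\underline{A'},0)$ as conical symplectic varieties. Before doing so, I have to check that Theorem \ref{thm:classification} applies, i.e.\ that $\underline{A}$ (equivalently $\overline{B}$) is a unimodular matrix with no zero columns/rows in the relevant sense. Unimodularity is immediate: $\overline{B}$ is obtained from $B$ by selecting a maximal subset of pairwise non-proportional non-zero rows, so every square submatrix of $\overline{B}$ is (up to row permutation and sign) a square submatrix of $B$, hence has determinant $0$ or $\pm 1$; this forces $\underline{A}$ to be chosen unimodular as well. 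The standing assumption $\bm{b_i}\ne 0$ (which may be imposed using Corollary \ref{cor:Breduction}) guarantees that there are no loops to discard, so passing to the simplification only collapses parallel elements.

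The only mildly subtle step is the verification that matroid simplification respects isomorphism in a way compatible with the bijective labelling needed for Theorem \ref{thm:classification}; this is routine since a matroid isomorphism $M(B^T)\to M(B'^T)$ descends to a bijection between parallel classes, inducing the required isomorphism $M(\overline{B}^T)\to M(\overline{B'}^T)$. No genuinely hard step arises: the whole corollary is really a reformulation of the fact that $Y(A,0)$ depends only on the matroid $M(B^T)$, and matroids isomorphic to each other have isomorphic simplifications.
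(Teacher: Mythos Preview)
Your proposal is correct and follows exactly the approach the paper intends: the paper does not write out a formal proof but justifies the corollary in the sentence immediately preceding it, noting that $M(\overline{B}^T)$ is the simplification of $M(B^T)$ and that simplification is unique up to matroid isomorphism, which combined with Theorem \ref{thm:classification} gives the result. Your write-up simply spells out these steps (and the unimodularity check for $\overline{B}$) in more detail than the paper does.
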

Thus, we only have to classify regular matroids associated to totally unimodular matrices which don't have any parallel row vectors. Then, one can classify $\overline{B}$ in the case of $n-d=2$ or 3 by direct computations as the proposition below. All regular matroids of $\rank=2$ or 3 can be realized as graphic matroids $M(H)$ associated to some planar graphs $H$. Moreover, the simplification of $M(H)$ is the same as $M(\overline{H})$, where $\overline{H}$ is a graph obtained by replacing all multiple edges of $H$ by a simple edge. 
\begin{proposition}\hspace{2pt}\label{prop:classify_graph}\\
\begin{itemize}
\vspace{-5mm}
\item[(1)]If $n-d=2$, then the reduction $\overline{B}$ of $B$ will be equivalent to exactly one of the following and the associated matroid $M(\overline{B}^T)$ to each of them is the graphic matroid associated to the following graphs respectively.   
\[\overline{B}_1:=\begin{pmatrix}
1&0\\
0&1
\end{pmatrix}, \ \ \ M({\overline{B}_1}^T)\cong M(\overline{H}_1), \ \ \ \overline{H}_1=\begin{tikzcd}[row sep=small, column sep=small]\bullet\ar[r, no head]&\bullet\ar[r, no head]&\bullet\end{tikzcd},\]
\[  
\overline{B}_2:=\begin{pmatrix}
1&0\\
0&1\\
1&1
\end{pmatrix}, \ \ \ M({\overline{B}_2}^T)\cong M(\overline{H}_2), \ \ \ \overline{H}_2=\begin{tikzcd}[row sep=small, column sep=small]&\bullet&\\\bullet\ar[rr, no head]\ar[ur, no head]&&\bullet\ar[ul, no head]\end{tikzcd}.\]
\item[(2)]If $n-d=3$, then the reduction of $B$ will be equivalent to exactly one of the following: 
\[\overline{B}_3:=\begin{pmatrix}
1&0&0\\
0&1&0\\
0&0&1\\
\end{pmatrix},  \ \ \ M({\overline{B}_3}^T)\cong M(\overline{H}_3), \ \ \ \overline{H}_3=\begin{tikzcd}[row sep=small, column sep=small]
\bullet\ar[r ,no head]&\bullet\ar[r ,no head]&\bullet\ar[r ,no head]&\bullet
\end{tikzcd},\]
\[\overline{B}_4:=\begin{pmatrix}
1&0&0\\
0&1&0\\
0&0&1\\
1&1&0\\
\end{pmatrix},  \ \ \ M({\overline{B}_4}^T)\cong M(\overline{H}_4),  \ \ \ \overline{H}_4=\begin{tikzcd}[row sep=small, column sep=small]
&&\bullet\ar[dd ,no head]\\
\bullet\ar[r ,no head]&\bullet\ar[ur ,no head]\ar[dr ,no head]&\\
&&\bullet
\end{tikzcd},\]
\[\overline{B}_5:=\begin{pmatrix}
1&0&0\\
0&1&0\\
0&0&1\\
1&1&1
\end{pmatrix},  \ \ \ M({\overline{B}_5}^T)\cong M(\overline{H}_5),  \ \ \ \overline{H}_5=\begin{tikzcd}[row sep=small, column sep=small]
\bullet\ar[r ,no head]\ar[d ,no head]&\bullet\ar[d ,no head]\\
\bullet\ar[r ,no head]&\bullet
\end{tikzcd},\]
\[\overline{B}_6:=\begin{pmatrix}
1&0&0\\
0&1&0\\
0&0&1\\
1&1&0\\
1&0&1
\end{pmatrix}, \ \ \ M({\overline{B}_6}^T)\cong M(\overline{H}_6), \ \ \ \overline{H}_6=\begin{tikzcd}[row sep=small, column sep=small]
\bullet\ar[r ,no head]\ar[d ,no head]\ar[dr ,no head]&\bullet\ar[d ,no head]\\
\bullet\ar[r ,no head]&\bullet
\end{tikzcd},\]
\[\overline{B}_7:=\begin{pmatrix}
1&0&0\\
0&1&0\\
0&0&1\\
1&1&0\\
1&0&1\\
1&1&1
\end{pmatrix}, \ \ \ M({\overline{B}_7}^T)\cong M(\overline{H}_7), \ \ \ \overline{H}_7=\begin{tikzcd}[row sep=small, column sep=small]
&\bullet\ar[d ,no head]\ar[ddl ,no head]\ar[ddr ,no head]&\\
&\bullet\ar[dl ,no head]\ar[dr ,no head]&\\
\bullet\ar[rr ,no head]&&\bullet
\end{tikzcd}.\]

\end{itemize}
\end{proposition}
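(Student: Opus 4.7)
The plan is to reduce to a matroid classification problem and then carry out a case analysis. Since $B$ is unimodular, the reduction $\overline{B}$ is itself totally unimodular, and by construction its rows are nonzero and pairwise non-parallel; hence $M(\overline{B}^T)$ is a simple regular matroid of rank exactly $n-d$. By the theorem immediately preceding this proposition (applied to $\overline{B}^T$), equivalence of two reductions corresponds precisely to isomorphism of the associated vector matroids. Therefore, classifying the $\overline{B}$ up to equivalence is equivalent to classifying simple regular matroids of rank $n-d$, and to verify the listed cases it suffices in each to exhibit a matroid isomorphism $M(\overline{B}_i^T) \cong M(\overline{H}_i)$.

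For $n - d = 2$: every totally unimodular $s \times 2$ matrix has entries in $\{-1, 0, 1\}$, and after $GL_2(\Z)$-normalization two independent rows may be taken as $\bm{e}_1, \bm{e}_2$. Any further row $(a, b) \in \{-1, 0, 1\}^2$ that is nonzero and not $\pm \bm{e}_1, \pm \bm{e}_2$ is, up to row and column sign changes, the vector $(1, 1)$. The two rows $(1, 1)$ and $(1, -1)$ cannot coexist, as the corresponding $2 \times 2$ minor equals $-2$. This leaves exactly two cases: $\overline{B}_1$ (matroid $U_{2,2}$) and $\overline{B}_2$ (matroid $U_{2,3}$), which one checks directly coincide with the cycle matroids of $\overline{H}_1 = P_3$ and $\overline{H}_2 = C_3$.

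For $n - d = 3$: the analogous normalization brings three independent rows to $\bm{e}_1, \bm{e}_2, \bm{e}_3$, and each remaining row is, up to sign changes, of the form $\bm{e}_i + \bm{e}_j$ (for $i < j$) or $\bm{e}_1 + \bm{e}_2 + \bm{e}_3$. Total unimodularity excludes certain combinations: for instance, the three rows $\bm{e}_1 + \bm{e}_2$, $\bm{e}_1 + \bm{e}_3$, $\bm{e}_2 + \bm{e}_3$ together produce a $3 \times 3$ minor of determinant $-2$ and hence cannot all appear. A finite case analysis, stratified by which of the four non-basic candidate rows are included and by the resulting pattern of 3-circuits, yields exactly the five matrices $\overline{B}_3, \ldots, \overline{B}_7$. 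For each $i$, choosing a spanning tree of $\overline{H}_i$ and writing each non-tree edge as a unique fundamental cycle exhibits $\overline{B}_i$ as a cycle-basis matrix of $\overline{H}_i$, confirming $M(\overline{B}_i^T) \cong M(\overline{H}_i)$. Pairwise non-isomorphism of these five matroids follows from the invariants (number of elements, number of 3-circuits), which take the values $(3,0), (4,1), (4,0), (5,2), (6,4)$ respectively.

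The main obstacle is completeness of the rank-$3$ list: one must verify that no additional combinations of the candidate vectors survive total unimodularity. This can be done by the direct case analysis sketched above, or more conceptually via the excluded-minor characterization of regular matroids (forbidding $U_{2,4}$, $F_7$, $F_7^*$ as minors), which forces simple regular matroids of rank $3$ with at most $6$ elements to be graphic. Whitney's 2-isomorphism theorem (Theorem \ref{thm:Whitney}) then reduces the enumeration to simple connected graphs on $4$ vertices up to vertex identification/cleaving and Whitney twist, giving precisely the five graphs $\overline{H}_3, \ldots, \overline{H}_7$.
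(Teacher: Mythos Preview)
Your proposal is correct and already more detailed than the paper, which offers no proof beyond the sentence ``one can classify $\overline{B}$ in the case of $n-d=2$ or $3$ by direct computations'' together with the remark that rank~$2$ and~$3$ regular matroids are graphic. Your two routes (explicit case analysis and the matroid-theoretic reduction via excluded minors plus Whitney's theorem) are both valid.

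One point worth tightening in the rank-$3$ case analysis: the sentence ``each remaining row is, up to sign changes, of the form $\bm{e}_i+\bm{e}_j$ or $\bm{e}_1+\bm{e}_2+\bm{e}_3$'' and the subsequent restriction to ``the four non-basic candidate rows'' is not literally justified by the preceding normalization. Column sign changes act on all rows simultaneously, so for instance the pair $(1,1,0)$, $(0,1,-1)$ cannot be made simultaneously non-negative while keeping $\bm{e}_1,\bm{e}_2,\bm{e}_3$ fixed; one must either carry such mixed-sign rows through the case analysis or (as you do in your alternative argument) invoke that simple regular rank-$3$ matroids are graphic---since $M^*(K_{3,3})$ and $M^*(K_5)$ have rank $\geq 4$---and then enumerate simple graphs of cycle rank~$3$. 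The latter route is cleaner and makes the completeness of the list immediate.
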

\begin{remark}\hspace{2pt}\label{rem:quiver2}\\
\begin{itemize}
\vspace{-5mm}
\item[(1)]
As remarked at Remark \ref{rem:graphic} (2), if $M(B^T)$ is isomorphic to a graphic matroid $M(H)$ corresponding to a planar graph $H$, then the dual matroid $M(A)$ is also isomorphic to a graphic matroid corresponding to some planar graph $G$. Since this means $A\sim A_G$, the corresponding hypertoric variety is the toric quiver variety associated to $G$ (cf.\ Example \ref{eg:quiver}). More explicitly, in 4- and 6-dimensional case, we can describe a double (framed) quiver $\overline{Q}_{G_i}^{\bm{\ell}}$ constructed by the process in Example \ref{eg:quiver} from $G_i^{\bm{\ell}}$ corresponding to $B_i$ whose reduced expression is $(\overline{B_i}: \bm{\ell}):=(\overline{B_i}: \ell_1, \ldots ,\ell_{s_i})$ as the following: 

\[\overline{Q}_{G_1^{{\ell_1, \ell_2}}}=\left(\begin{tikzcd}[row sep=small, column sep=small]
\circ\ar[r, shift left, ""]&\bullet\ar[r, shift left, ""]\ar[l, shift left, ""]&{}\ar[r, dashed, no head]\ar[l, shift left, ""]&{}\ar[r, shift left, ""]&\bullet\ar[r, shift left, ""]\ar[l, shift left, ""]&\circ\ar[l, shift left, ""]\\
\circ\ar[r, shift left, ""]&\bullet\ar[r, shift left, ""]\ar[l, shift left, ""]&{}\ar[r, dashed, no head]\ar[l, shift left, ""]&{}\ar[r, shift left, ""]&\bullet\ar[r, shift left, ""]\ar[l, shift left, ""]&\circ\ar[l, shift left, ""]\\
\end{tikzcd}\right),
\]
\[\overline{Q}_{G_2^{{\ell_1, \ell_2, \ell_3}}}=\left(\begin{tikzcd}[row sep=small, column sep=small]
&\bullet\ar[dl, shift left, ""]\ar[r, shift left, ""]&{}\ar[r, dashed, no head]\ar[l, shift left, ""]&{}\ar[r, shift left, ""]&\bullet\ar[l, shift left, ""]\ar[r, shift left, ""]&\circ\ar[l, shift left, ""]\\
\bullet\ar[r, shift left, ""]\ar[ur, shift left, ""]\ar[dr, shift left, ""]&\bullet\ar[l, shift left, ""]\ar[r, shift left, ""]&{}\ar[r, dashed, no head]\ar[l, shift left, ""]&{}\ar[r, shift left, ""]&\bullet\ar[l, shift left, ""]\ar[r, shift left, ""]&\circ\ar[l, shift left, ""]\\
&\bullet\ar[ul, shift left, ""]\ar[r, shift left, ""]&{}\ar[r, dashed, no head]\ar[l, shift left, ""]&{}\ar[r, shift left, ""]&\bullet\ar[l, shift left, ""]\ar[r, shift left, ""]&\circ\ar[l, shift left, ""]\\
\end{tikzcd}\right),
\]
\[\overline{Q}_{G_3^{{\ell_1, \ell_2, \ell_3}}}=\left(\begin{tikzcd}[row sep=small, column sep=small]
\circ\ar[r, shift left, ""]&\bullet\ar[r, shift left, ""]\ar[l, shift left, ""]&{}\ar[r, dashed, no head]\ar[l, shift left, ""]&{}\ar[r, shift left, ""]&\bullet\ar[r, shift left, ""]\ar[l, shift left, ""]&\circ\ar[l, shift left, ""]\\
\circ\ar[r, shift left, ""]&\bullet\ar[r, shift left, ""]\ar[l, shift left, ""]&{}\ar[r, dashed, no head]\ar[l, shift left, ""]&{}\ar[r, shift left, ""]&\bullet\ar[r, shift left, ""]\ar[l, shift left, ""]&\circ\ar[l, shift left, ""]\\
\circ\ar[r, shift left, ""]&\bullet\ar[r, shift left, ""]\ar[l, shift left, ""]&{}\ar[r, dashed, no head]\ar[l, shift left, ""]&{}\ar[r, shift left, ""]&\bullet\ar[r, shift left, ""]\ar[l, shift left, ""]&\circ\ar[l, shift left, ""]\\
\end{tikzcd}\right),
\]
\[\overline{Q}_{G_4^{{\ell_1, \ell_2, \ell_3, \ell_4}}}=\left(\begin{tikzcd}[row sep=small, column sep=small]
\circ\ar[r, shift left, ""]&\bullet\ar[r, shift left, ""]\ar[l, shift left, ""]&{}\ar[r, dashed, no head]\ar[l, shift left, ""]&{}\ar[r, shift left, ""]&\bullet\ar[r, shift left, ""]\ar[l, shift left, ""]&\circ\ar[l, shift left, ""]\\
&\bullet\ar[dl, shift left, ""]\ar[r, shift left, ""]&{}\ar[r, dashed, no head]\ar[l, shift left, ""]&{}\ar[r, shift left, ""]&\bullet\ar[l, shift left, ""]\ar[r, shift left, ""]&\circ\ar[l, shift left, ""]\\
\bullet\ar[r, shift left, ""]\ar[ur, shift left, ""]\ar[dr, shift left, ""]&\bullet\ar[l, shift left, ""]\ar[r, shift left, ""]&{}\ar[r, dashed, no head]\ar[l, shift left, ""]&{}\ar[r, shift left, ""]&\bullet\ar[l, shift left, ""]\ar[r, shift left, ""]&\circ\ar[l, shift left, ""]\\
&\bullet\ar[ul, shift left, ""]\ar[r, shift left, ""]&{}\ar[r, dashed, no head]\ar[l, shift left, ""]&{}\ar[r, shift left, ""]&\bullet\ar[l, shift left, ""]\ar[r, shift left, ""]&\circ\ar[l, shift left, ""]\\
\end{tikzcd}\right),\]
\[\overline{Q}_{G_5^{{\ell_1, \ell_2, \ell_3, \ell_4}}}=\left(\begin{tikzcd}[row sep=small, column sep=small]
\circ\ar[r, shift left, ""]&\bullet\ar[r, shift left, ""]\ar[l, shift left, ""]&{}\ar[r, dashed, no head]\ar[l, shift left, ""]&{}\ar[r, shift left, ""]&\bullet\ar[dr, shift left, ""]\ar[l, shift left, ""]&&\bullet\ar[dl, shift left, ""]\ar[r, shift left, ""]&{}\ar[r, dashed, no head]\ar[l, shift left, ""]&{}\ar[r, shift left, ""]&\bullet\ar[r, shift left, ""]\ar[l, shift left, ""]&\circ\ar[l, shift left, ""]\\
&&&&&\bullet\ar[dl, shift left, ""]\ar[ur, shift left, ""]\ar[ul, shift left, ""]\ar[dr, shift left, ""]&&&&&\\
\circ\ar[r, shift left, ""]&\bullet\ar[r, shift left, ""]\ar[l, shift left, ""]&{}\ar[r, dashed, no head]\ar[l, shift left, ""]&{}\ar[r, shift left, ""]&\bullet\ar[ur, shift left, ""]\ar[l, shift left, ""]&&\ar[ul, shift left, ""]\bullet\ar[r, shift left, ""]&{}\ar[r, dashed, no head]\ar[l, shift left, ""]&{}\ar[r, shift left, ""]&\bullet\ar[r, shift left, ""]\ar[l, shift left, ""]&\circ\ar[l, shift left, ""]\\
\end{tikzcd}\right),
\]
\[\overline{Q}_{G_6^{{\ell_1, \ell_2, \ell_3, \ell_4, \ell_5}}}=\left(\begin{tikzcd}[row sep=small, column sep=small]
\circ\ar[r, shift left, ""]&\bullet\ar[l, shift left, ""]\ar[r, shift left, ""]&{}\ar[r, dashed, no head]\ar[l, shift left, ""]&{}\ar[r, shift left, ""]&\bullet\ar[l, shift left, ""]\ar[r, shift left, ""]&\bullet\ar[l, shift left, ""]\ar[r, shift left, ""]\ar[d, shift left, ""]&\bullet\ar[r, shift left, ""]\ar[l, shift left, ""]&{}\ar[r, dashed, no head]\ar[l, shift left, ""]&{}\ar[r, shift left, ""]&\bullet\ar[r, shift left, ""]\ar[l, shift left, ""]&\circ\ar[l, shift left, ""]\\
&&&&&\bullet\ar[d, shift left, ""]\ar[u, shift left, ""]&&&&&\\
&&&&&{}\ar[u, shift left, ""]\ar[d, dashed, no head]&&&&&\\
&&&&&{}\ar[d, shift left, ""]&&&&&\\
\circ\ar[r, shift left, ""]&\bullet\ar[l, shift left, ""]\ar[r, shift left, ""]&{}\ar[r, dashed, no head]\ar[l, shift left, ""]&{}\ar[r, shift left, ""]&\bullet\ar[l, shift left, ""]\ar[r, shift left, ""]&\bullet\ar[l, shift left, ""]\ar[r, shift left, ""]\ar[u, shift left, ""]&\bullet\ar[r, shift left, ""]\ar[l, shift left, ""]&{}\ar[l, shift left, ""]\ar[r, dashed, no head]&{}\ar[r, shift left, ""]&\bullet\ar[r, shift left, ""]\ar[l, shift left, ""]&\circ\ar[l, shift left, ""]
\end{tikzcd}\right),
\]
\[\overline{Q}_{G_7^{{\ell_1, \ell_2, \ell_3, \ell_4, \ell_5, \ell_6}}}=\left(\begin{tikzcd}[row sep=small, column sep=small]
&&&&&&&\bullet\ar[dl, shift left, ""]\ar[r, shift left, ""]\ar[d, shift left, ""]&{}\ar[r, dashed, no head]\ar[l, shift left, ""]&{}\ar[r, shift left, ""]&\bullet\ar[r, shift left, ""]\ar[l, shift left, ""]&\circ\ar[l, shift left, ""]\\
&&&&&&{}\ar[ur, shift left, ""]&\bullet\ar[d, shift left, ""]\ar[u, shift left, ""]&&&&\\
&&&&&{}\ar[dl, shift left, ""]\ar[ur, dashed, no head]&&{}\ar[u, shift left, ""]\ar[d, dashed, no head]&&&&\\
\circ\ar[r, shift left, ""]&\bullet\ar[r, shift left, ""]\ar[l, shift left, ""]&{}\ar[r, dashed, no head]\ar[l, shift left, ""]&{}\ar[r, shift left, ""]&\bullet\ar[l, shift left, ""]\ar[dr, shift left, ""]\ar[ur, shift left, ""]&&&{}\ar[d, dashed, no head]&&&&\\
&&&&&{}\ar[ul, shift left, ""]\ar[dr, dashed, no head]&&{}\ar[d, shift left, ""]&&&&\\
&&&&&&{}\ar[dr, shift left, ""]&\bullet\ar[d, shift left, ""]\ar[u, shift left, ""]&&&&\\
&&&&&&&\bullet\ar[ul, shift left, ""]\ar[r, shift left, ""]\ar[u, shift left, ""]&{}\ar[r, dashed, no head]\ar[l, shift left, ""]&{}\ar[r, shift left, ""]&\bullet\ar[r, shift left, ""]\ar[l, shift left, ""]&\circ\ar[l, shift left, ""]
\end{tikzcd}\right),\]
where the length of each chain is equal to each multiplicity $\ell_1, \ldots ,\ell_s$ respectively. 
\item[(2)]As noted at Remark \ref{rem:graphic} (2), if $M(B^T)$ is not isomorphic to any graphic matroid associated to a planar graph, then $M(A)$ cannot be realized as a graphic matroid. For example, if we take $H=K_5$, that is the complete graph on 5 vertices, then the corresponding 8-dimensional hypertoric variety is not isomorphic to any toric quiver variety as conical symplectic varieties.    
\end{itemize}   
\end{remark}

From the proposition above, we can describe more explicitly all non-isomorphic 4-dimensional affine hypertoric varieties as the following. 

\begin{theorem}\label{thm:4dimension}
Each 4-dimensional affine hypertoric variety $Y(A, 0)$ associated to a unimodular matrix $A$ is isomorphic to exactly one of the following as conical symplectic varieties:
\begin{itemize}
\item[(i)]$S_{A_{\ell_1-1}}\times S_{A_{\ell_2-1}}$.
\item[(ii)]$\overline{{\calO}^{\tmin}}(\{\ell_1, \ell_2, \ell_3\}):=\Set{\begin{pmatrix}u_1&x_{12}&x_{13}\\y_{12}&u_2&x_{23}\\y_{13}&y_{23}&u_3\end{pmatrix} \in \gersl_3 \ | \ \text{All 2-minors of} \begin{pmatrix}u_1^{\ell_1}&x_{12}&x_{13}\\y_{12}&u_2^{\ell_2}&x_{23}\\y_{13}&y_{23}&u_3^{\ell_3}\end{pmatrix}=0}$,
\end{itemize}
where $S_{A_{\ell-1}}$ is the $A_{\ell-1}$ type surface singularity. 
Furthermore, the Namikawa--Weyl group $W$ is given by the following respectively: 
\begin{itemize}
\setlength{\leftskip}{1.0cm}
\item[Case(i):]$W=\gerS_{\ell_1}\times\gerS_{\ell_2}$.
\item[Case(ii):]$W=\gerS_{\ell_1}\times\gerS_{\ell_2}\times\gerS_{\ell_3}$.
\end{itemize}
\end{theorem}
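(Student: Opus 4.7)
The plan is to reduce to a finite case-list via the classification machinery of Section~2 and then identify each case by direct ring-theoretic computation. First, since $\dim Y(A,0)=2(n-d)=4$, we must have $n-d=2$. Invoking Corollary~\ref{cor:Breduction} we may assume $\bm{b_i}\neq 0$ for all $i$, and by Theorem~\ref{thm:classification} the conical symplectic isomorphism class of $Y(A,0)$ depends only on the matroid $M(B^T)$, equivalently on the reduced expression $(\overline{B}:\ell_1,\ldots,\ell_s)$. Proposition~\ref{prop:classify_graph}(1) then leaves exactly two possibilities, $\overline{B}=\overline{B}_1$ (so $s=2$) or $\overline{B}=\overline{B}_2$ (so $s=3$), which I shall treat separately.

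For case (i), the matrix $B$ is block diagonal, $B = B_1\oplus B_2$, with each $B_k$ a single column of $\ell_k$ ones. Then Lemma~\ref{lem:product} together with Remark~\ref{rem:product} yields $Y(A,0)\cong Y(A_1,0)\times Y(A_2,0)$, and Example~\ref{Amsurf} identifies each factor with $S_{A_{\ell_k-1}}$. For case (ii), I would take $A$ coming from the triangle graph $\overline{H}_2$ with edge multiplicities $\ell_1,\ell_2,\ell_3$ (cf.\ Example~\ref{eg:quiver} and Remark~\ref{rem:quiver2}), so $Y(A,0)$ is the toric quiver variety of the triangle. Using Lemma~\ref{ring str lem} I would compute minimal generators of $\C[Y(A,0)]$: the diagonal entry $u_k$ is represented by any $z_iw_i$ with $i$ in the $k$-th block (different choices agree modulo the moment map ideal $J_\mu$), while the off-diagonal entries $x_{ij}, y_{ij}$ arise from the two path monomials $f_{\pm\bm\beta_{ij}}$ obtained by multiplying the $z$'s (resp.\ $w$'s) along the edge of multiplicity $\ell_k$ between vertices $i$ and $j$. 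I would then write down an explicit $\C$-algebra surjection
\[
\C[\overline{\calO^{\tmin}}(\{\ell_1,\ell_2,\ell_3\})]\twoheadrightarrow \C[Y(A,0)]
\]
sending matrix entries to these generators, and match the defining ideals. Normality, irreducibility and the correct dimension are furnished by Proposition~\ref{prop:birational}, which will upgrade the surjection to an isomorphism.

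The Namikawa--Weyl groups are then read off Theorem~\ref{thm:Mainthm}: $W=W_B=\gerS_{\ell_1}\times\cdots\times\gerS_{\ell_s}$ in both cases. Finally, distinct entries in the list correspond to inequivalent reductions $\overline B$ by Proposition~\ref{prop:classify_graph}(1), hence to non-isomorphic varieties by Theorem~\ref{thm:classification}. The main obstacle will be the identification in case (ii), namely verifying that the defining ideal of $Y(A,0)$ equals the ideal of $2\times 2$-minors of the ``raised-power'' matrix together with the $\gersl_3$-trace condition. This should reduce to checking, via the multiplication rules of the $f_{\bm\beta}$'s inside $\C[\bm z,\bm w]$, that every Pl\"ucker-type relation among the generators is already forced by these minors and that no extra relation is imposed.
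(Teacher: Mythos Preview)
Your strategy matches the paper's: reduce via Proposition~\ref{prop:classify_graph}(1), dispatch case~(i) by Lemma~\ref{lem:product} plus Example~\ref{Amsurf}, and in case~(ii) compute $\C[Y(A,0)]$ directly from Lemma~\ref{ring str lem} (the paper first does $\ell_1=\ell_2=\ell_3=1$ explicitly, obtaining $\overline{\calO^{\tmin}_{A_2}}$, then writes down explicit $A,B$ for general $\ell_i$ and says the same computation goes through).

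One slip to correct: in Proposition~\ref{prop:classify_graph}(1) the triangle $\overline{H}_2$ realizes $M(\overline{B}_2^{\,T})$, not $M(A)$, so $Y(A,0)$ is \emph{not} the toric quiver variety of the triangle with multiplicities---that variety would have dimension $2(\ell_1+\ell_2+\ell_3-2)$. Rather $A\sim A_{G_2^{\bm\ell}}$ where $G_2^{\bm\ell}$ is the planar dual of the multi-triangle, namely the theta graph with two vertices joined by three internally disjoint paths of lengths $\ell_1,\ell_2,\ell_3$ (this is exactly the graph underlying the quiver in Remark~\ref{rem:quiver2}(1)). This does not derail your argument, since you ultimately compute with the explicit $B$ of reduced expression $(\overline{B}_2:\ell_1,\ell_2,\ell_3)$; but your description of the off-diagonal generators should be adjusted accordingly: each $x_{ij}$ (resp.\ $y_{ij}$) is $f_{\bm\beta}$ for a primitive $\bm\beta\in\Image B$, and is a product of the $z$'s from one block times the $w$'s from another block (e.g.\ $x_{13}=\prod_{i\in F_1}z_i\cdot\prod_{k\in F_3}w_k$), not a monomial supported on a single triangle edge.
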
 
\begin{proof}
By Proposition \ref{prop:classify_graph} (1), we only have to show that if the reduced expression of $B$ is $(\overline{B_1}: \ell_1, \ell_2)$ (resp. $(\overline{B_2}: \ell_1, \ell_2, \ell_3)$), then the corresponding affine hypertoric variety is isomorphic to $S_{A_{\ell_1-1}}\times S_{A_{\ell_2-1}}$ (resp. $\overline{{\calO}^{\tmin}}(\{\ell_1, \ell_2, \ell_3\})$). The first case is straightforward by combining Example \ref{Amsurf} and Remark \ref{rem:product}.  
On the second case, firstly we describe the case of $\ell_1=\ell_2=\ell_3=1$ (cf.\ Example \ref{Amnilpotent}). For our convenience, we take $B$ and $A$ as the following: 
\[B=\begin{pmatrix}1&0\\0&1\\-1&-1\end{pmatrix}, \ \ \ A=\begin{pmatrix}1&1&1\end{pmatrix}.\]  
Then, by Lemma \ref{ring str lem}, we can compute the coordinate rings of $X(A, 0)$ and $Y(A, 0)$ as the following:  
\begin{equation*}
\begin{split}
\C[X(A, 0)]&=\C[z_1w_1, z_2w_2, z_3w_3][z_1w_2, z_1w_3, z_2w_3, \ w_1z_2, w_1z_3, w_2z_3] \\
&\cong\C[u_1, u_2, u_3, x_{12}, x_{13}, x_{23}, y_{12}, y_{13}, y_{23}]/I, \\
\end{split}
\end{equation*}
\[\C[Y(A, 0)]\cong\C[u_1, u_2, u_3, x_{12}, x_{13}, x_{23}, y_{12}, y_{13}, y_{23}]/I+J_{\mu}, \]
\[\text{where} \ 
I=\left\langle \text{All $2\times2$-minors of} \begin{pmatrix}u_1&x_{12}&x_{13}\\y_{12}&u_2&x_{23}\\y_{13}&y_{23}&u_3\end{pmatrix} \right\rangle, \ \ \ \ J_\mu=\langle u_1+u_2+u_3\rangle.
\]
Thus, 
\[Y(A, 0) \cong \overline{{\calO}^{\tmin}}:=\Set{C \in \gersl_3\ | \ \text{All $2\times2$-minors of} \ C=0}.\]
In the case of general $\ell_1, \ell_2, \ell_3$, note that we can take $B$ and $A$ as the following: 
\[
B=\begin{array}{rccll}
	\ldelim({13}{4pt}[] &1&0&\rdelim){13}{4pt}[]& \\
	&0&1&&\\
\cline{2-3}
	&1&0&&\rdelim\}{3}{10pt}[$\ell_1-1$]\\
	&\vdots&\vdots& & \\
	&1&0& & \\
	&0&1& & \rdelim\}{3}{10pt}[$\ell_2-1$,] \\
	&\vdots&\vdots& & \\
	&0&1& & \\
	&-1 &-1 & & \rdelim\}{3}{10pt}[$\ell_3$]\\
	&\vdots&\vdots& & \\
	&-1&-1&& \\
	\end{array}  \ \ \ \ \ \ \ \ \ \ 
A=\left(\begin{array}{cc|ccc|ccc|ccc}
-1&0&&&&&&&&&\\
\vdots&\vdots&\vcbig{I_{\ell_1-1}}&&\vcbig{0}&&\vcbig{0}&\\
-1&0&&&&&&&&&\\
\cline{1-11}
0&-1&&&&&&&&&\\
\vdots&\vdots&\vcbig{0}&&\vcbig{I_{\ell_2-1}}&&\vcbig{0}&\\
0&-1&&&&&&&&&\\
\cline{1-11}
1&1&&&&&&&&&\\
\vdots&\vdots&\vcbig{0}&&\vcbig{0}&&\vcbig{I_{\ell_3}}&\\
1&1&&&&&&&&&
\end{array}\right).
\]
Then, one can directly compute the coordinate ring of $\overline{{\calO}^{\tmin}}(\{\ell_1,\ell_2,\ell_3\})$ by similar arguments, and one can get the desired description as the statement (ii). This completes the proof. 
\end{proof}
\begin{remark}\hspace{2pt}\\
\begin{itemize}
\vspace{-5mm}
\item[(1)]$\overline{{\calO}^{\tmin}}(\{1,1,1\})$ is the (usual) minimal nilpotent orbit closure $\overline{{\calO}^{\tmin}_{A_2}}$ of $A_2$ type. 
\item[(2)]By the similar construction above, one can consider the hypertoric varieties $\overline{{\calO}^{\tmin}}(\{\ell_1, \ldots, \ell_{m+1}\})$ as the following: 
\[\overline{{\calO}^{\tmin}}(\{\ell_1, \ldots, \ell_{m+1}\}):=\left\{C\in \gersl_{m+1} \ \Bigl|\Bigr. \ \text{All $2\times 2$-minors of} \ C(\ell_1, \ldots ,\ell_{m+1})=0\right\},
\]
where for $C$ such that its diagonal is $(u_1, \ldots ,u_{m+1})$, we define $C(\ell_1, \ell_2, \ldots, \ell_{m+1})$ as the matrix which has $(u_1^{\ell_1}, \ldots, u_{m+1}^{\ell_{m+1}})$ as the diagonal and other components are the same as $C$. 
Furthermore, in this case, the Namikawa--Weyl group $W$ is given by the following: 
\[W=\gerS_{\ell_1}\times\cdots\times\gerS_{\ell_{m+1}}.\]
\end{itemize}
\end{remark}
In the next section, we will count the number of all projective crepant resolutions of $\overline{{\calO}^{\tmin}}(\{\ell_1, \ell_2, \ell_3\})$.

\bigskip

In the remainder of this section, we will determine the necessary and sufficient condition for an affine hypertoric variety $Y(A, 0)$ to be $\Cstar$-equivariant isomorphic to a  nilpotent orbit closure $\overline{\calO}$ of some semisimple Lie algebra as an algebraic variety. Although this result is not a corollary of Theorem \ref{thm:classification}, we will prove this here. 

The more precise statement is the following. 
\begin{theorem}\label{thm:char_nilpotent}
{Let $\overline{\calO}$ be a nilpotent orbit closure of some semisimple Lie algebra. Then, there exists some (not necessarily unimodular) matrix $A$ such that $Y(A, 0)$ is $\Cstar$-equivariant isomorphic to $\overline{\calO}$ if and only if }
\[\overline{\calO}\cong\overline{{\calO}^{\tmin}_{A_{\ell_1}}}\times\cdots\times\overline{{\calO}^{\tmin}_{A_{\ell_s}}} ,\]
where $\overline{{\calO}^{\tmin}_{A_{\ell}}}$ is the minimal nilpotent orbit closure of type $A_{\ell}$ (\text{cf.\ Example \ref{Amnilpotent}}), {and we consider $\overline{\calO}$ as a conical symplectic variety with weight 2 under the square $\Cstar$-action of the usual $\Cstar$-action on $\overline{\calO}$ with weight 1.}   
\end{theorem}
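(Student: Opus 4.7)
The ``if'' direction is immediate from Example~\ref{Amnilpotent}, which realizes each $\overline{\calO^{\tmin}_{A_\ell}}$ as an affine hypertoric variety with conical $\Cstar$-action of weight $2$, together with Lemma~\ref{lem:product}.

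For the converse, my plan is to reduce first to the case of a simple ambient Lie algebra. Decompose $\gerg = \bigoplus_i \gerg_i$ into simple summands, giving $\overline{\calO} = \prod_i \overline{\calO_i}$ as conical symplectic varieties. By Theorem~\ref{thm:classification} the conical symplectic isomorphism class of $Y(A,0)$ is encoded in the matroid $M(A)$, and the product decomposition of $\overline{\calO}$ induces a direct-sum decomposition of $M(A)$ into connected components; each connected summand corresponds to an indecomposable affine hypertoric factor, which must then be isomorphic to one of the $\overline{\calO_i}$. Hence it suffices to treat the case where $\gerg$ is simple.

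Next, I would exploit the two structural constraints on hypertoric varieties furnished by Corollary~\ref{cor:codim2strata} and Theorem~\ref{thm:Mainthm}: the Namikawa--Weyl group of $Y(A,0)$ is always a product of symmetric groups $W_B = \gerS_{\ell_1} \times \cdots \times \gerS_{\ell_s}$, and every transverse slice at a codimension-$2$ symplectic leaf of $Y(A,0)$ is a type-$A$ Du Val singularity. If $\overline{\calO}$ is not the minimal nilpotent orbit closure, then by the Kraft--Procesi classification $\overline{\calO}$ has codimension-$2$ strata; for $\gerg$ of non-type-$A$, the monodromy data at some such stratum produces non-symmetric-group factors in the Namikawa--Weyl group (already visible for the nilpotent cone, whose Namikawa--Weyl group is the full Weyl group of $\gerg$), contradicting the hypertoric constraint. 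For $\gerg = \gersl_n$ and $\overline{\calO}$ non-minimal, I would proceed by induction on $\dim \overline{\calO}$: the codimension-$2$ transverse slices are themselves products of smaller type-$A$ nilpotent orbit closures, and matching the combinatorial data $W_B$ with the Kraft--Procesi stratification of $\overline{\calO}$ should force $\overline{\calO} = \overline{\calO^{\tmin}_{A_{n-1}}}$.

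Finally, the remaining case is that of minimal nilpotent orbit closures in simple $\gerg$ not of type $A$. One rules out $\overline{\calO^{\tmin}_{C_n}} \cong \C^{2n}/\Z_2$ ($n\geq 2$) by observing that it admits no projective symplectic resolution, whereas $\pi : Y(A,\alpha) \to Y(A,0)$ is one for generic $\alpha$. For the remaining simple types ($B_n$ with $n\geq 3$, $D_n$ with $n\geq 4$, $E_6, E_7, E_8, F_4, G_2$) one has $\dim \overline{\calO^{\tmin}} > 2\,\rank \gerg$; combined with the fact that a hypertoric variety of dimension $2m$ carries an effective Hamiltonian action of the $m$-dimensional residual torus $\T_\C^{n-d}$, while the maximal torus of Poisson symmetries of $\overline{\calO^{\tmin}}$ coincides with the image of the Cartan subgroup of $G$ and hence has rank $\rank \gerg$, this yields the desired contradiction. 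The main obstacle will be the inductive step in type $A$, where one must carefully analyze how the Kraft--Procesi stratification of $\overline{\calO}$ interacts with the hypertoric stratification by flats in order to exclude non-minimal $\gersl_n$-orbit closures from being hypertoric.
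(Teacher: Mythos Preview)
Your approach diverges substantially from the paper's and carries real gaps. The paper's proof is short and purely algebraic: it invokes Namikawa's characterization \cite{Namcharacterize} that a conical symplectic variety is a nilpotent orbit closure precisely when its coordinate ring is generated in minimal degree. Under the weight-$2$ convention this forces $\C[Y(A,0)]$, and hence $\C[X(A,0)]$, to be generated in degree~$2$. By Lemma~\ref{ring str lem} and Remark~\ref{rem:deg=1} the degree-$2$ monomials in $\C[X(A,0)]$ are exactly the $z_iw_i$ together with $f_{\bm\beta}$ for $\bm\beta \in (\Image B)_2 := \{\pm\bm{e_i}\pm\bm{e_j}\}\cap\Image B$. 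A direct linear-algebra argument (the Proposition immediately following the theorem) then shows that if $(\Image B)_2$ generates $\Image B$ as a lattice, $A$ is equivalent to a block-diagonal matrix of all-$1$ rows, whence $Y(A,0)$ is the claimed product. No case analysis on $\gerg$, no stratification or Namikawa--Weyl data, and no unimodularity hypothesis are used.

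Your route, by contrast, repeatedly invokes Theorem~\ref{thm:classification}, Corollary~\ref{cor:codim2strata}, and Theorem~\ref{thm:Mainthm}, all of which are established in the paper only for \emph{unimodular}~$A$, whereas the statement explicitly allows non-unimodular~$A$; you would first need to reduce to the unimodular case, and that reduction is not obvious (for instance, $Y(A,\alpha)$ need not be smooth for generic~$\alpha$). Even granting unimodularity, several steps are unjustified: the assertion that a product decomposition of $\overline{\calO}$ forces a direct-sum decomposition of $M(A)$ with matching hypertoric factors needs an argument; the blanket claim that every non-minimal orbit closure in a simple non-type-$A$ algebra acquires a non-symmetric-group factor in its Namikawa--Weyl group is not established (many such closures have only type-$A$ minimal degenerations with trivial monodromy, in Kraft--Procesi's terminology); and you yourself flag the type-$A$ inductive step as the ``main obstacle'' without resolving it. The degree-generation criterion sidesteps all of this in a few lines.
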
  

In general, for a conical symplectic variety $Y_0$, the $\Z_{\geq0}$-grading on the coordinate ring $\C[Y_0]$ of $Y_0$ is induced from the $\Cstar$-action. The {\it maximal weight} of $Y_0$ is defined as the maximal degree of minimal generators of $\C[Y_0]$ with respect to the grading. In the case of a nilpotent orbit closure $\overline{\calO}$ in some semisimple Lie algebra $\gerg$, the maximal weight is 1 (moreover, in \cite{Namcharacterize}, {it is shown that} this property characterizes nilpotent orbit closures among all conical symplectic varieties). {Since we consider $\overline{\calO}$ as a conical symplectic variety with $\Cstar$-action of weight 2  under the square of the usual action as above, this means that $\C[\overline{\calO}]$ is generated by degree two elements.} 
{On the other hand, as remarked at Remark \ref{rem:deg=1},} there exists elements of degree two in minimal generators of $\C[Y(A, 0)]$. Thus, if $\overline{\calO}\cong Y(A, 0)$ as conical symplectic varieites, then $\C[Y(A, 0)]$ will be generated by elements of degree two in $\C[Y(A, 0)]$. Since $\C[Y(A, 0)]$ is the quotient ring of $\C[X(A, 0)]$ by elements of degree two (Lemma \ref{ring str lem}), if $\C[Y(A, 0)]$ is generated by elements of degree two, then $\C[X(A, 0)]$ is so. 
By Lemma \ref{ring str lem}, we have $\C[X(A, 0)]=\C[z_1w_1, \ldots ,z_nw_n][f_{\bm{\beta}} \ | \ \bm{\beta}\in\Image B]$, where $f_{\bm{\beta}}:=\prod_{i: \beta_i>0}{z_i^{\beta_i}}\prod_{i: \beta_i<0}{w_i}^{-\beta_i}$. Thus,  elements of degree two in $\C[X(A, 0)]$ are $\{z_1w_1, \ldots ,z_nw_n\}\sqcup(\Image B)_2$, where $(\Image B)_2:=\{\bm{\beta}\in\Image B \ | \ \bm{\beta}=\pm\bm{e_i}\pm\bm{e_j} \ (1\leq i<j\leq n) \}$ (by Remark \ref{rem:deg=1}, $\pm2\bm{e_i}\notin \Image B \ (1\leq i\leq n)$). Hence, what we have to show is the following (cf.\ Example \ref{eg:product}).       
\begin{proposition}
If $\Image B$ is generated by $\bm{u_1}, \ldots ,\bm{u_N}$, where $(\Image B)_2:=\{\bm{u_1}, \ldots ,\bm{u_N}\}$, then $A$ is equivalent to a matrix as the following form: 
\[\begin{pmatrix}
  \smash[b]{\block{\ell_1+1}}&                              &          &                     \\
                              & \smash[b]{\block{\ell_2+1}} &          &  \text{\huge{0}}\\
                              &                              & \ddots &                     \\
   \text{\huge{0}}      &                              &           & \block{\ell_s+1}
  \end{pmatrix},\]
where $\ell_1, \ldots ,\ell_s\geq0$.
In particular, $Y(A, 0)\cong\overline{{\calO}^{\tmin}_{A_{\ell_1}}}\times\cdots\times\overline{{\calO}^{\tmin}_{A_{\ell_s}}}$, where we define $\overline{\calO}_{A_{\ell}}^{\tmin}:=\{pt\}$ if $\ell=0$.
\end{proposition}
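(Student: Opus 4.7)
The plan is to translate the hypothesis into linear relations among the columns $\bm{a_1},\ldots,\bm{a_n}$ of $A$ and then repackage them using the equivalence moves of Definition~\ref{def:fund}. First observe that $\bm{a_i}\neq 0$ for every $i$: otherwise $e_i\in\Image B=\Ker A$, but every $\pm e_i\pm e_j$ has even coordinate sum, hence so does every $\Z$-linear combination, contradicting $\sum_k (e_i)_k=1$. Each generator $\alpha e_i+\beta e_j\in(\Image B)_2$ thus encodes a relation $\alpha\bm{a_i}+\beta\bm{a_j}=0$, i.e.\ $\bm{a_i}=\pm\bm{a_j}$. Declare $i\sim j$ if there exists a chain $i=i_0,i_1,\ldots,i_r=j$ with $\pm e_{i_{t-1}}\pm e_{i_t}\in\Image B$ for each $t$. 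Transitivity follows from the identity $\gamma(\alpha e_i+\beta e_j)-\beta(\gamma e_j+\delta e_k)=\alpha\gamma e_i-\beta\delta e_k$; when $i\neq k$ this is again of type $\pm e_i\pm e_k$, while $i=k$ would yield $2e_i\in\Image B$, contradicting $\bm{a_i}\neq 0$. Let $C_1,\ldots,C_s$ be the resulting equivalence classes.

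Within each $C_k$, form a signed graph whose edges are the generators, labelled by $\epsilon(i,j):=-\alpha\beta\in\{\pm 1\}$ for the generator $\alpha e_i+\beta e_j$. I claim every cycle is balanced, i.e.\ $\prod_t\epsilon_t=+1$. Given a cycle $i_1,i_2,\ldots,i_m,i_1$ with generators $g_t=\alpha_t e_{i_t}+\beta_t e_{i_{t+1}}$, one forms a $\Z$-combination $\sum c_t g_t$ with $c_t\in\{\pm 1\}$ chosen recursively so that the coefficient of every $e_{i_t}$ ($t\geq 2$) cancels. The result is then a multiple of $e_{i_1}$, which must be $0$ because $\bm{a}_{i_1}\neq 0$, and solving the sign recursion shows that this cancellation is exactly the condition $\prod_t\alpha_t\beta_t=(-1)^m$, hence $\prod_t\epsilon_t=1$. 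Balance is equivalent to the existence of $\sigma_i\in\{\pm 1\}$ ($i\in C_k$) with $\sigma_i\sigma_j=\epsilon(i,j)$ on every edge; multiplying the $i$-th column of $A$ by $\sigma_i$, which is an allowed move for $\sim$ by Definition~\ref{def:fund}, converts every generator into the form $e_i-e_j$.

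After this normalization, the generators of $\Image B$ are precisely $\{e_i-e_j : i,j\text{ in the same class}\}$, so $\Image B=\bigoplus_{k=1}^s\{v\in\Z^{C_k}:\sum_i v_i=0\}$. Setting $\ell_k+1:=|C_k|$, the rank identity $\rank\Image B=\sum_k\ell_k=n-d$ forces $s=d$. Since $\bm{a_i}$ depends only on the class of $i$ and the $s=d$ distinct column values $\bm{a^{(1)}},\ldots,\bm{a^{(s)}}$ must span $\Z^d$, they form a $\Z$-basis, so a single $GL_d(\Z)$ row operation sends them to the standard basis. This brings $A$ into the block form of the statement, and Lemma~\ref{lem:product} together with Example~\ref{Amnilpotent} applied to each block yields the isomorphism $Y(A,0)\cong\overline{\calO}^{\tmin}_{A_{\ell_1}}\times\cdots\times\overline{\calO}^{\tmin}_{A_{\ell_s}}$.

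The main obstacle is the balance verification in the middle paragraph: the passage from the pointwise input ``$\bm{a_i}\neq 0$'' to the global sign-consistency on each class is what makes the column sign-flips coherent, and therefore what permits the block decomposition; the rest of the argument is essentially bookkeeping once the signed graph has been trivialized.
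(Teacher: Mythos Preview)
Your argument is correct. The parity observation for $\bm{a_i}\neq 0$, the well-definedness of the edge sign $\epsilon(i,j)$ (since $e_i+e_j$ and $e_i-e_j$ cannot both lie in $\Image B$), the balance verification via the telescoping combination, and the final rank count all go through. One small expository point: your sentence ``Transitivity follows from the identity\ldots'' is slightly misleading, since transitivity of the chain relation is automatic; what you are really proving there is that the one-step relation is already transitive, which you implicitly use later to know the signed graph on each class is connected (in fact complete).

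The paper's own proof takes a rather different route. Instead of analysing the columns of $A$ directly, it works on the $B$ side: it first argues that some $(n-d)$-element subset $\{\bm{u_{i_1}},\ldots,\bm{u_{i_{n-d}}}\}$ of $(\Image B)_2$ already has $\gcd$ of maximal minors equal to $1$ (using that every minor of a matrix with columns of type $\pm e_i\pm e_j$ is $0$ or a power of $2$, together with $\Coker B\cong\Z^d$), so this subset can replace $B$. A direct induction then reduces this new $B$ to the shape $\left(\begin{smallmatrix}I_{n-d}\\ C\end{smallmatrix}\right)$ with each column of $C$ equal to some $\pm e_i$, from which the block form of $A$ is read off immediately. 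Your approach is more structural---the equivalence classes $C_k$ \emph{are} the blocks, and the balanced-signed-graph step explains conceptually why the sign ambiguities can be removed coherently---whereas the paper's approach is shorter and more purely matrix-theoretic, trading the cycle/balance analysis for a minors-and-gcd argument. Both reach the same normal form, and your version has the advantage of making transparent why $s=d$ and why the distinct column values form a $\Z$-basis.
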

\begin{proof}
First, we want to show that there exists $1\leq i_1<\cdots <i_{n-d}\leq N$ such that $B\sim U_0$, where $U_0:=[\bm{u_{i_1}}, \ldots ,\bm{u_{i_{n-d}}}]$. Since $\Coker B=\Coker U\cong\Z^d$ is a free abelian group, so we have 
\[\gcd(\text{$(n-d)\times(n-d)$-minors of $B$})=\gcd(\text{$(n-d)\times(n-d)$-minors of $U$})=1.\]
Thus, we only have to show that there exists $1\leq i_1<\cdots <i_{n-d}\leq N$ such that $\gcd(\text{$(n-d)\times(n-d)$-minors of $U_0$})=1$. 

In general, by indcution, one can show that any minors of the matrix whose all column vectors are in the form of $\pm\bm{e_i}\pm\bm{e_j}$ is 0 or $2^\ell$ \ $(\ell\geq0)$. In particular, since $\gcd(\text{$(n-d)\times(n-d)$-minors of $U$})=1$, this implies that there exists $1\leq i_1<\cdots <i_{n-d}\leq N$ such that a minor of $U_0$ is 1. Thus, we have $\gcd($\text{$(n-d)\times(n-d)$-minors of $U_0$}$)=1$. 

Since each column vector of $U_0$ is in the form of $\pm\bm{e_i}\pm\bm{e_j}$, by the induction on the size of matrices, one can easily show that $U_0$ can be transformed to the following form: 
\[U_0\sim\left(\begin{array}{cc}
\vncbig{I_{n-d}}\\
\cline{1-2}
&\\
\vcbig{C}
\end{array}\right)=:B',\] 
where each column vector of $C$ is in the form of $\pm\bm{e_i}$. Then for this $B'$, one can take $A'$ as the following by the same reason as Remark \ref{rem:fund}. 
\[A'=\left(\begin{array}{cc|cc}
\vncbig{-C}&\vncbig{I_d}
\end{array}\right).
\]
Note that each column vector of $C$ is $\pm\bm{e_i}$ for some $1\leq i \leq d$. Thus, if we interchange column vectors and multiply by $\pm1$, then we will obtain
\[A'\sim A'':=\begin{pmatrix}
  \smash[b]{\block{\ell_1+1}}&                              &          &                     \\
                              & \smash[b]{\block{\ell_2+1}} &          &  \text{\huge{0}}\\
                              &                              & \ddots &                     \\
   \text{\huge{0}}      &                              &           & \block{\ell_s+1}
  \end{pmatrix},\]
where $\ell_1, \ldots ,\ell_s\geq0$. 
This completes the proof.  
\end{proof}
\section{Counting projective crepant resolutions of affine hypertoric varieties}
In this section, we will show that the number of all (projective) crepant resolutions of an affine hypertoric variety $Y(A, 0)$ can be computed from the hyperplane arrangement $\calH_A$ generated by some column vectors $\bm{a_i}$ of $A$.
In application, we will compute the number of crepant resolutions of $\overline{\calO^{\tmin}}(\{\ell_1, \ell_2, \ell_3\})$ explicitly in the case of $\ell_3=1, 2$. 

In general, the number of distinct projective crepant resolutions of conical symplectic varieties (or more generally, rational Gorenstein singularities) is equal to the number of ample cones inside the movable cone (cf.\ \cite{Ya}). Furthermore, in the case of conical symplectic varieties, the movable cone is a fundamental domain with respect to the Namikawa--Weyl group action on $H^2(\tilde{Y}, \R)=\Pic_\R(\tilde{Y})$ (\cite[Proposition 2.17]{BPW1}). In particular, the following holds.  
\begin{proposition}Assume a conical symplectic variety $Y$ admits a projective crepant resolution $\pi: \tilde{Y}\to Y$. Then the number of all distinct projective crepant resolutions of $Y$ is given by the following: 
\[\fr<\#(\text{the chambers of $\calH_Y$})/|W|>,\]
where $\calH_Y\subset H^2(\tilde{Y}, \R)=\Pic_\R(\pi)$ is the associated hyperplane arrangement whose each chamber is an ample cone of a projective crepant resolution of $Y$, and $W$ is the Namikawa--Weyl group for $Y$. 
\end{proposition}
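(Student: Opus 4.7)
The plan is to combine the two facts already recalled in the paragraph preceding the statement: Yamagishi's result that projective crepant resolutions of a conical symplectic variety $Y$ correspond bijectively to ample (i.e.\ movable) chambers inside the movable cone $\mathrm{Mov}(\tilde Y)\subset H^2(\tilde Y,\R)$, and Braden--Proudfoot--Webster's result that $\mathrm{Mov}(\tilde Y)$ is a fundamental domain for the natural action of the Namikawa--Weyl group $W$ on $H^2(\tilde Y,\R)$.

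First I would recall the setup: $\calH_Y$ is the hyperplane arrangement whose walls are the codimension-one faces common to the closures of the ample cones of the various projective crepant resolutions $\tilde Y'\to Y$ (identified with subsets of $H^2(\tilde Y,\R)=\Pic_\R(\tilde Y)$ via the birational identifications $H^2(\tilde Y',\R)\cong H^2(\tilde Y,\R)$). By construction the chambers of $\calH_Y$ refine $\mathrm{Mov}(\tilde Y)$, and by the Ya-result every chamber inside $\mathrm{Mov}(\tilde Y)$ is the ample cone of exactly one projective crepant resolution. Hence
\[
\#\{\text{projective crepant resolutions of }Y\}=\#\{\text{chambers of }\calH_Y\text{ contained in }\mathrm{Mov}(\tilde Y)\}.
\]

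Next I would use the $W$-equivariance: $W$ acts linearly on $H^2(\tilde Y,\R)$ and permutes the chambers of $\calH_Y$ (because $W$ is generated by reflections in some of the walls of $\calH_Y$, and the collection of ample cones is $W$-invariant by its intrinsic description). Since $\mathrm{Mov}(\tilde Y)$ is a fundamental domain for this action, the $W$-orbit of the set of chambers inside $\mathrm{Mov}(\tilde Y)$ is exactly the set of all chambers of $\calH_Y$, and the action is free on the interiors of chambers (a reflection fixes only points on its mirror). Therefore
\[
\#\{\text{chambers of }\calH_Y\}=|W|\cdot\#\{\text{chambers of }\calH_Y\text{ contained in }\mathrm{Mov}(\tilde Y)\},
\]
which together with the previous displayed equation yields the desired formula $\frac{\#(\text{chambers of }\calH_Y)}{|W|}$.

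The only genuine subtlety — and the step I would check most carefully — is the freeness of the $W$-action on chambers, i.e.\ the claim that no nontrivial element of $W$ fixes a chamber setwise. This is standard for Weyl-type actions (the fixed locus of any nontrivial reflection lies on a union of mirrors, which are walls of $\calH_Y$), but one must verify that every hyperplane fixed by a reflection in $W$ is actually one of the walls of $\calH_Y$, which is precisely the content of BPW1's identification of $\mathrm{Mov}(\tilde Y)$ with a fundamental Weyl chamber. Everything else is a clean bookkeeping argument.
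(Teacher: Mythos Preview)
Your proposal is correct and follows exactly the approach the paper takes: the paper does not give a detailed proof of this proposition but simply records it as a consequence of the two cited facts in the preceding paragraph (\cite{Ya} for the bijection between resolutions and ample chambers inside $\mathrm{Mov}(\tilde Y)$, and \cite[Proposition~2.17]{BPW1} for $\mathrm{Mov}(\tilde Y)$ being a fundamental domain for $W$). Your write-up fills in precisely the bookkeeping the paper leaves implicit, and your flagged subtlety about freeness of the $W$-action on chambers is indeed handled by the BPW result, which identifies the walls of the movable cone with the reflection hyperplanes of $W$.
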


For an affine hypertoric variety $Y(A, 0)$, we considered in section 2 the hyperplane arrangement $\calH_A$ in $\R^d$ defined by the following. 
\vspace{-3pt}

\[\calH_A:=\{H {\subset \R^d}\ | \ H \ \text{is a subspace generated by some $\bm{a_j}$'s and $\codim H=1$}\}.\]

By \cite[Theorem 6.10]{Kovariation}, the K\"{a}hler cone of $\pi: Y(A, \alpha)\to Y(A, 0)$ for each generic $\alpha\in\Z^d$ is equal to the chamber $C$ of \ $\calH_A$ which includes $\alpha$ through the Kirwan map $\kappa_2: \R^d\overset{\sim}{\to} H^2(Y(A, \alpha), \R)=\Pic_\R(\pi)$. Furthermore, as remarked at \cite[Lemma 4.14]{BK}, each line bundle in $C$ is $\pi$-ample. These imply that $C$ is the $\pi$-ample cone. 
Consequently, we have $\calH_{Y(A, 0)}=\calH_A$.
Using this fact and the description of the Namikawa--Weyl group of $Y(A, 0)$ (cf.\ Theorem \ref{thm:Mainthm}), we can compute the distinct projective crepant resolutions of $Y(A, 0)$ as the following. 
\begin{corollary}\label{cor:counting}
For an affine hypertoric variety $Y(A, 0)$ associated to a unimodular matrix $A$, Then the number of its all distinct projective crepant resolutions is given by the following: 
\[\fr<r(\calH_A)/|W_B|>,\]
where $r(\calH_A)$ is the number of chambers of $\calH_A$.
\end{corollary}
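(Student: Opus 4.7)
The plan is to simply combine the three ingredients that have already been assembled in the paper, so the proof is essentially a bookkeeping exercise. First, I would invoke the general principle stated in the Proposition just above the corollary: for any conical symplectic variety $Y$ admitting a projective crepant resolution, the number of projective crepant resolutions equals $\#(\text{chambers of }\calH_Y)/|W|$, where $\calH_Y$ is the hyperplane arrangement inside $H^2(\tilde Y,\R)=\Pic_\R(\pi)$ whose chambers are the $\pi$-ample cones and $W$ is the Namikawa--Weyl group.

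Next, I would apply this with $Y=Y(A,0)$ and $\tilde Y=Y(A,\alpha)$ for a generic $\alpha\in\Z^d$, whose existence is ensured by Theorem \ref{thm:hyperfund}. The two identifications to make are: (a) $\calH_{Y(A,0)}=\calH_A$, via the Kirwan map isomorphism $\kappa_2:\R^d\xrightarrow{\sim}H^2(Y(A,\alpha),\R)=\Pic_\R(\pi)$ (Konno's theorem, already used in Proposition \ref{prop:UPDforsmooth}), which by \cite{Kovariation} transports each GIT chamber $C\subseteq\R^d$ of $\calH_A$ to the $\pi$-ample cone of $\pi:Y(A,\alpha)\to Y(A,0)$ for any $\alpha\in C$; and (b) $W=W_B$, which is precisely the content of Theorem \ref{thm:Mainthm}.

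Combining these two identifications with the general counting Proposition gives the desired formula
\[
\#\{\text{projective crepant resolutions of }Y(A,0)\}=\frac{r(\calH_A)}{|W_B|},
\]
which completes the proof. There is no real obstacle here, since all the serious work has been done earlier: the heavy lifting in (a) is Konno's isomorphism together with the VGIT-theoretic identification of ample cones with GIT chambers, and the heavy lifting in (b) is the determination of the Namikawa--Weyl group carried out in Section 3. The only minor point to be careful about is to note that every projective crepant resolution of $Y(A,0)$ is indeed of the form $\pi:Y(A,\alpha)\to Y(A,0)$ for some generic $\alpha$, so that varying the GIT parameter genuinely exhausts all such resolutions; this is remarked upon in the introduction and follows from the bijection between crepant resolutions and ample chambers inside the movable cone.
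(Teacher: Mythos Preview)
Your proposal is correct and follows essentially the same approach as the paper: the paper's argument for this corollary is precisely the discussion immediately preceding it, namely, invoking the general counting Proposition, identifying $\calH_{Y(A,0)}=\calH_A$ via the Kirwan map and \cite[Theorem 6.10]{Kovariation} (together with \cite[Lemma 4.14]{BK} for $\pi$-ampleness), and identifying $W=W_B$ via Theorem \ref{thm:Mainthm}. Your final remark that every projective crepant resolution arises as some $Y(A,\alpha)$ is a consequence rather than an input---once $\calH_{Y(A,0)}=\calH_A$ is established, the chamber count already accounts for all resolutions---but noting it does no harm.
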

\begin{remark}Although we consider only projective crepant resolutions of $Y(A, 0)$ in this paper, it is known that some affine hypertoric varieties admit {\it non-projective} proper crepant resolutions (\cite{AP}). In \cite[Example 6.10]{AP}, an example of non-projective crepant resolutions of $\overline{\calO^{\tmin}}(\{3,3,3\})$ is described in terms of zonotopal tilings.  
\end{remark}
In general, to compute the number of chambers of a hyperplane arrangement $\calH$, we consider a more refined invariant, the characteristic polynomial $\chi_{\calH}(t)$ defined as below. 

Recall some notations on hyperplane arrangements. Given an arrangement $\calH$ in $\R^d$, let $L(\calH)$ be the set of all nonempty intersections of hyperplanes in $\calH\subset\R^d$. We define the partial order $x\leq y$ in $L(\calH)$ if $x\supseteq y$. We call $L(\calH)$ the {\it intersection poset of $\calH$}. 

For $L(\calH)$ (in general for any finite poset with the least element), we can define the {\it M\"{o}bius fuction} $\mu: L(\calH) \to \Z$ as
\[\mu(\R^d)=1 \ \ \text{and} \ \ \mu(x)=-\sum_{y<x}{\mu(y)}.\]
\begin{definition}
The {\it characteristic polynomial} $\chi_{\calH}(t)$ of the hyperplane arrangement $\calH$ is defined by
\[\chi_{\calH}(t):=\sum_{x\in L(\calH)}{\mu(x)t^{\dim(x)}},\]
where $\dim(x)$ is the dimension of $x$ as an affine subspace of $\R^d$.
\end{definition}
One of the important properties of the characteristic polynomial is the following.   
\begin{theorem}{\rm (\cite{Zas})}\label{thm:Zas}\\
For any hyperplane arrangement $\calH$ in $\R^d$, we have
\[\#\{\text{chamber of} \ \calH\}=(-1)^d\chi_{\calH}(-1).\]
\end{theorem}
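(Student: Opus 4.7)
The plan is to prove Zaslavsky's identity $r(\calH) = (-1)^d\chi_{\calH}(-1)$ by induction on $|\calH|$, using a deletion--restriction recursion that holds simultaneously for the chamber count and for the characteristic polynomial. The base case is the empty arrangement in $\R^d$, for which $L(\calH) = \{\R^d\}$, $\mu(\R^d) = 1$, hence $\chi_{\calH}(t) = t^d$, and there is a single chamber (the whole space), so both sides equal $1$.

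For the inductive step, I would pick any $H_0 \in \calH$ and form two auxiliary arrangements: the deletion $\calH' := \calH \setminus \{H_0\}$ living in $\R^d$, and the restriction $\calH'' := \{H \cap H_0 : H \in \calH',\ H \cap H_0 \neq \emptyset \text{ and } \neq H_0\}$ living inside $H_0 \cong \R^{d-1}$. Both have strictly fewer hyperplanes (in their ambient space), so the inductive hypothesis applies. The two parallel recursions I need are:

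\textbf{(a) Chamber recursion:} $r(\calH) = r(\calH') + r(\calH'')$. This is a purely geometric observation: each chamber $C$ of $\calH'$ is either disjoint from $H_0$ (in which case it remains a single chamber of $\calH$) or meets $H_0$ transversally and is cut into exactly two chambers of $\calH$. Chambers of the second type biject with chambers of $\calH''$ via $C \mapsto C \cap H_0$.

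\textbf{(b) Polynomial recursion:} $\chi_{\calH}(t) = \chi_{\calH'}(t) - \chi_{\calH''}(t)$. This is the algebraic heart of the argument. One splits $L(\calH)$ into flats already present in $L(\calH')$ and new flats that arise only because of $H_0$; the new flats are precisely the proper intersections $x \cap H_0$ lying strictly below $H_0$, which match $L(\calH'')$. Comparing M\"obius values on the three posets, one can reorganize the sum defining $\chi_{\calH}(t)$ into the claimed difference; the cleanest implementation is Weisner's theorem applied to the atom $H_0$ in $L(\calH)$.

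Granting (a) and (b), the induction closes immediately:
\[
(-1)^d\chi_{\calH}(-1) = (-1)^d\chi_{\calH'}(-1) - (-1)^d\chi_{\calH''}(-1) = r(\calH') + (-1)^{d-1}\chi_{\calH''}(-1) = r(\calH') + r(\calH'') = r(\calH),
\]
where the first inductive hypothesis is applied to $\calH'$ in $\R^d$ and the second to $\calH''$ in $\R^{d-1}$. The main obstacle is step (b): the chamber recursion (a) is essentially visual, and the base case is trivial, but the M\"obius-theoretic identity requires careful bookkeeping of how intersection posets behave under removing one hyperplane versus restricting to it. Once one has committed to the Weisner-theorem viewpoint (or, equivalently, to a direct order-theoretic comparison of $L(\calH)$, $L(\calH')$, and $L(\calH'')$), the identity falls out, and the remainder of the proof is the one-line computation above.
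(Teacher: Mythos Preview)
The paper does not give its own proof of this statement; it simply cites Zaslavsky's original result \cite{Zas} and uses it as a black box. Your deletion--restriction argument is the standard textbook proof of Zaslavsky's theorem and is correct as outlined, so there is nothing to compare against on the paper's side.
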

From here, we mainly consider hyperplane arrangements which are defined over $\Z$ (i.e., all the coefficients of all hyperplanes in $\calH$ are integers). For such hyperplane arrangements, we can reduce the coefficients of each hyperplane $H\in\calH$ modulo $p$ to get a hyperplane $\overline{H} \subset \F_p^d$, where $p$ is a prime number. 
This reduction is enable us to compute the charasteristic polynomial of $\calH$ as the following. 
\begin{theorem}{\rm (The finite field method \cite{Ath})}\\
 Let $\calH\subset\R^d$ be a hyperplane arrangement defined over $\Z$. If $p$ is a sufficient large prime number, then
\[\chi_{\calH}(p)=\left|\F_p^d\setminus\bigcup_{H\in\calH}{\overline{H}}\right|.\]
\end{theorem}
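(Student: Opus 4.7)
The plan is to prove both sides count the same thing by Möbius inversion on the intersection poset, once we know the poset is unchanged by the mod-$p$ reduction. I would proceed in two main steps.

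First, I would show that for $p$ sufficiently large, reduction mod $p$ induces an isomorphism of ranked posets $L(\calH) \cong L(\calH_p)$, where $\calH_p := \{\overline{H} \mid H \in \calH\}$ and ranks come from (co)dimension. Concretely, any element of $L(\calH)$ is recorded by a subset $S \subseteq \calH$ together with the rank of the associated linear system. Since there are only finitely many such subsets, and for each one the (finitely many) nonzero maximal minors of the coefficient matrix are nonzero integers, one avoids a finite set of ``bad primes''; for every $p$ outside this set, the rank of the system and hence the dimension of the flat it cuts out is unchanged under $\Z \to \F_p$. This also ensures that $\bigcap_{H\in S} H = \emptyset$ in $\R^d$ if and only if the corresponding reduced intersection is empty in $\F_p^d$, giving the required poset isomorphism. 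In particular $\chi_{\calH_p}(t)=\chi_{\calH}(t)$.

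Second, with $L(\calH_p)$ identified with $L(\calH)$, I would count points of the complement by Möbius inversion. For each $X \in L(\calH_p)$ let
\[
f(X) := \Bigl| X \setminus \bigcup_{\overline{H} \not\supseteq X}\overline{H} \Bigr|.
\]
Every point of $X$ lies on a unique smallest flat $Y \supseteq X$ (namely the intersection of all $\overline{H}$ passing through that point), so partitioning $X$ according to this flat gives
\[
|X| \;=\; \sum_{Y \geq X} f(Y).
\]
Möbius inversion on the poset $L(\calH_p)$ then yields
\[
f(X) \;=\; \sum_{Y \geq X} \mu(X,Y)\,|Y|.
\]
Taking $X = \F_p^d$ (the bottom element), the left-hand side is $|\F_p^d \setminus \bigcup_{H\in\calH}\overline{H}|$, while on the right one has $|Y| = p^{\dim Y}$, giving
\[
\Bigl|\F_p^d \setminus \bigcup_{H\in\calH} \overline{H}\Bigr| \;=\; \sum_{Y \in L(\calH_p)} \mu(Y)\, p^{\dim Y} \;=\; \chi_{\calH_p}(p) \;=\; \chi_{\calH}(p).
\]

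The routine part is the Möbius inversion, which is essentially the same combinatorial identity that underlies Zaslavsky's theorem. The main obstacle, as usual in this ``finite field method'', is the first step: making precise the quantification ``$p$ sufficiently large'' and verifying that \emph{both} the combinatorial structure of $L(\calH)$ \emph{and} the dimensions of its elements survive reduction mod $p$. This is really a statement about ranks of finitely many integer matrices being preserved modulo primes that do not divide any of their nonzero maximal minors; once that linear-algebra bookkeeping is in hand, the rest is a standard application of poset Möbius inversion.
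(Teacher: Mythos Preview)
Your argument is correct and is essentially the standard proof of Athanasiadis's theorem: first control the bad primes so that the intersection poset and all flat dimensions survive reduction mod $p$, then apply M\"obius inversion to count complement points. Note, however, that the paper does not give its own proof of this statement; it merely cites \cite{Ath} and uses the result as a tool, so there is no ``paper's proof'' to compare against. Your write-up matches the argument one finds in the literature.
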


We now consider the hyperplane arrangement associated to the hypertoric variety $\overline{\calO^{\tmin}}(\{\ell_1, \ell_2, \ell_3\})$. 
At first, we remark that $\overline{\calO^{\tmin}}(\{\ell_1, \ell_2, \ell_3\})$ is the hypertoric variety defined from the following matrices $A=A_{\ell_1, \ell_2, \ell_3}$ and $B=B_{\ell_1, \ell_2, \ell_3}$ (cf.\ see the proof of Theorem \ref{thm:4dimension}).
\[
B=\begin{array}{rccll}
	\ldelim({13}{4pt}[] &1&0&\rdelim){13}{4pt}[]& \\
	&0&1&&\\
\cline{2-3}
	&1&0&&\rdelim\}{3}{10pt}[$\ell_1-1$]\\
	&\vdots&\vdots& & \\
	&1&0& & \\
	&0&1& & \rdelim\}{3}{10pt}[$\ell_2-1$,] \\
	&\vdots&\vdots& & \\
	&0&1& & \\
	&-1 &-1 & & \rdelim\}{3}{10pt}[$\ell_3$]\\
	&\vdots&\vdots& & \\
	&-1&-1&& \\
	\end{array}  \ \ \ \ \ \ \ \ \ \ 
A=\left(\begin{array}{cc|ccc|ccc|ccc}
-1&0&&&&&&&&&\\
\vdots&\vdots&\vcbig{I_{\ell_1-1}}&&\vcbig{0}&&\vcbig{0}&\\
-1&0&&&&&&&&&\\
\cline{1-11}
0&-1&&&&&&&&&\\
\vdots&\vdots&\vcbig{0}&&\vcbig{I_{\ell_2-1}}&&\vcbig{0}&\\
0&-1&&&&&&&&&\\
\cline{1-11}
1&1&&&&&&&&&\\
\vdots&\vdots&\vcbig{0}&&\vcbig{0}&&\vcbig{I_{\ell_3}}&\\
1&1&&&&&&&&&
\end{array}\right).
\]
To describe the associated hyperplane arrangement $\calH_{A_{\ell_1, \ell_2, \ell_3}}\subset\R^{\ell_1+\ell_2+\ell_3-2}$, we remark a generalization of the Gale duality (cf.\ \cite[Lemma 14.3.1]{CLS}). 
\begin{lemma}{\rm (A generalization of the Gale duality)}\label{lem:Gale}\\
Assume $A=[\bm{a_1}, \ldots ,\bm{a_n}] \in M_{d \times n}(\R)$ and $B=[\bm{b_1}, \ldots ,\bm{b_n}]^T \in  M_{n \times (n-d)}(\R)$ satisfy the exact sequence 
\[\begin{tikzcd}0\ar[r]&\R^{n-d}\ar[r, "B"]&\R^n\ar[r, "A"]&\R^d\ar[r]&0\end{tikzcd}.\]
For a partition $\{1,2, \ldots ,n\}=I\sqcup J$ and $r \in \Z_{\geq 0}$ satisfying $0 \leq |I|-r \leq n-d, \ 0 \leq r \leq d$, we have 
\[
\dim_\R\Span_\R(\bm{b_i} \ | \ i \in I)=|I|-r \Leftrightarrow \dim_\R\Span_\R(\bm{a_j} \ | \ j \in J)=d-r
.\] 
\end{lemma}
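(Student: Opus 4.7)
The plan is to reduce both sides to statements about the ranks of restricted matrices and then extract a single rank identity from the exact sequence. For a subset $I\subseteq[n]$, let $B_I\in\Mat_{|I|\times(n-d)}(\R)$ denote the submatrix of $B$ consisting of the rows indexed by $I$; then $\dim_\R\Span_\R(\bm{b_i}\mid i\in I)=\rank B_I$. Similarly, for $J\subseteq[n]$, let $A_J\in\Mat_{d\times|J|}(\R)$ denote the submatrix of $A$ consisting of the columns indexed by $J$; then $\dim_\R\Span_\R(\bm{a_j}\mid j\in J)=\rank A_J$. Hence the claimed equivalence is equivalent to the single identity
\[
\rank A_J-\rank B_I=d-|I|,
\]
independently of $r$ (both sides of the stated equivalence become equal to $d-r$ exactly when this identity holds).

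To prove this identity, I would view $B_I$ as the composition $\pi_I\circ B:\R^{n-d}\to\R^I$, where $\pi_I:\R^n\to\R^I$ is the coordinate projection, and identify $\R^J\hookrightarrow\R^n$ with the subspace of vectors supported in $J$. Since $B$ is injective, the kernel of $B_I$ is carried by $B$ isomorphically onto $\Image(B)\cap\R^J$, which by exactness equals $\Ker(A)\cap\R^J$. A vector in $\R^J\subseteq\R^n$ lies in $\Ker(A)$ precisely when its $J$-coordinate vector is killed by $A_J$, so the projection $\pi_J$ identifies $\Ker(A)\cap\R^J$ with $\Ker(A_J)$. Combining these two isomorphisms gives $\dim\Ker(B_I)=\dim\Ker(A_J)$. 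Applying rank--nullity to $B_I:\R^{n-d}\to\R^I$ and to $A_J:\R^J\to\R^d$,
\[
(n-d)-\rank B_I=\dim\Ker(B_I)=\dim\Ker(A_J)=|J|-\rank A_J,
\]
and since $|J|=n-|I|$ this rearranges to the desired identity $\rank A_J-\rank B_I=d-|I|$.

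There is no serious obstacle here; the only care needed is bookkeeping with the inclusion/projection conventions for $\iota_J$ and $\pi_I$ so that the chain of isomorphisms $\Ker(B_I)\cong\Image(B)\cap\R^J=\Ker(A)\cap\R^J\cong\Ker(A_J)$ is applied cleanly. One may finally want to record that the inequalities $0\leq|I|-r\leq n-d$ and $0\leq r\leq d$ in the hypothesis are precisely the consistency conditions that ensure both $\rank B_I=|I|-r\in[0,n-d]$ and $\rank A_J=d-r\in[0,d]$ are a priori possible, so the stated equivalence is non-vacuous exactly in this range.
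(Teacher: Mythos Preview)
Your argument is correct and uses the same core observation as the paper: both hinge on the identification $\Ker(A|_{\R^J})=\Image(B)\cap\R^J$ and then unwind this in terms of the span of the $\bm{b_i}$ for $i\in I$. The paper computes $\dim(\Image B\cap\R^J)=\dim\{\bm{h}\in\R^{n-d}\mid\langle\bm{b_i},\bm{h}\rangle=0,\ i\in I\}=n-d-\dim\Span_\R(\bm{b_i}\mid i\in I)$, which is precisely your $\dim\Ker(B_I)$ via rank--nullity.

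The one genuine difference is packaging. The paper proves only the forward implication this way and then obtains the converse by applying the same argument to the dual exact sequence $0\to(\R^d)^*\xrightarrow{A^T}(\R^n)^*\xrightarrow{B^T}(\R^{n-d})^*\to0$. You instead extract from the computation the unconditional identity $\rank A_J-\rank B_I=d-|I|$, from which both implications are immediate; this is slightly more economical and makes the role of the hypotheses on $r$ transparent (they merely guarantee the ranks lie in the admissible ranges, as you note).
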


\begin{proof}
$(\Rightarrow)$ Note \ $\Span_\R(\bm{a_j} \ | \ j \in J)=\Image(A|_{\R^J})$, where $\R^J:=\{(v_1, \ldots ,v_n)^T \in \R^n \ | \ v_i=0 \ (i \notin J) \}$.
Then, by the assumption, 
\begin{align*}
\dim\Ker(A|_{\R^J})=\dim(\Image B\cap \R^J)&=\dim\{\bm{h} \in \R^{n-d} \ | \ \langle \bm{b_i}, \bm{h}\rangle_{\R^{n-d}}=0 \ (i \in I)\}\\
&=n-d-\dim \Span_\R(\bm{b_i} \ | \ i \in I)\\
&=|J|-(d-r),
\end{align*}
where $\langle-, -\rangle:\R^{n-d}\times(\R^{n-d})^*\to\R$ is the standard pairing.
 
$(\Leftarrow)$ Set $q:=|J|-d+r$. Then, the following is clear: 
\[0 \leq |I|-r \leq n-d \ \text{and} \ 0 \leq r \leq d \ \ \ \Leftrightarrow \ \ \ 0 \leq |J|-q \leq d\ \text{and}\ 0 \leq q \leq n-d.
\]
Thus, one can show the claim by applying the above argument to the dual exact sequence $\begin{tikzcd}0\ar[r]&(\R^{d})^*\ar[r, "A^T"]&(\R^n)^*\ar[r, "B^T"]&(\R^{n-d})^*\ar[r]&0\end{tikzcd}$.
\end{proof}

Using this lemma in the case of $r=1$, one can easily describe $\calH_{A_{\ell_1, \ell_2, \ell_3}}\subset\R^{\ell_1+\ell_2+\ell_3-2}$ as the following, where we take $(x'_1, \ldots ,x'_{\ell_1-1}, y'_1, \ldots ,y'_{\ell_2-1}, z'_1, \ldots ,z'_{\ell_3})$ as the coordinates of $\R^{\ell_1+\ell_2+\ell_3-2}$.  
\[\calH_{A_{\ell_1, \ell_2, \ell_3}}=\Set{\begin{array}{lll}
H^x_i : &x'_i=0&(1\leq i\leq \ell_1-1)\\
H^y_j : &y'_j=0&(1\leq j\leq \ell_2-1)\\
H^z_k : &z'_k=0&(1\leq k\leq \ell_3)\\
H^{xz}_{ik} : &x'_i+z'_k=0&(1\leq i\leq \ell_1-1, \ 1\leq k\leq \ell_3)\\
H^{yz}_{jk} : &y'_j+z'_k=0&(1\leq j\leq \ell_2-1, \ 1\leq k\leq \ell_3)\\
H_{ijk} : &x'_i+y'_j+z'_k=0&(1\leq i\leq \ell_1-1, \ 1\leq j\leq \ell_2-1, \ 1\leq k\leq \ell_3)\\
H^x_{i_1,i_2} : &x'_{i_1}-x'_{i_2}=0&(1\leq i_1< i_2\leq \ell_1-1)\\
H^y_{j_1,j_2} : &y'_{j_1}-y'_{j_2}=0&(1\leq j_1< j_2\leq \ell_2-1)\\
H^z_{k_1,k_2} : &z'_{k_1}-z'_{k_2}=0&(1\leq k_1< k_2\leq \ell_3)
\end{array}}.\]
Computing the characteristic polynomial of $\calH_{A_{\ell_1, \ell_2, \ell_3}}$ is reduced to compute the one of another well-known hyperplane arrangement as the following. 
\begin{lemma}Define the hyperplane arrangement $\calA_{\ell_1, \ell_2, \ell_3}\subset\R^{\ell_1+\ell_2+\ell_3}$ as the following, where we take $(x_1, \ldots ,x_{\ell_1}, y_1, \ldots ,y_{\ell_2}, z_1, \ldots ,z_{\ell_3})$ as the coordinate of $\R^{\ell_1+\ell_2+\ell_3}$.
\[\calA_{\ell_1, \ell_2, \ell_3}:=\Set{\begin{array}{lll}
H_{ijk} : &x_i+y_j+z_k=0 &(1\leq i\leq \ell_1, 1\leq j\leq \ell_2, 1\leq k\leq \ell_3)\\
H^{x}_{i_1, i_2} : &x_{i_1}-x_{i_2}=0&(1\leq i_1<i_2\leq \ell_1)\\
H^{y}_{j_1, j_2} : &y_{j_1}-y_{j_2}=0&(1\leq j_1<j_2\leq \ell_2)\\
H^{z}_{k_1, k_2} : &z_{k_1}-z_{k_2}=0&(1\leq k_1<k_2\leq \ell_3)
\end{array}}.\]
Then, 
\[\chi_{\calA_{\ell_1, \ell_2, \ell_3}}(t)=t^2\chi_{\calH_{A_{\ell_1, \ell_2, \ell_3}}}(t).\]
In particular, $r(\calA_{\ell_1, \ell_2, \ell_3})=r(\calH_{A_{\ell_1, \ell_2, \ell_3}})$.
\end{lemma}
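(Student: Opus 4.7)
The plan is to exhibit a surjective linear map $\Phi: \R^{\ell_1+\ell_2+\ell_3} \to \R^{\ell_1+\ell_2+\ell_3-2}$ whose kernel $K$ is $2$-dimensional and lies in every hyperplane of $\calA_{\ell_1,\ell_2,\ell_3}$, and which identifies $\calA_{\ell_1,\ell_2,\ell_3}$ with the pullback $\Phi^{-1}(\calH_{A_{\ell_1,\ell_2,\ell_3}})$. Once such a $\Phi$ is produced, a standard functoriality argument will immediately yield the claimed identity of characteristic polynomials, after which Theorem \ref{thm:Zas} supplies the equality of chamber numbers.

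First, I would define $\Phi$ by the substitution
\[
x'_i := x_i - x_{\ell_1}\ (1\le i\le \ell_1{-}1),\quad y'_j := y_j - y_{\ell_2}\ (1\le j\le \ell_2{-}1),\quad z'_k := z_k + x_{\ell_1}+y_{\ell_2}\ (1\le k\le \ell_3),
\]
and check directly that $\Phi$ is surjective with kernel $K=\{\,x_1{=}\cdots{=}x_{\ell_1}{=}a,\ y_1{=}\cdots{=}y_{\ell_2}{=}b,\ z_1{=}\cdots{=}z_{\ell_3}{=}-(a{+}b)\,\}$ of dimension $2$. Next I would verify, one family at a time, that each defining equation of $\calH_{A_{\ell_1,\ell_2,\ell_3}}$ pulls back to an equation of $\calA_{\ell_1,\ell_2,\ell_3}$, and that this gives a bijection between the two hyperplane sets. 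For example $x'_i=0$ corresponds to $H^{x}_{i,\ell_1}$, the equation $z'_k=0$ to $H_{\ell_1,\ell_2,k}$, $x'_i+z'_k=0$ to $H_{i,\ell_2,k}$, $y'_j+z'_k=0$ to $H_{\ell_1,j,k}$, and $x'_i+y'_j+z'_k=0$ to $H_{ijk}$ with $i<\ell_1, j<\ell_2$; the three ``difference'' families match tautologically. A count shows these exhaust $\calA_{\ell_1,\ell_2,\ell_3}$ without repetition (for instance the $H_{ijk}$-type hyperplanes split as $\ell_3 + (\ell_1{-}1)\ell_3 + (\ell_2{-}1)\ell_3 + (\ell_1{-}1)(\ell_2{-}1)\ell_3 = \ell_1\ell_2\ell_3$, and the $H^{x}$-type as $(\ell_1{-}1)+\binom{\ell_1-1}{2}=\binom{\ell_1}{2}$, etc.).

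Plugging the kernel vectors into each equation of $\calA_{\ell_1,\ell_2,\ell_3}$ shows $K\subseteq H'$ for every $H'\in\calA_{\ell_1,\ell_2,\ell_3}$. Consequently $\Phi$ restricts to an order-preserving bijection $L(\calA_{\ell_1,\ell_2,\ell_3})\xrightarrow{\sim} L(\calH_{A_{\ell_1,\ell_2,\ell_3}})$, $X\mapsto\Phi(X)$, with inverse $Y\mapsto\Phi^{-1}(Y)$, and every fibre has dimension exactly $\dim\Phi(X)+2$. The M\"obius functions then agree under this isomorphism, so
\[
\chi_{\calA_{\ell_1,\ell_2,\ell_3}}(t)=\sum_{X\in L(\calA)}\mu(X)\,t^{\dim X}=t^{2}\sum_{Y\in L(\calH)}\mu(Y)\,t^{\dim Y}=t^{2}\chi_{\calH_{A_{\ell_1,\ell_2,\ell_3}}}(t).
\]
Applying Theorem \ref{thm:Zas} to both sides and comparing, the factor $(-1)^{2}=1$ in front of $t^{2}$ gives $r(\calA_{\ell_1,\ell_2,\ell_3})=r(\calH_{A_{\ell_1,\ell_2,\ell_3}})$.

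The only delicate point is selecting the correct change of variables; after that, every verification is routine, so I do not expect any real obstacle in the proof.
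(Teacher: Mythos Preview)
Your proof is correct, and the key substitution $x'_i=x_i-x_{\ell_1}$, $y'_j=y_j-y_{\ell_2}$, $z'_k=z_k+x_{\ell_1}+y_{\ell_2}$ is exactly the change of variables the paper uses. The difference is in packaging: the paper invokes the finite field method (Athanasiadis), reducing the identity $\chi_{\calA}(t)=t^{2}\chi_{\calH}(t)$ to the point-count equality
\[
\Bigl|\F_p^{\ell_1+\ell_2+\ell_3}\setminus\textstyle\bigcup_{H\in\calA}\overline{H}\Bigr|
= p^{2}\Bigl|\F_p^{\ell_1+\ell_2+\ell_3-2}\setminus\textstyle\bigcup_{H\in\calH}\overline{H}\Bigr|
\]
for large primes $p$, and then verifies this by summing over the free parameters $(x_{\ell_1},y_{\ell_2})\in\F_p^{2}$ after the same substitution. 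You instead observe directly that the $2$-dimensional kernel of $\Phi$ lies in every hyperplane of $\calA$, so $\calA$ is literally the pullback arrangement $\Phi^{-1}(\calH)$; this yields an order isomorphism $L(\calA)\cong L(\calH)$ shifting dimensions by $2$, and the characteristic-polynomial identity follows from the definition without appeal to the finite field method. Your route is slightly more self-contained; the paper's route has the advantage of not needing to check the poset isomorphism carefully (the point count does that work implicitly). Either way the content is the same linear-algebra observation.
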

\begin{proof}
By Theorem \ref{thm:Zas}, it is enough to show $\chi_{\calA_{\ell_1, \ell_2, \ell_3}}(t)=t^2\chi_{\calH_{A_{\ell_1, \ell_2, \ell_3}}}(t)$.
Using the finite field method, we reduce this to show the following equality for a large prime $p$.
\[\left|\F_p^{\ell_1+\ell_2+\ell_3}\setminus\bigcup_{H\in\calA_{\ell_1, \ell_2, \ell_3}}{\overline{H}}\right|=p^2\left|\F_p^{\ell_1+\ell_2+\ell_3-2}\setminus\bigcup_{H\in\calH_{A_{\ell_1, \ell_2, \ell_3}}}{\overline{H}}\right|.\]
Now, we have the following. 
\begin{align*}&\left|\F_p^{d}\setminus\bigcup_{H\in\calA_{\ell_1, \ell_2, \ell_3}}{\overline{H}}\right|=\sum_{(x_{\ell_1}, y_{\ell_2})\in\F_p^2}{\#\Set{(\bm{x}, \bm{y}, \bm{z}) \in \F_p^{d} \ | \ 
\begin{array}{l}
\bullet \ x_{i_1}\neq x_{i_2} \ (1\leq i_1< i_2\leq \ell_1)\\
\bullet \ y_{j_1}\neq y_{j_2} \ (1\leq j_1< j_2\leq \ell_2)\\
\bullet \ z_{k_1}\neq z_{k_2} \ (1\leq k_1< k_2\leq \ell_3)\\
\bullet \ x_i+y_j+z_k\neq0 \ \left(\begin{array}{l}1\leq i\leq \ell_1,\\ 1\leq j\leq \ell_2, \\ 1\leq k\leq \ell_3\end{array}\right)\end{array}}},\\
\end{align*}
where $d=\ell_1+\ell_2+\ell_3-2$, $\bm{x}:=(x_1, \ldots ,x_{\ell_1-1})$, $\bm{y}:=(y_1, \ldots ,y_{\ell_2-1})$, $\bm{z}:=(z_1, \ldots ,z_{\ell_3})$.
By changing of the coordinate as $x'_i=x_i-x_{\ell_1} \ (1\leq i\leq\ell_1-1)$, $y'_j=y_j-y_{\ell_2} \ (1\leq j\leq\ell_2-1)$, $z'_k=z_k+x_{\ell_1}+y_{\ell_2} \ (1\leq k\leq\ell_3)$, we can prove that the sum above is equal to the following. 
\begin{align*}
&\sum_{(x_{\ell_1}, y_{\ell_2})\in\F_p^2}{\#\Set{(\bm{x'}, \bm{y'}, \bm{z'}) \in \F_p^{d} \ | \ \begin{array}{l}
\bullet \ x'_i\neq0 \ (1\leq i\leq \ell_1-1)\\
\bullet \ y'_j\neq0 \ (1\leq j\leq \ell_2-1)\\
\bullet \ z'_k\neq0 \ (1\leq k\leq \ell_3)\\
\bullet \ x'_i+z'_k\neq0 \ (1\leq i\leq \ell_1-1, \ 1\leq k\leq \ell_3)\\
\bullet \ y'_j+z'_k\neq0 \ (1\leq j\leq \ell_2-1, \ 1\leq k\leq \ell_3)  \\
\bullet \ x'_i+y'_j+z'_k\neq0 \ \left(\begin{array}{l}1\leq i\leq \ell_1-1,\\ 1\leq j\leq \ell_2-1, \\ 1\leq k\leq \ell_3\end{array}\right)\\
\bullet \ x'_{i_1}\neq x'_{i_2} \ (1\leq i_1<i_2\leq \ell_1-1)\\
\bullet \ y'_{j_1}\neq y'_{j_2} \ (1\leq j_1<j_2\leq \ell_2-1)\\
\bullet \ z'_{k_1}\neq z'_{k_2} \ (1\leq k_1<k_2\leq \ell_3)\\
\end{array}}}\\
&=p^2\left|\F_p^{d}\setminus\bigcup_{H\in\calH_{A_{\ell_1, \ell_2, \ell_3}}}{\overline{H}}\right|.
\end{align*}
\end{proof}
In the case of $\ell_3=1 \ \text{or} \ 2$, the characteristic polynomial of $\calA_{\ell_1, \ell_2, \ell_3}$ is known as the following by Edelman and Reiner. 
\begin{theorem}{\rm (Edelman and Reiner \cite[Theorem 2.5(2)]{ER})}\\
\vspace{-8pt}
\begin{itemize}
\item[(1)]When $\ell_3=1$, 
\[\chi_{\calA_{\ell_1, \ell_2, 1}}(t)=t^2(t-1)(t-2)\cdots(t-(\ell_1+\ell_2-1)).\]
\[r(\calA_{\ell_1, \ell_2, 1})=(\ell_1+\ell_2)!.\]
\item[(2)]When $\ell_3=2$, 
\[\chi_{\calA_{\ell_1, \ell_2, 2}}(t)=t^2(t-1)\displaystyle\prod_{i=\ell_1+1}^{\ell_1+\ell_2}{(t-i)}\displaystyle\prod_{j=\ell_2+1}^{\ell_1+\ell_2-1}{(t-j)}.\]
\[r(\calA_{\ell_1, \ell_2, 2})=\fr<2\binom{\ell_1+\ell_2+1}{\ell_1}\binom{\ell_1+\ell_2+1}{\ell_2}\ell_1!\ell_2!/\ell_1+\ell_2+1>.\]
\end{itemize}
\end{theorem}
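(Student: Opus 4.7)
The plan is to apply the finite field method stated just above: for a sufficiently large prime $p$, $\chi_{\calA_{\ell_1,\ell_2,\ell_3}}(p) = |\F_p^{\ell_1+\ell_2+\ell_3} \setminus \bigcup_{H} \overline{H}|$, and since both sides are polynomials in $p$ that agree for infinitely many primes, the identity lifts to $\R[t]$. So it suffices to compute these point-counts and recognize the answer.

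For case (1), I first observe that the arrangement is invariant under the translation $x_i \mapsto x_i - z_1$, so summing over $z_1 \in \F_p$ simply contributes a factor of $p$ times the count when $z_1 = 0$. The remaining conditions on $(x_1,\dots,x_{\ell_1}, y_1,\dots,y_{\ell_2})$ are: the $x_i$'s are distinct, the $y_j$'s are distinct, and $x_i + y_j \neq 0$ for all $i,j$; equivalently, the concatenated tuple $(x_1,\ldots,x_{\ell_1},-y_1,\ldots,-y_{\ell_2})$ consists of $\ell_1+\ell_2$ pairwise distinct elements of $\F_p$. This count is the falling factorial $p(p-1)\cdots(p-\ell_1-\ell_2+1)$, and multiplying by $p$ gives the claimed polynomial.

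For case (2), the same translation in $z_1$ contributes $p$. Setting $w := z_2 - z_1 \in \F_p^\ast$, one notices the further symmetry $(x,y,w) \mapsto (\lambda x,\lambda y,\lambda w)$ for $\lambda \in \F_p^\ast$, which preserves every condition of the arrangement. Hence the count $N(w)$ of admissible $(x,y)$ is independent of $w \in \F_p^\ast$, giving another factor of $p-1$ and reducing us to computing $N(1)$. Writing $X:=\{x_1,\dots,x_{\ell_1}\}$ and $Y':=\{-y_1,\dots,-y_{\ell_2}\}$, the surviving conditions demand that $X$ avoid $Y' \cup (Y'-1)$. Grouping the sets $Y' \subset \Z/p$ of size $\ell_2$ by the number $k$ of cyclic runs of consecutive elements — of which there are $\tfrac{p}{k}\binom{\ell_2-1}{k-1}\binom{p-\ell_2-1}{k-1}$, with $|Y'\cup(Y'-1)| = \ell_2+k$ — one obtains
\[
N(1) = \ell_1!\,\ell_2! \sum_{k=1}^{\ell_2} \frac{p}{k}\binom{\ell_2-1}{k-1}\binom{p-\ell_2-1}{k-1}\binom{p-\ell_2-k}{\ell_1}.
\]
Multiplying by the prefactor $p(p-1)$ and simplifying should yield $p^2(p-1)\prod_{i=\ell_1+1}^{\ell_1+\ell_2}(p-i)\prod_{j=\ell_2+1}^{\ell_1+\ell_2-1}(p-j)$.

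The hard part is precisely the final combinatorial identity: one can establish it by induction on $\ell_2$ (or equivalently by recognizing both sides as polynomials in $p$ that agree on enough values by Chu--Vandermonde-type manipulations). A more conceptual alternative, which is the route actually taken in \cite{ER}, is to prove directly that $\calA_{\ell_1,\ell_2,\ell_3}$ (for $\ell_3 \leq 2$) is a supersolvable (in particular free) arrangement, exhibit a chain of modular coatoms in the intersection lattice whose induced factors give the predicted exponents, and then invoke Jambu--Terao factorization $\chi_\calA(t) = \prod (t - e_i)$. This sidesteps the combinatorial sum entirely and is the main obstacle to writing a fully self-contained proof of case (2).
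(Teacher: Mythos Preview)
The paper does not supply its own proof of this theorem: it is quoted directly from \cite{ER}, and the remark immediately following explains that Edelman and Reiner establish it by showing $\calA_{\ell_1,\ell_2,1}$ and $\calA_{\ell_1,\ell_2,2}$ are \emph{free} arrangements with explicit exponents, after which the factorization $\chi_{\calA}(t)=\prod_i(t-e_i)$ is automatic. So the comparison here is really between your finite-field approach and the freeness approach of \cite{ER}.

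For part (1) your argument is complete, correct, and genuinely more elementary than the freeness route: the translation $x_i\mapsto x_i+z_1$ peels off a factor of $p$, and the observation that the remaining constraints say precisely that $(x_1,\dots,x_{\ell_1},-y_1,\dots,-y_{\ell_2})$ are pairwise distinct gives the falling factorial immediately. This is a nice self-contained alternative.

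For part (2) your reduction is also correct --- the translation and the dilation symmetry you use do preserve all the defining conditions, and the run-count formula $\tfrac{p}{k}\binom{\ell_2-1}{k-1}\binom{p-\ell_2-1}{k-1}$ for cyclic subsets is standard --- but, as you yourself flag, the argument is not finished: the identity
\[
p(p-1)\,\ell_1!\,\ell_2!\sum_{k\ge1}\frac{p}{k}\binom{\ell_2-1}{k-1}\binom{p-\ell_2-1}{k-1}\binom{p-\ell_2-k}{\ell_1}
= p^2(p-1)\prod_{i=\ell_1+1}^{\ell_1+\ell_2}(p-i)\prod_{j=\ell_2+1}^{\ell_1+\ell_2-1}(p-j)
\]
is left as an assertion. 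It does check in small cases and can presumably be proved by a Vandermonde-type manipulation or induction on $\ell_2$, but until that is written down your proof of (2) has a real gap. Your own suggestion --- falling back on the supersolvability/freeness argument of \cite{ER} --- is exactly what the paper relies on, so at that point you are no longer offering an alternative proof but rather citing the same source.
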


\begin{remark}\hspace{2pt}\\
\begin{itemize}
\vspace{-5mm}
\item[(1)]Furthermore, in \cite[Theorem 2.5(2)]{ER}, they showed that $\calA_{\ell_1, \ell_2, 1}$ and $\calA_{\ell_1, \ell_2, 2}$ are {\it free} arrangements, and they also determined  their {\it exponents}. By the general theory of hyperplane arrangements, the characteristic polynomial of a free hyperplane arrangement $\calA\subset\R^d$ with exponents $\{e_1, \ldots ,e_d\}$ is
\[\chi_\calA(t)=\prod_{i=1}^d{(t-e_i)}.\]
\item[(2)]In \cite{ER}, they also showed if
$\ell_1, \ell_2, \ell_3\geq3$, then $\calA$ will not be a free arrangement. 
Actually, the characteristic polynomial of $\calA_{3,3,3}$ is the following, 
\[\chi_{A_{3,3,3}}(t)=t^2(t-1)(t-5)(t-7)(t^4-23t^3+200t^2-784t+1188).\]
\end{itemize}
\end{remark}
Using above results, we can compute the number of crepant resolutions of 4-dimensional affine hypertoric varieties $Y(A_{\ell_1, \ell_2, \ell_3}, 0)=\overline{\calO^{\tmin}}(\{\ell_1, \ell_2, \ell_3\})$ in the case of $\ell_3=1$ or 2.  
\begin{corollary}\hspace{2pt}\label{cor:counting_4dim}\\
\begin{itemize}
\vspace{-5mm}
\item[(1)]The number of crepant resolutions of $\overline{\calO^{\tmin}}(\{\ell_1, \ell_2, 1\})$ is 
\[\binom{\ell_1+\ell_2}{\ell_1}.\]
\item[(2)]The number of crepant resolutions of $\overline{\calO^{\tmin}}(\{\ell_1, \ell_2, 2\})$ is
\[\fr<\binom{\ell_1+\ell_2+1}{\ell_1}\binom{\ell_1+\ell_2+1}{\ell_2}/\ell_1+\ell_2+1>.\]
\end{itemize}
\end{corollary}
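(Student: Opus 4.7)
The plan is to apply Corollary \ref{cor:counting} directly, since it reduces the count to $r(\calH_A)/|W_B|$, and then plug in the Edelman--Reiner formulas.

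First, I would identify the Namikawa--Weyl group. By Theorem \ref{thm:4dimension} (Case (ii)), for $Y(A_{\ell_1,\ell_2,\ell_3},0) = \overline{\calO^{\tmin}}(\{\ell_1,\ell_2,\ell_3\})$ we have $W_B = \gerS_{\ell_1}\times\gerS_{\ell_2}\times\gerS_{\ell_3}$, so $|W_B| = \ell_1!\,\ell_2!\,\ell_3!$. In case (1), $\ell_3=1$, giving $|W_B|=\ell_1!\,\ell_2!$; in case (2), $\ell_3=2$, giving $|W_B|=2\,\ell_1!\,\ell_2!$.

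Next, by Corollary \ref{cor:counting}, the number of projective crepant resolutions is $r(\calH_{A_{\ell_1,\ell_2,\ell_3}})/|W_B|$. The preceding lemma identifies $r(\calH_{A_{\ell_1,\ell_2,\ell_3}}) = r(\calA_{\ell_1,\ell_2,\ell_3})$, so I would invoke the Edelman--Reiner theorem to read off
\[
r(\calA_{\ell_1,\ell_2,1}) = (\ell_1+\ell_2)!, \qquad r(\calA_{\ell_1,\ell_2,2}) = \frac{2\binom{\ell_1+\ell_2+1}{\ell_1}\binom{\ell_1+\ell_2+1}{\ell_2}\ell_1!\,\ell_2!}{\ell_1+\ell_2+1}.
\]

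Finally, dividing gives the two desired formulas: in case (1),
\[
\frac{(\ell_1+\ell_2)!}{\ell_1!\,\ell_2!} = \binom{\ell_1+\ell_2}{\ell_1},
\]
and in case (2),
\[
\frac{1}{2\,\ell_1!\,\ell_2!}\cdot\frac{2\binom{\ell_1+\ell_2+1}{\ell_1}\binom{\ell_1+\ell_2+1}{\ell_2}\ell_1!\,\ell_2!}{\ell_1+\ell_2+1} = \frac{\binom{\ell_1+\ell_2+1}{\ell_1}\binom{\ell_1+\ell_2+1}{\ell_2}}{\ell_1+\ell_2+1}.
\]
There is essentially no obstacle here since all the hard work is packaged in Corollary \ref{cor:counting}, the lemma relating $\calH_{A_{\ell_1,\ell_2,\ell_3}}$ to $\calA_{\ell_1,\ell_2,\ell_3}$, and the quoted Edelman--Reiner computation; the only mild point to check is the book-keeping that $|W_B|$ really is $\ell_1!\,\ell_2!\,\ell_3!$ in our normalization of $B$ (which is already established by Theorem \ref{thm:4dimension}).
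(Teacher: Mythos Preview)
Your proposal is correct and follows exactly the approach the paper intends: apply Corollary~\ref{cor:counting}, use the lemma identifying $r(\calH_{A_{\ell_1,\ell_2,\ell_3}})$ with $r(\calA_{\ell_1,\ell_2,\ell_3})$, plug in the Edelman--Reiner chamber counts, and divide by $|W_B|=\ell_1!\,\ell_2!\,\ell_3!$. The paper itself only states ``using above results'' before the corollary, so your write-up is in fact more explicit than the original.
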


On the other hand, we don't know the number of all crepant resolutions of $\overline{\calO^{\tmin}}(\{\ell_1, \ell_2, \ell_3\})$ in general case. 
\begin{question}
When $\ell_1, \ell_2, \ell_3\geq3$, compute the number of crepant resolutions of $\overline{\calO^{\tmin}}(\{\ell_1, \ell_2, \ell_3\})$. 
\end{question}


\end{document}